\DeclareMathSymbol{\Alpha}{\mathalpha}{operators}{"41}
\DeclareMathSymbol{\Beta}{\mathalpha}{operators}{"42}
\DeclareMathSymbol{\Epsilon}{\mathalpha}{operators}{"45}
\DeclareMathSymbol{\Zeta}{\mathalpha}{operators}{"5A}
\DeclareMathSymbol{\Eta}{\mathalpha}{operators}{"48}
\DeclareMathSymbol{\Iota}{\mathalpha}{operators}{"49}
\DeclareMathSymbol{\Kappa}{\mathalpha}{operators}{"4B}
\DeclareMathSymbol{\Mu}{\mathalpha}{operators}{"4D}
\DeclareMathSymbol{\Nu}{\mathalpha}{operators}{"4E}
\DeclareMathSymbol{\Omicron}{\mathalpha}{operators}{"4F}
\DeclareMathSymbol{\Rho}{\mathalpha}{operators}{"50}
\DeclareMathSymbol{\Tau}{\mathalpha}{operators}{"54}
\DeclareMathSymbol{\Chi}{\mathalpha}{operators}{"58}
\DeclareMathSymbol{\omicron}{\mathord}{letters}{"6F}
\newcommand\ZZ{{\hat{\mathbb Z}}}
\newcommand\Z{{\mathbb Z}}
\newcommand\PP{{\mathbb P}}
\newcommand\Q{{\mathbb Q}}
\newcommand\F{{\mathbb F}}
\newcommand\C{{\mathbb C}}
\newcommand\N{{\mathbb N}}
\newcommand\cL{{\mathscr L}}
\newcommand\hL{\hat{\mathscr L}}
\newcommand\cP{{\mathscr P}}
\newcommand\ccP{\check{\Pi}}
\newcommand\ra{\rightarrow}
\newcommand\ilim{\lim\limits_{\longleftarrow}\,}
\newcommand\Sp{\operatorname{Sp}}
\newcommand\aut{\operatorname{Aut}}
\newcommand\out{\operatorname{Out}}
\newcommand\inn{\operatorname{inn}}
\newcommand\Inn{\operatorname{Inn}}
\renewcommand\dim{\operatorname{dim}}
\newcommand\hookra{\hookrightarrow}
\newcommand\tura{\twoheadrightarrow}
\newcommand\da{\downarrow}
\newcommand\dd{\partial}
\newcommand{\Id}{\operatorname{Id}}
\newcommand{\sym}{\operatorname{Sym}}
\newcommand\sr{\stackrel}
\newcommand\st{\scriptstyle}
\newcommand\sst{\scriptscriptstyle}
\newcommand\cGG{\check{\GG}}
\newcommand\hGG{\widehat{\GG}}
\newcommand\UU{\Upsilon}
\newcommand\hUU{\widehat{\Upsilon}}
\newcommand\cUU{\check{\Upsilon}}
\newcommand\cK{\mathscr K}
\newcommand\hP{\widehat{\Pi}}
\newcommand\hp{\hat{\pi}}
\newcommand\tGG{\tilde{\GG}}
\newcommand\ssm{\smallsetminus}
\newcommand\ol{\overline}
\newcommand\cM{{\mathcal M}}
\newcommand\cH{{\cal H}}
\newcommand\cC{{\mathscr C}}
\newcommand\cF{{\mathscr F}}
\newcommand\cS{{\mathscr S}}
\newcommand\cU{{\mathscr U}}
\newcommand\GG{\Gamma}
\newcommand\ld{\lambda}
\newcommand\Ld{\Lambda}
\newcommand\wh{\widehat}
\newcommand\td{\tilde}
\newcommand\sg{\sigma}
\newcommand\Sg{\Sigma}
\newcommand\gm{\gamma}
\def\co{\colon\thinspace}
\newtheorem{theorem}{Theorem}[section]
\newtheorem{corollary}[theorem]{Corollary}
\newtheorem{proposition}[theorem]{Proposition}
\newtheorem{lemma}[theorem]{Lemma}
\newtheorem{question}[theorem]{Question}
\theoremstyle{definition}
\newtheorem{definition}[theorem]{Definition}   
\newtheorem{remark}[theorem]{Remark}  
\begin{document}

\title{Congruence topologies on the mapping class group}
\author{Marco Boggi}\maketitle

\begin{abstract}Let $\Gamma(S)$ be the pure mapping class group of a connected orientable surface $S$ of negative Euler characteristic.
For ${\mathscr C}$ a class of finite groups, let $\hat{\pi}_1(S)^{\mathscr C}$ be the pro-${\mathscr C}$ completion of the fundamental group of $S$.
The \emph{${\mathscr C}$-congruence completion $\check{\Gamma}(S)^{\mathscr C}$ of $\Gamma(S)$} is the profinite completion induced by the 
embedding $\Gamma(S)\hookrightarrow{\operatorname{Out}}(\hat{\pi}_1(S)^{\mathscr C})$. In this paper, we begin a systematic study of 
such completions for different ${\mathscr C}$. We show that the combinatorial structure of the profinite groups $\check{\Gamma}(S)^{\mathscr C}$ 
closely resemble that of $\Gamma(S)$. 
A fundamental question is how ${\mathscr C}$-congruence completions compare with pro-${\mathscr C}$ completions. Even though,
in general (e.g.\ for ${\mathscr C}$ the class of finite solvable groups), $\check{\Gamma}(S)^{\mathscr C}$ is not even virtually a pro-${\mathscr C}$ group,
we show that, for ${\mathbb Z}/2\in{\mathscr C}$, $g(S)\leq 2$ and $S$ open, there is a natural epimorphism from the ${\mathscr C}$-congruence 
completion $\check{\Gamma}(S)(2)^{\mathscr C}$ of the abelian level of order $2$ to its pro-${\mathscr C}$ completion 
$\widehat{\Gamma}(S)(2)^{\mathscr C}$. In particular, this is an isomorphism for the class of finite groups and for 
the class of $2$-groups. Moreover, in these two cases, the result also holds for a closed surface.
\newline

\noindent
{\bf MSC2010:} 30F60, 14H10, 14H15, 32G15, 11F80, 14H30, 14F35.
\end{abstract}

\section{Introduction}\label{intro}
Let $S$ be a connected surface of negative Euler characteristic. We denote by $\GG(S)$ its \emph{pure} 
mapping class group, that it is to say the group of isotopy classes of orientation preserving homeomorphism (or diffeomorphism) of $S$ 
in itself which do not permute punctures or boundary components of $S$.

For $S_g$ a closed orientable Riemann surface of genus $g$ and $\{P_1,\ldots, P_{n}\}$
a set of distinct points on $S_g$ such that $2g-2+n>0$, we then let $S_{g,n}:=S_g\ssm\{P_1,\ldots, P_{n}\}$ and denote 
$\GG(S_{g,n})$ by $\GG_{g,n}$. Let $\Pi_{g,n}:=\pi_1(S_{g,n},P_{n+1})$ be the fundamental group of $S_{g,n}$ based at $P_{n+1}$. 
There is a faithful representation:
\[\rho_{g,n}\co \GG_{g,n}\hookra\out(\Pi_{g,n}).\]

A \emph{level} of $\GG_{g,n}$ is just a finite index subgroup $H<\GG_{g,n}$.
A characteristic finite index subgroup $K$ of $\Pi_{g,n}$ determines the {\it geometric level}
$\GG_K$, defined to be the kernel of the induced representation:
\[\rho_K\co \GG_{g,n}\to\out(\Pi_{g,n}/K).\]
The abelian levels $\GG(m)$ of order $m$, for $m\geq 2$, are a particular case and are defined by 
the kernel of the natural representation:
\[\rho_{(m)}\co \GG_{g,n}\to\Sp(H_1(S_{g},\Z/m)).\]

\begin{definition}\emph{A class of finite groups} is a full 
subcategory $\cC$ of the category of finite groups which is closed under taking subgroups, 
homomorphic images and extensions (meaning that a short exact sequence of finite groups is 
in $\cC$ whenever its exterior terms are). We always assume that $\cC$ contains a nontrivial group. We denote by $\cF$ the class of all finite groups, 
by $\cS$ the class of finite solvable groups and by $(p)$, for $p>1$ a prime number, the class of $p$-groups.
\end{definition}

We say that a subgroup $H$ of a group $G$ is \emph{$\cC$-open} if it contains a normal subgroup $N$ of $G$ 
such that the quotient group $G/N$ belongs to $\cC$. In this case, we write $H\leq_\cC G$ and $N\lhd_\cC G$, 
respectively. For a finitely generated group $G$, a fundamental system of neighborhoods of the identity
for the \emph{pro-$\cC$ topology} is given by any cofinal system of $\cC$-open characteristic subgroups. 
The \emph{pro-$\cC$ completion} $\wh{G}^\cC$ is the completion of the group $G$ with respect to its pro-$\cC$ topology. We denote $\wh{G}^\cF$
simply by $\wh{G}$.

The pro-$\cC$ topology on the fundamental group $\Pi_{g,n}$ determines a profinite topology on the mapping class
group $\GG_{g,n}$ by associating to the set of characteristic $\cC$-open subgroups of $\Pi_{g,n}$ the levels 
$\{\GG_K\}_{K\lhd_\cC\Pi_{g,n}}$, which form a fundamental system of neighborhoods of the identity for a profinite topology
on the mapping class group $\GG_{g,n}$. 

We say that a subgroup of $\GG_{g,n}$ is \emph{$\cC$-congruence open} if it contains some level $\GG_K$ for 
$K\lhd_\cC\Pi_{g,n}$. We call the topology thus defined the \emph{$\cC$-congruence topology on} $\GG_{g,n}$ and
denote by $\cGG_{g,n}^\cC$ the corresponding completion. Also, for any subgroup $H$ of $\GG_{g,n}$, the \emph{$\cC$-congruence 
topology} on $H$ is the profinite topology whose fundamental system of neighborhoods of the identity is the set of normal finite index subgroups 
$\{H\cap\GG_K\}_{K\lhd_\cC\Pi_{g,n}}$. The \emph{$\cC$-congruence completion} $\check{H}^\cC$ of $H$ is the associated profinite completion.

The classical congruence subgroup problem asks whether the class of all finite groups of $\Pi_{g,n}$ induces on $\GG_{g,n}$
the full profinite topology, or, equivalently, if the set of geometric levels is cofinal in the set of finite index subgroups of $\GG_{g,n}$, 
ordered by inclusion. This problem is settled, affirmatively, only for $g\leq 2$ (cf.\ \cite{hyp} and (ii) of Theorem~\ref{main}).

Clearly, it is an interesting problem to understand the topology of $\cGG_{g,n}^\cC$ for more general classes of finite groups.  
By Lemma~4.5.5 in \cite{RZ}, the $p$-congruence completion $\cGG_{g,n}^{(p)}$ of $\GG_{g,n}$ is virtually a pro-$p$ group,
that is to say, it contains an open subgroup which is pro-$p$. So it makes sense to ask whether $\cGG_{g,n}^{(p)}$ is virtually the pro-$p$ completion
of $\GG_{g,n}$. However, in general, it is not true that the profinite group $\cGG_{g,n}^\cC$ contains an open subgroup which is a pro-$\cC$ group.
In fact, in Proposition~\ref{novirtual}, we will show that, for $\cC=\cS$, the class of finite solvable groups, $\cGG_{g,n}^\cS$ is not virtually pro-$\cS$. 
This motivates the following definition:

\begin{definition}\label{weak_strong}Let $\cC$ be a class of finite groups:
\begin{enumerate}
\item The mapping class group $\GG_{g,n}$ satisfies the \emph{$\cC$-congruence subgroup property} if $\cGG_{g,n}^\cC$ is virtually the
pro-$\cC$ completion of $\GG_{g,n}$, that is to say, for some $\cC$-congruence open subgroup $U$ of $\GG_{g,n}$, denoting by $\check{U}^\cC$ 
its $\cC$-congruence completion, we have $\check{U}^\cC\equiv\hat{U}^\cC$.
\item The mapping class group $\GG_{g,n}$ satisfies the \emph{weak $\cC$-congruence subgroup property} if there is a $\cC$-congruence open
subgroup $U$ of $\GG_{g,n}$ which is residually a $\cC$-group and such that the $\cC$-congruence topology induces on $U$ a topology which is 
finer than the pro-$\cC$ topology. Equivalently:  there is a natural epimorphism $\check{U}^\cC\tura\hat{U}^\cC$.
\end{enumerate}\end{definition}

\begin{remark}In Definition~\ref{weak_strong}, (i) implies (ii). If $\cGG_{g,n}^\cC$ is virtually pro-$\cC$, (i) and (ii) are equivalent. In (ii), we ask that $U$ is
residually a $\cC$-group in order to avoid that the property may hold trivially. For instance, if we let $U=\GG_{g,n}$, then the pro-$\cS$ completion of $U$ 
is trivial for $g\geq 3$ (cf.\ Theorem~5.2 in \cite{FM}). Instead, for a prime $p\geq 0$, it is well known (cf.\ Section~\ref{standard}) that the abelian level 
$\GG(p)$ of $\GG_{g,n}$ is residually a $p$-group and so residually a $\cC$-group for any class of finite groups $\cC$ such that $\Z/p\in\cC$.
\end{remark}

In Corollary~\ref{congruence_0}, we will show that the weak $\cC$-congruence subgroup property is indeed satisfied in genus $0$
for any class of finite groups $\cC$. The next cases to consider then are genus $1$ and $2$, which are both subsumed under the notion
of \emph{hyperelliptic mapping class group}, which we define next.

For $g\geq 2$, let us fix a hyperelliptic involution $\iota$ in the mapping class group $\GG_g$. The hyperelliptic mapping class group
$\UU_g$ is the centralizer of $\iota$ in $\GG_g$. For $n>0$, we then define the hyperelliptic mapping class group $\UU_{g,n}$
to be the inverse image of $\UU_g$ via the natural epimorphism $\GG_{g,n}\to\GG_g$. Thus, for $g=2$, there holds $\UU_{2,n}=\GG_{2,n}$. 
For $g=1$ and $n\geq 1$, we let instead $\UU_{1,n}:=\GG_{1,n}$. From the definition, it follows that there is a natural faithful representation:
\[\rho_{g,n}\co \UU_{g,n}\hookra\out(\Pi_{g,n}).\]

The definition of the hyperelliptic mapping class group can be justified geometrically as follows.
Let $\cM_{g,n}$, for $2g-2+n>0$, be the moduli stack of $n$-pointed, genus $g$, smooth algebraic 
complex curves. It is a smooth connected Deligne-Mumford stack (briefly {\it D-M stack}) over $\C$ of
dimension $3g-3+n$, whose associated underlying complex 
analytic and topological {\'e}tale groupoids, we both denote by $\cM_{g,n}$ as well.

For stacks of the kind of $\cM_{g,n}$, the fundamental group can be described in a 
simple way. In fact, $\cM_{g,n}$ has a universal cover $T_{g,n}$ in the category of analytic
manifolds. The fundamental group $\pi_1(\cM_{g,n},[C])$ is then identified with the deck 
transformation group of the universal cover $T_{g,n}\to\cM_{g,n}$.

The choice of a lift of a point $[C]\in\cM_{g,n}$ to $T_{g,n}$ and of a diffeomorphism 
$S_{g,n}\to C\ssm\{\mbox{marked points}\}$ identifies the mapping class group $\GG_{g,n}$ 
with $\pi_1(\cM_{g,n},[C])$. The representation:
\[\rho_{g,n}\co\pi_1(\cM_{g,n},[C])\to\out(\Pi_{g,n}),\]
induced by the identification of $\GG_{g,n}$ with $\pi_1(\cM_{g,n},[C])$,
is equivalent to the universal monodromy representation associated to the short exact sequence 
of topological fundamental groups, determined by the $n$-punctured, genus $g$, universal curve ${\mathcal C}_{g,n}\to\cM_{g,n}$.

Let us assume that the point $[C]\in\cM_{g,n}$ parameterizes a hyperelliptic curve $C$ carrying the hyperelliptic involution $\iota$.
The hyperelliptic mapping class group $\UU_{g,n}$ then identifies with the fundamental group based at $[C]$ of the substack
$\cH_{g,n}$ of $\cM_{g,n}$ which parameterizes hyperelliptic curves. 
Observe that, in genus $1$ and $2$, all smooth projective curves are hyperelliptic, i.e.\ admit a finite morphism of degree $2$ onto $\PP^1$. 

As above, we define a \emph{level} of $\UU_{g,n}$ as a finite index subgroup $H<\UU_{g,n}$.
A characteristic finite index subgroup $K$ of $\Pi_{g,n}$ determines the \emph{geometric level}
$\UU_K:=\UU_{g,n}\cap\GG_K$, also defined as the kernel of the induced representation:
\[\rho_K\co \UU_{g,n}\to\out(\Pi_{g,n}/K).\]
In this way, we also get the \emph{abelian level} $\UU(m):=\UU_{g,n}\cap\GG(m)$ of order $m$, for $m\geq 2$.

The pro-$\cC$ topology on the fundamental group $\Pi_{g,n}$ determines the \emph{$\cC$-congruence topology on $\UU_{g,n}$} with fundamental 
system of neighborhoods of the identity $\{\UU_K\}_{K\lhd_\cC\Pi_{g,n}}$. A subgroup of $\UU_{g,n}$ is 
\emph{$\cC$-congruence open} if it contains some level $\UU_K$ for $K\lhd_\cC\Pi_{g,n}$. Let $\cUU_{g,n}^\cC$ be the corresponding
profinite completion of $\UU_{g,n}$ which we call the \emph{$\cC$-congruence completion of $\UU_{g,n}$} or also the \emph{pro-$\cC$ 
congruence hyperelliptic mapping class group}. 

The kernel of the natural epimorphism $\GG_{g,n+1}\to\GG_{g,n}$, induced filling in the last puncture, identifies with the fundamental group $\Pi_{g,n}$. 
The \emph{$\cC$-congruence completion} $\ccP_{g,n}^\cC$ of $\Pi_{g,n}$ is then the
completion with respect to the profinite topology induced on $\Pi_{g,n}$ by the $\cC$-congruence topology of $\GG_{g,n+1}$.
We will see (cf.\ Proposition~\ref{epi}) that there is a natural epimorphism $\ccP_{g,n}^\cC\to\hP_{g,n}^\cC$, which is an isomorphism for $\cC=\cF$ 
or $(p)$. In general, however, this map is not an isomorphism (e.g.\ for $\cC=\cS$) and it is a difficult and subtle question to determine when this happens. 
In Section~\ref{congruence}, we will prove the following theorem:

\begin{theorem}\label{main}\begin{enumerate}Let $\cC$ be a class of finite groups such that $\Z/2\in\cC$ and assume that $2g-2+n>0$ and $g\geq 1$:
\item For $n\geq 1$, the weak $\cC$-congruence property holds for $\UU_{g,n}$.
More precisely, there is a natural epimorphism of profinite groups $\cUU(2)^\cC\tura\hUU(2)^\cC$.
\item If $\cC$ is such that $\ccP_{g,k}^\cC\equiv\hP_{g,k}^\cC$, for all $k\geq 0$ (e.g.\ $\cC=\cF$ or $(2)$),
then the profinite group $\cUU_{g,n}^\cC$ is virtually the pro-$\cC$ completion of $\UU_{g,n}$. More precisely, there is
a natural isomorphism $\cUU(2)^\cC\equiv\hUU(2)^\cC$.
\end{enumerate}
\end{theorem}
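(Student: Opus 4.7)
The plan is to reduce the hyperelliptic case to the genus~$0$ case (Corollary~\ref{congruence_0}) via the double cover $S_g\to S_g/\iota\cong S^2$, and then to climb up in the number of punctures $n$ via the Birman exact sequence. Note first that $\iota$ acts as $-1$ on $H_1(S_g,\Z)$, hence trivially on $H_1(S_g,\Z/2)$, so $\iota\in\UU(2)$. Being central in $\UU_{g,n}$ by the very definition of the hyperelliptic mapping class group as a centralizer, $\iota$ generates a central subgroup $\langle\iota\rangle\cong\Z/2$ of $\UU(2)$. The quotient $\UU(2)/\langle\iota\rangle$ is commensurable with a level-$2$ subgroup of the mapping class group of the $(2g+2+n)$-punctured sphere (the $2g+2$ Weierstrass points together with the $n$ punctures of $S_{g,n}$), for which the weak $\cC$-congruence subgroup property holds by Corollary~\ref{congruence_0}. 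Since $\langle\iota\rangle\in\cC$, this property lifts to $\UU(2)$ and supplies the base of an induction on $n$.

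For the inductive step, restrict the Birman sequence $1\to\Pi_{g,n}\to\GG_{g,n+1}\to\GG_{g,n}\to 1$ to the hyperelliptic level-$2$ subgroups. Since point-pushes have trivial image in $\GG_g$ and so act trivially on $H_1(S_g,\Z)$, the Birman kernel $\Pi_{g,n}$ lies entirely inside $\UU_{g,n+1}(2)$, yielding a short exact sequence
$$1\to\Pi_{g,n}\to\UU_{g,n+1}(2)\to\UU_{g,n}(2)\to 1.$$
Taking $\cC$-congruence completions and pro-$\cC$ completions term by term produces a commutative ladder with exact rows; the left column is the epimorphism $\ccP_{g,n}^\cC\tura\hP_{g,n}^\cC$ of Proposition~\ref{epi}, while the right column is surjective by the inductive hypothesis. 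The five lemma then yields the epimorphism $\cUU(2)^\cC\tura\hUU(2)^\cC$, proving~(i). Under the stronger hypothesis of~(ii), the left column is an isomorphism (by assumption) and the right column is as well (by induction), so the five lemma promotes the middle map to the required isomorphism.

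The main obstacle is establishing the exactness of the two rows of the ladder after completion. This requires showing that the $\cC$-congruence topology of $\UU_{g,n+1}(2)$ restricts on the Birman kernel $\Pi_{g,n}$ to the $\cC$-congruence topology used to define $\ccP_{g,n}^\cC$, and symmetrically that the pro-$\cC$ topology of $\UU_{g,n+1}(2)$ restricts to the pro-$\cC$ topology of $\Pi_{g,n}$. The first statement is essentially formal from the definitions, because the basic neighborhoods of the identity on both sides are indexed by characteristic $\cC$-open subgroups of $\Pi_{g,n+1}$. The second statement is a form of ``goodness'' (in the sense of Serre) for $\UU(2)$ along the fiber $\Pi_{g,n}$: it requires producing, for each characteristic $\cC$-open $K\lhd\Pi_{g,n}$, a $\cC$-open subgroup of $\UU_{g,n+1}(2)$ whose intersection with $\Pi_{g,n}$ lies in $K$. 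Once these topology-matching statements are secured, together with residual $\cC$-finiteness of $\UU(2)$ (which uses $\Z/2\in\cC$), the remainder is formal diagram-chasing.
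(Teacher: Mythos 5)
Your overall strategy --- quotient by the hyperelliptic involution to reach genus $0$, then induct on $n$ via the Birman sequence --- is indeed the skeleton of the paper's argument, but both pillars of your proof have genuine gaps. First, the base case: for $n\geq 1$ the quotient $\UU_{g,n}(2)/\langle\iota\rangle$ is \emph{not} commensurable with a mapping class group of a $(2g+2+n)$-punctured sphere. The $n$ marked points are not Weierstrass points, so $\iota$ does not preserve them; in fact $\iota$ is central in $\UU_g$ but is not even a well-defined (let alone central) element of $\UU_{g,n}$ for $n\geq 1$. The isomorphism $\UU_g(2)/\langle\iota\rangle\cong\GG_{0,2g+2}$ holds only for $n=0$ (Proposition~\ref{ordered}). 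Moreover, even at $n=0$, ``the property lifts since $\langle\iota\rangle\in\cC$'' hides the real content: the congruence topology on $\UU_g(2)$ is defined via $\out(\hP_g^\cC)$ while that on $\GG_{0,2g+2}$ is defined via $\out(\hP_{0,2g+2}^\cC)$, and identifying the two requires comparing monodromy kernels across the degree-$2$ covering $C_0\to R$; this is Lemma~\ref{weak}, which rests on a rigidity statement for automorphisms of free pro-$\cC$ groups restricting to the identity on an open normal subgroup (Lemma~\ref{asada}) plus an $H_1(C_0,\Z/4)$ argument.

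Second, your induction cannot start at $n=0$ and climb up: the first step would need exactness of $1\to\ccP_{g,0}^\cC\to\cUU_{g,1}(2)^\cC\to\cUU_{g,0}(2)^\cC\to 1$, whose kernel is the congruence completion of a \emph{closed} surface group, and the congruence Birman sequence~(\ref{Birman4}) is only established for open-surface kernels (Corollary~\ref{comparison2}). The paper instead proves the case $n=2g+2$ directly (Lemma~\ref{final step}), exploiting the $2g+2$ Weierstrass sections to split $\UU_{g,2g+2}(2)\cong\Pi_g(2g+2)\rtimes\UU_g(2)$, and then runs the Birman induction both upward and downward from there. The hard point --- entirely absent from your proposal --- is that this splitting survives $\cC$-congruence completion, which requires showing that $\iota$ is self-centralizing in $\ccP_{g,n}^\cC\cdot\langle\iota\rangle$ (Lemmas~\ref{orbifold} and~\ref{chevalley}, via the representation theory of $H^1(S_N,\C)$ as a module over the deck group). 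Finally, your five-lemma ladder presupposes the existence of the middle vertical arrow $\cUU_{g,n+1}(2)^\cC\to\hUU_{g,n+1}(2)^\cC$, which is precisely what is to be proved, and its bottom row would need left exactness of the pro-$\cC$ completion of the Birman sequence --- exactly the failure (Remark~\ref{obstruction}) that forces item~(i) to assert only an epimorphism. The paper avoids both issues by placing the full profinite completion (where exactness is known) on top of the ladder and deducing the inclusion of kernels $\ker\hat{\varrho}_{g,n}^\cC\subseteq\cK^\cC_{g,n}$.
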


A few comments are in order. As a particular case of (ii) of Theorem~\ref{main}, we obtain the subgroup congruence property for hyperelliptic
mapping class groups in the closed surface case which was left open in \cite{hyp}. 

For $\cC$ the class of $2$-groups and $g=1$, (ii) of Theorem~\ref{main} has already been proved in \cite{H-I} 
(for $\cC=\cF$ and $g=1$, this is instead a classical result by Asada \cite{Asada}). In the same paper, the authors prove that the pro-$p$ congruence 
subgroup property fails for $\GG_{1,1}$ if $p\geq 11$. We expect that the same is true in genus $>1$, so that the hypothesis $\Z/2\in\cC$
is probably not just a consequence of the proof. We instead believe that the hypothesis $n\geq 1$ in the first item of the theorem is 
a consequence of the proof. 

We ignore whether the standard version of the subgroup congruence property holds for classes $\cC$ of finite groups other than
$\cF$ or $(2)$. Ultimately, this issue is related to the fact that the pro-$\cC$ completion functor is right exact but, in general, not 
left exact (cf.\ Remark~\ref{obstruction}).

Even though the (weak) $\cC$-congruence property for mapping class groups was the original motivation behind this paper, at a more technical level, its 
core lies in the study of the combinatorial properties of pro-$\cC$ congruence mapping class groups developed in Section~\ref{modular section}
and~\ref{c&c}. The upshot is a description of centralizers of multitwist in the pro-$\cC$ congruence mapping class groups.
This then yields a description of their centers and the Birman exact sequences~(\ref{Birman2}) and~(\ref{Birman4}) which are central for the proof of 
Theorem~\ref{main}. These generalize results originally obtained by Matsumoto, Hoshi and Mochizuki (cf.\ Remark~\ref{Hoshi}). 

In Section~\ref{orbits}, we prove a generalized surface group version of a conjecture of Gelander and Lubotzky, proved for free groups of finite rank 
in \cite{PP}. Even though these results are essentially independent from the rest of the paper,
they establish interesting connections with Grothendieck-Teichm\"uller theory and low-dimensional topology.
\medskip

\noindent
{\bf Acnowledgements.} I thank Eduard Looijenga and Pavel Zalesskii for some useful conversations and suggestions.

\section{$\cC$-congruence topologies on mapping class groups}\label{geometric pro}
\emph{A $\cC$-congruence topology} on the mapping class group $\GG(S,x_1,\ldots,x_n)$, for $S$ a connected oriented surface of finite type and
a given a set $\{x_1,\ldots,x_n\}$ of distinct points on $S$, is the topology that it acquires as an (outer) automorphism group of some fundamental 
group, which we can associate to the datum of the pointed surface $(S,x_1,\ldots,x_n)$, endowed with its $\cC$-topology. As we will see below, there 
are several natural choices which lead to a priori distinct $\cC$-congruence topologies. However, as we will see in Sections~\ref{shortexact} 
and~\ref{compconf}, for some of the most interesting cases (e.g.\ for $\cC=\cF$ or $(p)$) all of them determine the same topology on 
the mapping class group.

\subsection{The standard $\cC$-congruence topologies}\label{standard}
For $x\in S$ a base point, let $\Pi:=\pi_1(S,x)$, $S_x:=S\ssm\{ x\}$ and $\Pi_x:=\pi_1(S_x,y)$, for $y\in S_x$. 
Then,  $\aut(\hP^{\cC})$ and $\out(\hP^{\cC})$ are profinite groups 
(cf.\ Theorem~1.1 in \cite{NS}) and the natural representations $\GG(S)\to \out(\Pi^{\cC})$ and $\GG(S_x)\cong \GG (S,x)\to \aut(\Pi^{\cC})$ 
are faithful. In fact, since $\cC$ contains $(p)$ for some prime $p$, it suffices to prove the claim for $\cC=(p)$, which, in this case,
follows from the $p$-separability and conjugacy $p$-separability of surface groups (cf.\ \cite{Paris} or also Theorem~A.1 and Theorem~4.3 in \cite{scc}).

\begin{definition}\label{S-congruence} 
The \emph{$\cC$-congruence topology} of $\GG (S)$ (resp.\  $\GG (S,x)$) is the topology inherited from the embedding $\GG (S)\hookra \out(\hP^{\cC})$ 
(resp.\  $\GG (S,x)\hookra \aut(\hP^{\cC})$) and its  \emph{$\cC$-congruence completion}, denoted $\cGG(S)^{\cC}$ (resp.\ $\cGG(S,x)^{\cC}$),
is the associated completion or, equivalently, the closure of the image of this embedding. We also refer to these groups as to the 
\emph{pro-$\cC$-congruence mapping class groups $\cGG(S)^{\cC}$ and $\cGG(S,x)^{\cC}$}.
\end{definition}

A basic and very subtle question about the $\cC$-congruence topologies introduced above is whether the natural isomorphism of mapping class groups
$\GG(S_x)\cong\GG(S,x)$ induces an isomorphism $\cGG(S_x)^\cC\cong\cGG(S,x)^\cC$ of the respective $\cC$-congruence completions.
A related issue, which will turn out to be actually equivalent, is whether the two $\cC$-congruence topologies induce the same topology on
the fundamental group $\Pi$ considered as a subgroup of $\GG(S_x)$ and $\GG(S,x)$ respectively. 

Since the center of a pro-$\cC$ surface group is trivial (cf.\ Lemma~\ref{central}), the natural homomorphism $\inn\co\Pi\to\GG(S,x)\subset\aut(\hP^\cC)$
induces on $\Pi$ the pro-$\cC$ topology and the natural short exact sequence $1\to\hP^{\cC}\to\aut(\hP^{\cC})\to\out(\hP^{\cC})\to 1$ determines a Birman
short exact sequence of profinite groups:
\begin{equation}\label{Birman1}
1\to\hP^{\cC}\to\cGG (S, x)^{\cC}\to\cGG(S)^{\cC}\to 1.
\end{equation}
However, it is not clear whether the series of homomorphisms $\Pi\to\GG(S_x)\subset\out(\hP_x^\cC)$ induces the pro-$\cC$ topology on $\Pi$.
In this case, an element $\gm\in\Pi$ whose free homotopy class contains a simple closed curve is sent to the bounding pair map on $S_x$ associated
to a tubular neighborhood of a simple representative of $\gm$. We call the topology induced by the associated embedding $\Pi\subset\out(\hP_x^\cC)$ 
the \emph{$\cC$-congruence topology} on $\Pi$ and denote by $\ccP^\cC$ the corresponding profinite completion. In Proposition~\ref{epi}, we will 
show that, for $\cC=\cF$ or $(p)$, we have $\ccP^\cC\equiv\hP^\cC$ and, in Corollary~\ref{faithful}, we will show that $\cGG(S_x)^\cC\cong\cGG(S,x)^\cC$,
if and only if, $\ccP^\cC\equiv\hP^\cC$.

\subsection{The tangential base point representation}\label{tangential}
From the natural isomorphism of mapping class groups $\GG(S_x)\cong\GG(S,x)$, we deduce 
that there is a (faithful) representation $\GG(S_x)\to\aut(\hP^\cC)$. However, it is not clear whether this representation factors through 
the natural embedding $\GG(S_x)\subset\cGG(S_x)^\cC$ and then a representation $\cGG(S_x)^\cC\to\aut(\hP^\cC)$ with image $\cGG(S,x)^\cC$. 
In this section, we will show that this is the case.

Let us fix a tangential base point at $x\in S$ for the surface $S_x=S\ssm\{ x\}$. To be precise, a base point  $x\in S$ and a ray in the 
tangent space $T_xS$.  Let $q\co S_\circ\to  S$ be the real oriented  blow-up of $S$ at $x$.  Then, 
$S_\circ$  is a surface with boundary whose interior is mapped  diffeomorphically onto $S_x$ and whose 
boundary $\partial S_\circ$ is the preimage of $x$. The points of $\partial S_\circ$ are in bijective 
correspondence with the rays in the tangent space $T_xS$ so that the given ray defines a base point  $x_\circ\in \partial S_\circ$.  
We put $\Pi_\circ:=\pi_1(S_\circ,x_\circ)$ so that $\GG(S_\circ)$ is a 
subgroup of  the outer automorphism group of $\Pi_\circ$ and we let $\cGG(S_\circ)^\cC$ to be the profinite completion induced
by the monomorphism $\GG (S_\circ)\hookra\out(\hP_\circ^\cC)$. The natural isomorphism $\GG(S_\circ)\cong\GG(S_x)$ then
induces an isomorphism of the respective $\cC$-congruence completions $\cGG(S_\circ)^\cC\cong\cGG(S_x)^\cC$.

Let $u\in\Pi_\circ$ be an element whose free isotopy class contains $\partial S_\circ$, let $\cU:=\{u\}^{\hP_\circ^\cC}$ be its conjugacy class in 
$\hP_\circ^\cC$, let then $\aut_u(\hP^\cC_\circ)$ and $\out_{\cU}(\hP^\cC_\circ)$ be the stabilizers of $u$ and $\cU$ in the groups 
$\aut(\hP^\cC_\circ)$ and $\out(\hP^\cC_\circ)$, respectively. Observe that $\inn u$ lies in the center of the group $\aut_u(\hP^\cC_\circ)$.

\begin{lemma}\label{tangential1}Let us denote by $\Inn(u^{\ZZ^\cC})$ the procyclic subgroup of $\aut(\hP^\cC_\circ)$ generated by $\inn u$. 
Then the natural homomorphism $\hat{\Theta}^\cC_u\co\left.\aut_u(\hP^\cC_\circ)\right/\Inn(u^{\ZZ^\cC})\to\out_{\cU}(\hP^\cC_\circ)$ is an 
isomorphism of profinite groups.
\end{lemma}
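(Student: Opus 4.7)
The plan is to realize $\hat{\Theta}^\cC_u$ as the map induced, on the stabilizer $\aut_u(\hP^\cC_\circ)$, by the natural projection $\pi\co\aut(\hP^\cC_\circ)\to\out(\hP^\cC_\circ)$. Since $S_\circ$ is a surface with non-empty boundary (the preimage of $x$) and negative Euler characteristic, $\Pi_\circ$ is a non-abelian free group of finite rank, so $\hP^\cC_\circ$ has trivial center by Lemma~\ref{central}. This yields the short exact sequence of profinite groups
$$1 \longrightarrow \hP^\cC_\circ \stackrel{\inn}{\longrightarrow} \aut(\hP^\cC_\circ) \stackrel{\pi}{\longrightarrow} \out(\hP^\cC_\circ) \longrightarrow 1.$$

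Next I would check that $\pi$ restricts to a surjection $\aut_u(\hP^\cC_\circ)\tura\out_\cU(\hP^\cC_\circ)$: for any $\bar{\phi}\in\out_\cU(\hP^\cC_\circ)$ and any lift $\phi\in\aut(\hP^\cC_\circ)$, by definition of $\out_\cU$ there exists $v\in\hP^\cC_\circ$ with $\phi(u)=vuv^{-1}$, and then $\inn(v)^{-1}\circ\phi\in\aut_u(\hP^\cC_\circ)$ still projects to $\bar{\phi}$. The kernel of $\pi|_{\aut_u(\hP^\cC_\circ)}$ is $\inn(C_{\hP^\cC_\circ}(u))$, the image of the centralizer of $u$. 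Because $u$ represents the simple closed curve $\partial S_\circ$ it is primitive in $\Pi_\circ$; combined with the standard facts that non-trivial elements in a free pro-$\cC$ group have procyclic centralizers and that primitivity is preserved upon pro-$\cC$ completion (using conjugacy $\cC$-separability of $\Pi_\circ$, available since $(p)\subset\cC$ for some prime $p$), this forces $C_{\hP^\cC_\circ}(u)=u^{\ZZ^\cC}$. Hence $\ker(\pi|_{\aut_u(\hP^\cC_\circ)})=\Inn(u^{\ZZ^\cC})$, and so $\hat{\Theta}^\cC_u$ is a well-defined abstract group isomorphism.

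Finally, to upgrade to an isomorphism of profinite groups, I would observe that $\aut_u(\hP^\cC_\circ)$ is the stabilizer of the point $u$ under the continuous action of the profinite group $\aut(\hP^\cC_\circ)$ on $\hP^\cC_\circ$, hence is closed and therefore profinite; the procyclic $\Inn(u^{\ZZ^\cC})$ is likewise closed in it, so the quotient is profinite. A continuous bijection of compact Hausdorff topological groups is automatically a topological isomorphism, which finishes the argument.

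The main obstacle I anticipate is the clean identification $C_{\hP^\cC_\circ}(u)=u^{\ZZ^\cC}$. Ruling out a proper pro-$\cC$ root of $u$ in $\hP^\cC_\circ$ requires both the geometric primitivity of the boundary class in $\Pi_\circ$ and structural control on centralizers and roots in free pro-$\cC$ groups; this is precisely where the standing hypothesis on $\cC$ enters, and it would likely be treated by reducing to the class $(p)\subset\cC$ and invoking the corresponding conjugacy $p$-separability and cyclicity of pro-$p$ centralizers for surface groups already used in Section~\ref{standard}.
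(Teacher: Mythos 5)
Your argument is correct and follows essentially the same route as the paper: both proofs come down to the triviality of the center of $\hP^\cC_\circ$ together with the identification $C_{\hP^\cC_\circ}(u)=u^{\ZZ^\cC}$, which is exactly the ``main obstacle'' you flag and which the paper disposes of by quoting Lemma~\ref{central} (Proposition~3.5 of \cite{BZ}), giving $N_{\hP^\cC_\circ}(u^{\ZZ^\cC})=u^{\ZZ^\cC}$ and hence the centralizer statement. The only cosmetic difference is that you compute the kernel of the restricted projection $\aut_u(\hP^\cC_\circ)\to\out_\cU(\hP^\cC_\circ)$ directly, whereas the paper phrases the same uniqueness-of-lifts fact as the construction of an explicit inverse $\hat{\Kappa}^\cC_u$.
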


\begin{proof}Let us construct an inverse $\hat{\Kappa}^\cC_u$ for the homomorphism $\hat{\Theta}^\cC_u$. 
For this, we need the following lemma from \cite{BZ} (cf.\ Proposition~3.5 ibid.):

\begin{lemma}\label{central}Let $\cC$ be an extension closed class of finite groups.
Let $\hP^\cC$ be the pro-$\cC$ completion of a non-abelian surface group $\Pi$. 
Let $x\neq 1$ be a primitive element of $\Pi$ considered as an element of $\hP^\cC$. 
Then, for all $n\in\Z\ssm\{0\}$, there holds:
\[N_{\hP^\cC}(x^{n\ZZ^\cC})=N_{\hP^\cC}(x^{\ZZ^\cC})=x^{\ZZ^\cC}.\]
\end{lemma}

From Lemma~\ref{central}, it follows 
that for $f\in\out_{\cU}(\hP^\cC_\circ)$, modulo $\Inn(u^{\ZZ^\cC})$, there is a unique lift $\td{f}\in\aut_u(\hP^\cC_\circ)$.
We then let $\Kappa_u(f):=[\td{f}]$, where $[\td{f}]$ denotes the coset of $\td{f}$ in the quotient $\left.\aut_u(\hP^\cC_\circ)\right/\Inn(u^{\ZZ^\cC})$.
It is clear that $\Kappa_u\circ\Theta_u=\Id$ and $\Theta_u\circ\Kappa_u=\Id$, which completes the proof of Lemma~\ref{tangential1}.
\end{proof}

The natural epimorphism $\hP^\cC_\circ\to\hP^\cC$ induces a natural continuous homomorphism 
\[\hat{\Omega}^\cC_u\co\left.\aut_u(\hP^\cC_\circ)\right/\Inn(u^{\ZZ^\cC})\to\aut(\hP^\cC).\]
Let us then observe that, even though the isomorphism $\hat{\Kappa}^\cC_u$ and the homomorphism $\hat{\Omega}^\cC_u$ depend on the choice of 
$u\in\cU$, their composition $\hat{\Omega}^\cC_u\circ\hat{\Kappa}^\cC_u$ only depends on the conjugacy class $\cU$, which is determined by the 
point $x\in S$. Therefore, restricting $\hat{\Omega}^\cC_u\circ\hat{\Kappa}^\cC_u$ to $\cGG(S_x)^\cC\equiv\cGG(S_\circ)^\cC$, we obtain a natural
continuous epimorphism
\[\hat{\Delta}^\cC_x\co\cGG(S_x)^\cC\tura \cGG (S,x)^\cC,\] 
which only depends on the point $x\in S$. Hence, we obtain a natural representation $\cGG(S_x)^\cC\to\aut(\hP^\cC)$ which extends the
representation $\GG(S_x)\hookra\aut(\hP^\cC)$ considered at the beginning of this section. 

The kernel of $\hat{\Delta}^\cC_x$ clearly measures the difference between the $\cC$-congruence topologies on the mapping class group 
$\GG(S_x)$, associated to the representations $\GG(S_x)\hookra\out(\hp_1(S_x)^\cC)$ and $\GG(S_x)\cong\GG(S,x)\hookra\aut(\hP^\cC)$.

The epimorphism $\hat{\Delta}^\cC_x$ can be recovered from the inner action of $\cGG(S_x)^\cC$ on the $\cC$-congruence
completion $\check{\Pi}^\cC$ of $\Pi$. 
In Section~\ref{shortexact}, we will actually show that $\ker\hat{\Delta}^\cC_x=\{1\}$, if and only if, $\check{\Pi}^\cC\equiv\hP^\cC$.
For the moment, let us observe the following:

\begin{proposition}\label{epi}Let $\check{\Pi}^\cC$ be the $\cC$-congruence completion of $\Pi$ defined in Section~\ref{standard}:
\begin{enumerate}
\item There is a natural epimorphism $\check{\Pi}^\cC\tura\hP^\cC$.
\item For $\cC=\cF$ or $(p)$, we have $\check{\Pi}^\cC\equiv\hP^\cC$.  
\end{enumerate}
\end{proposition}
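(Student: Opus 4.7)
The plan is to exploit the continuous epimorphism $\hat{\Delta}_x^\cC\co\cGG(S_x)^\cC\tura\cGG(S,x)^\cC$ constructed in Section~\ref{tangential}. For (i), I restrict $\hat{\Delta}_x^\cC$ to the closure $\check{\Pi}^\cC$ of $\Pi$ inside $\cGG(S_x)^\cC$: by construction of $\hat{\Delta}_x^\cC$ and the identification of the Birman kernel $\Pi\subset\GG(S,x)\subset\aut(\hP^\cC)$ with $\inn(\Pi)$, the restriction sends $\gamma\in\Pi$ to $\inn_\gamma$; by the Birman sequence~\eqref{Birman1}, the closure of $\inn(\Pi)$ in $\cGG(S,x)^\cC$ is exactly $\hP^\cC$, so $\hat{\Delta}_x^\cC$ restricts to a continuous homomorphism $\check{\Pi}^\cC\to\hP^\cC$, surjective because $\check{\Pi}^\cC$ is compact and its image contains the dense subgroup $\Pi$.

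For (ii), in view of (i), it suffices to show that for every characteristic $K\lhd_\cC\Pi_x$ the finite quotient $\Pi/(\Pi\cap\GG_K)$ lies in $\cC$: this forces the pro-$\cC$ topology on $\Pi$ to refine the $\cC$-congruence one, yielding the reverse inclusion. Write $\beta\in\Pi_x$ for the loop around $x$, and set $L:=\pi(K)\leq\Pi$ with $\pi\co\Pi_x\tura\Pi$ the surjection filling in $x$; then $\Pi/L$ is a quotient of $\Pi_x/K\in\cC$. The Push-image of $\Pi$ in $\out(\Pi_x/K)$ lies in the stabilizer $\out_{[\beta]}(\Pi_x/K)$ of the conjugacy class of $\beta$. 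By Lemma~\ref{tangential1} applied in the pro-$\cC$ completion and projected modulo $K$, this image lifts canonically to a subgroup $\tilde Q$ of $\aut_\beta(\Pi_x/K)/\Inn(\beta^\Z)$, with the kernel of the surjection $\tilde Q\tura\Pi/(\Pi\cap\GG_K)$ contained in $\Inn\bigl(C_{\Pi_x/K}(\beta)\bigr)/\Inn(\beta^\Z)$, a quotient of a subgroup of $\Pi_x/K$, and hence in $\cC$. Composing with the natural descent $\aut_\beta(\Pi_x/K)/\Inn(\beta^\Z)\to\aut(\Pi/L)$, $\gamma\in\Pi$ maps to $\inn_\gamma$, consistent with the inner-automorphism interpretation used in (i).

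Thus $\tilde Q$ sits in an extension whose quotient is the image of $\Pi$ in $\aut(\Pi/L)$, namely $\inn(\Pi/L)\cong(\Pi/L)/Z(\Pi/L)\in\cC$, and whose kernel lies in $\ker\bigl(\aut(\Pi_x/K)\to\aut(\Pi/L)\bigr)$. For $\cC=\cF$ this kernel is automatically finite; for $\cC=(p)$, its elements are of the form $g\mapsto g\cdot\chi(g)$ with $\chi$ a right $1$-cocycle valued in the $p$-group $\langle\langle\beta\rangle\rangle\cdot K/K$, whence the kernel is itself a $p$-group. Closure of $\cC$ under subgroups, quotients and extensions then gives $\tilde Q\in\cC$, and thus $\Pi/(\Pi\cap\GG_K)\in\cC$. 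The main obstacle is establishing the finite-level lift just described---i.e.\ controlling the centralizer $C_{\Pi_x/K}(\beta)$, which follows by projection from the pro-$\cC$ cyclic-centralizer theorem of Lemma~\ref{central}---together with the cocycle bound on the automorphism kernel; both reduce to standard facts about $\cC$-groups.
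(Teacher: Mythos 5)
Your proof of (i) coincides with the paper's: one restricts $\hat{\Delta}^\cC_x$ to the closure of $\Pi$ in $\cGG(S_x)^\cC$ and identifies the image with the closure of $\inn(\Pi)$ in $\cGG(S,x)^\cC$, which is $\hP^\cC$ by the sequence~(\ref{Birman1}). Your reduction of (ii) to the statement that $\Pi/(\Pi\cap\GG_K)\in\cC$ for every characteristic $K\lhd_\cC\Pi_x$ is also the right reduction, and for $\cC=\cF$ it is vacuous. The paper settles $\cC=(p)$ by citing Lemma~4.5.5 of \cite{RZ}; you attempt to reprove that input by hand, and this is where the argument has a genuine gap.

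The problem is the sentence asserting that the kernel of $\aut(\Pi_x/K)\to\aut(\Pi/L)$, whose elements have the form $g\mapsto g\cdot\chi(g)$ with $\chi$ valued in the $p$-group $N:=\langle\langle\beta\rangle\rangle K/K$, is ``itself a $p$-group.'' That inference is false: a cocycle valued in a $p$-group does not control the restriction of the automorphism to $N$. Concretely, for $p$ odd take $K=\Pi_x^p[\Pi_x,\Pi_x]$, so that $\Pi_x/K\cong(\Z/p)^3$ with basis $\bar a,\bar b,\bar\beta$, $N=\langle\bar\beta\rangle$ and $\Pi/L\cong(\Z/p)^2$; the automorphism fixing $\bar a,\bar b$ and sending $\bar\beta\mapsto\bar\beta^{\,c}$, for $c$ a generator of $(\Z/p)^\ast$, lies in that kernel and has order $p-1$. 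What rescues the argument is the hypothesis you state but do not use at this point, namely that the elements of $\tilde Q$ fix $\beta$: such an $f$ sends the conjugate $g\beta g^{-1}$ to $g\chi(g)\beta\chi(g)^{-1}g^{-1}$, which is congruent to $g\beta g^{-1}$ modulo $[N,N]$, so $f$ acts trivially on $N/\Phi(N)$ and hence, by the Burnside basis theorem, restricts to a $p$-element of $\aut(N)$; combined with triviality on $(\Pi_x/K)/N$ and the standard fact that the automorphisms acting trivially on both $N$ and $(\Pi_x/K)/N$ form a $p$-group, this yields that $\ker\bigl(\aut_\beta(\Pi_x/K)\to\aut(\Pi/L)\bigr)$ is a $p$-group. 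That supplementary argument is exactly the content of the lemma the paper quotes, so your ``whence'' conceals the entire point; as written, the step would fail, although the surrounding scaffolding (the lift $\tilde Q$, the control of $\Inn(C_{\Pi_x/K}(\beta))/\Inn(\beta^{\Z})$, and the identification of the image in $\aut(\Pi/L)$ with $\inn(\Pi/L)$) is sound.
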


\begin{proof}The restriction to $\check{\Pi}^\cC$ of the epimorphism 
$\hat{\Delta}^\cC_x\co\cGG(S_x)^\cC\tura \cGG (S,x)^\cC$ induces an epimorphism $\check{\Pi}^\cC\tura\hP^\cC$,
which is obviously an isomorphism for $\cC=\cF$. For $\cC=(p)$, this instead follows from Lemma~4.5.5 in \cite{RZ}.
\end{proof}

\subsection{The configuration space $\cC$-congruence topology}\label{conftop}
For a given a set $\{x_1,\ldots,x_n\}$ of distinct points on $S$ and $n\geq 2$, there is another natural representation to consider.
Let us denote by $S(n)$ the configuration space of $n$ points on $S$, by $\ol y=:\{y_1,\ldots,y_n\}$ 
a point of $S(n)$ and by $\Pi(n)$ its fundamental group based at $\ol{x}=:\{x_1,\ldots,x_n\}\in S(n)$. 
Let $\GG(S,\ol{x})$ be the mapping class group of the $n$-pointed surface $(S,x_1,\ldots,x_n)$. 
A self-homeomorphism of the pointed surface $(S,x_1,\ldots,x_n)$ induces one of the pointed space $(S(n),\ol{x})$.
There is then a natural faithful representation:
\[\rho_n\co\GG(S,\ol{x})\hookra\aut(\Pi(n)).\]
Alternatively, the representation $\rho_n$ is defined by restricting the inner automorphisms of $\GG(S,\ol{x})$ to its normal subgroup $\Pi(n)$.
Let $\hP(n)^\cC$ be the pro-$\cC$ completion of $\Pi(n)$, then $\rho_n$ induces a representation:
\[\rho_n^\cC\co\GG(S,\ol{x})\to\aut(\hP(n)^\cC).\]
We define the \emph{$\cC$-congruence completion} of $\GG(S,\ol{x})$ (or also the \emph{pro-$\cC$-congruence mapping class group}), denoted 
$\cGG(S,\ol{x})^{\cC}$, to be the closure of the image of the representation $\rho_n^\cC$. In Section~\ref{compcong}, we will compare this congruence
topology with those introduced in Section~\ref{standard}.

\subsection{The $\cS$-congruence topology is not virtually pro-$\cS$}\label{solvable}
In this section, we will show that, for $\cC=\cS$, the class of finite solvable groups, the pro-$\cS$ congruence mapping class group $\cGG_{g,n}^\cS$ is
not virtually pro-$\cS$:

\begin{proposition}\label{novirtual}For $g\geq 1$, the pro-$\cS$ congruence mapping class group $\cGG_{g,n}^\cS$ is not virtually pro-$\cS$. 
In particular, $\out(\hP_{g,n}^\cS)$ and $\aut(\hP_{g,n}^\cS)$ are not virtually pro-$\cS$ either.\end{proposition}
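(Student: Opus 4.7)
The plan is to argue by contradiction, confronting the composition factors of the finite quotients of $\cGG^\cS_{g,n}$ with those already forced by abelian levels. Suppose for contradiction that $\cGG^\cS_{g,n}$ were virtually pro-$\cS$, so that it contains an open pro-$\cS$ subgroup. Replacing it by its normal core (still open, and still pro-$\cS$ because closed subgroups of pro-$\cS$ groups are pro-$\cS$), I may assume there is an open normal subgroup $U\lhd\cGG^\cS_{g,n}$ with $U$ pro-$\cS$. For any finite continuous quotient $Q$ of $\cGG^\cS_{g,n}$, the image of $U$ in $Q$ is a finite solvable normal subgroup, and the corresponding quotient is itself a quotient of the fixed finite group $\cGG^\cS_{g,n}/U$. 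Consequently, every non-abelian simple composition factor appearing in any finite quotient of $\cGG^\cS_{g,n}$ must already be a composition factor of the single finite group $\cGG^\cS_{g,n}/U$, and only finitely many isomorphism classes of non-abelian simple groups can arise in this way.

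Next I would exhibit infinitely many such composition factors via abelian levels. For each prime $p$, the kernel $K_p$ of the reduction $\Pi_{g,n}\tura H_1(S_g,\F_p)$ is a characteristic subgroup of $\Pi_{g,n}$ with abelian quotient $\F_p^{2g}$, so $\GG(p)=\GG_{K_p}$ is an $\cS$-congruence level; and the image of $\rho_{(p)}$ is the full symplectic group $\Sp_{2g}(\F_p)$ by the classical surjectivity of $\GG_{1,n}\tura\SL_2(\Z)$ for $n\geq 1$ and of $\GG_{g,n}\tura\Sp_{2g}(\Z)$ for $g\geq 2$. Thus $\Sp_{2g}(\F_p)$ is a continuous finite quotient of $\cGG^\cS_{g,n}$ for every prime $p$. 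The simple group $\operatorname{PSp}_{2g}(\F_p)$ is a composition factor of this quotient; it is non-abelian for $g\geq 2$ and every prime $p$, and for $g=1$ and $p\geq 5$. These simple groups have pairwise distinct orders as $p$ varies, hence are pairwise non-isomorphic, which yields infinitely many non-abelian simple composition factors among the finite quotients of $\cGG^\cS_{g,n}$. This contradicts the first paragraph and proves that $\cGG^\cS_{g,n}$ is not virtually pro-$\cS$.

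The two remaining statements follow formally. By construction $\cGG^\cS_{g,n}$ is a closed subgroup of $\out(\hP_{g,n}^\cS)$, and $\out(\hP_{g,n}^\cS)\cong\aut(\hP_{g,n}^\cS)/\Inn(\hP_{g,n}^\cS)$ is a continuous quotient of $\aut(\hP_{g,n}^\cS)$. If either of these larger profinite groups were virtually pro-$\cS$, then intersecting (respectively pulling back) an open pro-$\cS$ subgroup with $\cGG^\cS_{g,n}$ would make $\cGG^\cS_{g,n}$ itself virtually pro-$\cS$, contradicting what has just been established. The step I expect to be most delicate is the composition-factor bookkeeping in the first paragraph, together with the reduction from an abstract open pro-$\cS$ subgroup to a normal one; the simplicity of $\operatorname{PSp}_{2g}(\F_p)$ for the relevant primes is a classical fact about finite symplectic groups and requires no new input.
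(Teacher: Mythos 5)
Your argument is correct, and it reaches the contradiction by a genuinely different route than the paper. The paper passes to the profinite nilpotent completion $\hP_{g,n}^{\mathrm{nil}}$ of $\Pi_{g,n}$, uses the decomposition $\cGG_{g,n}^{\mathrm{nil}}\cong\prod_{p}\cGG_{g,n}^{(p)}$, and observes that every open subgroup of this product contains all but finitely many of the factors $\cGG_{g,n}^{(p)}$, each of which surjects onto the non-solvable group $\Sp_{2g}(\Z/p)$; you instead stay inside $\cGG_{g,n}^{\cS}$ and run a Jordan--H\"older bookkeeping argument: a normal open pro-$\cS$ subgroup would confine all non-abelian simple composition factors of all finite continuous quotients to those of the single finite group $\cGG_{g,n}^{\cS}/U$, contradicting the infinitely many $\mathrm{PSp}_{2g}(\F_p)$ supplied by the abelian levels. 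The essential input --- that $\Sp_{2g}(\Z/p)$ is a continuous finite quotient of $\cGG_{g,n}^{\cS}$ for every prime $p$ --- is the same in both proofs; what your version buys is that it avoids the nilpotent completion and the product decomposition entirely and would apply verbatim to any profinite group admitting such a family of quotients, at the cost of the (routine) composition-factor reduction. Two small inaccuracies, neither of which affects the argument: the kernel of $\Pi_{g,n}\tura H_1(S_g,\F_p)$ need not be characteristic in $\Pi_{g,n}$ when $n\geq 2$ (use the kernel of $\Pi_{g,n}\tura H_1(S_{g,n},\F_p)$, which is characteristic with elementary abelian $p$-quotient and whose geometric level is contained in $\GG(p)$, exactly as the paper's definition of abelian levels already provides); and $\mathrm{PSp}_4(\F_2)\cong S_6$ is not simple, so for $g=2$ you should also discard $p=2$ (or replace $\mathrm{PSp}_{2g}(\F_p)$ by a non-abelian simple composition factor of $\Sp_{2g}(\F_p)$), which is harmless since only infinitely many primes are needed. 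The deduction for $\out(\hP_{g,n}^{\cS})$ and $\aut(\hP_{g,n}^{\cS})$ is fine: for the latter you should push an open pro-$\cS$ subgroup forward along the open quotient map $\aut(\hP_{g,n}^{\cS})\to\out(\hP_{g,n}^{\cS})$ rather than pull back, but the formal content is as you describe.
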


\begin{proof}Let $\hP_{g,n}^\mathrm{nil}$ be the profinite nilpotent completion of $\Pi_{g,n}$. There is a natural epimorphism with characteristic kernel 
$\hP_{g,n}^\cS\tura\hP_{g,n}^\mathrm{nil}$ and then a natural representation $\cGG_{g,n}^\cS\to\out(\hP_{g,n}^\mathrm{nil})$. Let us denote by
$\cGG_{g,n}^\mathrm{nil}$ the image of this representation. The decomposition 
$\aut(\hP_{g,n}^\mathrm{nil})\cong\prod_{p\,\,\mathrm{prime}}\aut(\hP_{g,n}^{(p)})$ induces one 
$\cGG_{g,n}^\mathrm{nil}\cong\prod_{p\,\,\mathrm{prime}}\cGG_{g,n}^{(p)}$. For every prime $p>0$, there is an epimorphism
$\cGG_{g,n}^{(p)}\tura\Sp_{2g}(\Z/p)$ and every open subgroup $U$ of $\cGG_{g,n}^\mathrm{nil}$ contains infinitely many factors $\cGG_{g,n}^{(p)}$. 
Therefore, the group $\cGG_{g,n}^\mathrm{nil}$ is not virtually pro-$\cS$ and so the same is true for $\cGG_{g,n}^\cS$.
\end{proof}

\begin{remark}\label{novirtual3}By the methods of \cite{Asada}, it is not difficult to show that the abelian level $\cGG_{1,1}^\cS(4)$ identifies with an 
open subgroup of $\cGG_{0,4}^\cS$ (see also the short exact sequence~(\ref{abelian_2_pro})). Therefore, Proposition~\ref{novirtual} also holds in genus 
$0$. We leave the details to the interested reader.
\end{remark}

\section[Modular subgroups]{Modular subgroups of pro-$\cC$-congruence mapping class groups}
\label{modular section}
A fundamental feature of the mapping class group $\GG(S)$ of a connected  oriented surface of finite negative Euler characteristic $S$ is that
the stabilizer of a set of isotopy classes of disjoint simple closed curves (embedded circles) on $S$  is described in terms of mapping class groups
of the subsurfaces obtained cutting $S$ along those curves. In this section, we will show that a similar property holds for the pro-$\cC$ congruence 
mapping class group.

\subsection{The closure of stabilizers}\label{the closure}
The following theorem is a generalization to the pro-$\cC$ case of Lemma~4.6 of \cite{faith}.
Let us recall that a simple closed curve (an embedded circle) on $S$
is \emph{peripheral} if it bounds on one side a disc with a single puncture.
We denote by $\GG(S\ssm\gm)$ the direct product of the pure mapping class groups of the connected components of the surface $S\ssm\gm$:

\begin{theorem}\label{stabilizer} Let $S$ be a connected  oriented surface of finite negative Euler 
characteristic and let $\gm\subset S$ be  a non-peripheral simple closed curve on $S$.
Denote by  $\GG(S)_{\vec{\gm}}$ the subgroup of elements of $\GG(S)$ which preserve the free homotopy
class of $\gm$ and a fixed orientation on it and by $\cGG (S)_{\vec{\gm}}^{\cC}$ the closure of this 
subgroup in the $\cC$-congruence completion $\cGG(S)^{\cC}$ of $\GG (S)$. Then there is 
a natural surjection $\cGG(S)_{\vec{\gm}}^{\cC}\twoheadrightarrow\cGG(S\ssm\gm)^{\cC}$ whose 
kernel is topologically generated by the Dehn twist $\tau_\gamma$ (making it isomorphic to $\ZZ^\cC$). 
\end{theorem}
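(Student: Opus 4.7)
\emph{Strategy.} I would lift the classical discrete short exact sequence
\[1\lra\langle\tau_\gm\rangle\lra\GG(S)_{\vec\gm}\xrightarrow{\ r\ }\GG(S\ssm\gm)\lra 1,\]
where $r$ is induced by cutting $S$ along $\gm$, to a short exact sequence of $\cC$-congruence completions. The three tasks are: (a) extend $r$ to a continuous homomorphism $r^\cC\co\cGG(S)_{\vec\gm}^\cC\to\cGG(S\ssm\gm)^\cC$; (b) prove $r^\cC$ is surjective; and (c) identify $\ker r^\cC$ with the procyclic group $\ZZ^\cC$ topologically generated by $\tau_\gm$.

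\emph{Continuity and surjectivity.} To extend $r$, I must compare the two $\cC$-congruence topologies: for every characteristic $K\lhd_\cC\pi_1(S\ssm\gm)$, I need a characteristic $K'\lhd_\cC\pi_1(S)$ with $r(\GG_{K'}\cap\GG(S)_{\vec\gm})\subseteq\GG_K$. Geometrically, this amounts to taking the Galois $\pi_1(S\ssm\gm)/K$-cover $\wt{S\ssm\gm}\to S\ssm\gm$ and, after possibly shrinking $K$ to a further characteristic $\cC$-open subgroup so that the monodromies at the cusp(s) corresponding to $\gm$ can be matched, extending it to an unramified cover of $S$; the characteristic hull of the resulting subgroup of $\pi_1(S)$ is the desired $K'$. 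This produces the continuous extension $r^\cC$, and surjectivity is then automatic from the discrete surjectivity of $r$, the density of $\GG(S\ssm\gm)$ in $\cGG(S\ssm\gm)^\cC$, and the compactness of $\cGG(S)_{\vec\gm}^\cC$.

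\emph{The kernel.} Let $Z:=\overline{\langle\tau_\gm\rangle}\subseteq\cGG(S)_{\vec\gm}^\cC$. Clearly $Z\subseteq\ker r^\cC$, since $\tau_\gm$ is supported in a collar of $\gm$ and so acts trivially on $\pi_1(S\ssm\gm)$. To show $Z\cong\ZZ^\cC$, I would exhibit, for every non-trivial $A\in\cC$, a characteristic $\cC$-open $K_A\lhd_\cC\pi_1(S)$ in which the image of $\tau_\gm$ in $\out(\pi_1(S)/K_A)$ has order divisible by $|A|$: for non-separating $\gm$ this is visible already on $H_1(S,\Z/|A|)$ as a symplectic transvection, and for separating $\gm$ it follows from the Johnson-type action on an appropriate nilpotent $\cC$-quotient. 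For the reverse inclusion $\ker r^\cC\subseteq Z$, I require the dual topological comparison: given a characteristic $K'\lhd_\cC\pi_1(S)$, the subgroup obtained by intersecting with $\pi_1(S\ssm\gm)$ and taking its characteristic hull furnishes a characteristic $K\lhd_\cC\pi_1(S\ssm\gm)$ such that any lift (via a section of $r$) of $\GG_K$ lies in $\GG_{K'}\cdot\langle\tau_\gm\rangle$. This matches the two topologies modulo $\tau_\gm$, forcing $\ker r^\cC=Z$.

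\emph{Main obstacle.} The technical heart of the argument is the covering-theoretic bookkeeping in the continuity step and its dual: one has to keep $K$ and $K'$ simultaneously $\cC$-open, characteristic, and compatible with the amalgamated-free-product (or HNN) decomposition of $\pi_1(S)$ along $\gm$, which is trickier in the non-separating case. It is essential that we work with $\cC$-congruence completions, not full pro-$\cC$ completions: the latter functor is not in general left exact (cf.\ Proposition~\ref{novirtual}), so one cannot simply apply it termwise to the discrete short exact sequence.
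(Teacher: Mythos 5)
Your skeleton (discrete short exact sequence, continuous extension, kernel identification) is the same as the paper's, and your continuity/surjectivity step is essentially the paper's: the paper gets it from Theorem~3.6 of \cite{BZ} (quoted here as Lemma~\ref{subsurface}), which says that for a subsurface $S_0\subset S$ the map $\hp_1(S_0)^\cC\to\hp_1(S)^\cC$ is injective with closed, self-normalizing image; your cover-extension sketch is the standard route to that statement. The genuine gap is in the inclusion $\ker r^\cC\subseteq Z$. Your ``dual topological comparison'' asserts that for each characteristic $K'\lhd_\cC\pi_1(S)$ one can find $K\lhd_\cC\pi_1(S\ssm\gm)$ so that every element of $\GG(S)_{\vec\gm}$ acting trivially on $\pi_1(S\ssm\gm)/K$ lies in $\GG_{K'}\cdot\langle\tau_\gm\rangle$. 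That is essentially a restatement of the conclusion, and it cannot be established level-by-level by ``intersecting and taking characteristic hulls''. Write $\Pi$ as an HNN extension of $\Pi_\gm=\pi_1(S^\gm,x_-)$ with stable letter $d$ conjugating $c_-$ to $c_+$ (the two boundary loops of the blow-up along $\gm$). An element $f$ of the kernel lifts to an automorphism $\tilde f$ of $\hP^\cC$ fixing $\hP_\gm^\cC$ pointwise, and the HNN relation only forces $d^{-1}\tilde f(d)$ to centralize $c_-$. In a finite quotient $\Pi/K'$ the centralizer of the image of $c_-$ is in general far larger than the image of $\langle c_-\rangle$, so no choice of $K$ pins down $\tilde f(d)$ modulo $K'$ up to a power of $\tau_\gm$. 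The step only closes in the profinite limit, via the hard fact that the centralizer of a primitive element $c_-$ in $\hP^\cC$ is exactly $c_-^{\ZZ^\cC}$ (Proposition~3.5 of \cite{BZ}, quoted here as Lemma~\ref{central}); this yields $\tilde f(d)=dc_-^k$ with $k\in\ZZ^\cC$ and hence $f=\tau_\gm^{-k}$. Your proposal never invokes this result or any substitute for it, and without it the kernel computation does not go through.

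Two secondary points. First, your verification that $Z\cong\ZZ^\cC$ via the transvection on $H_1(S,\Z/|A|)$ handles only the nonseparating case and only bounds the order from below; one must also check that the $\cC$-congruence topology induces on $\langle\tau_\gm\rangle$ precisely the pro-$\cC$ topology (neither finer nor coarser), which the paper does through the explicit local monodromy computations of \cite{sym} in the proof of Theorem~\ref{stab pro-curves}. Second, the failure of left exactness of the pro-$\cC$ completion functor is the subject of Remark~\ref{obstruction}, not of Proposition~\ref{novirtual}, which concerns $\cGG_{g,n}^\cS$ not being virtually pro-$\cS$.
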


We will need the following lemma from \cite{BZ} (cf.\ Theorem~3.6 ibid.):

\begin{lemma}\label{subsurface}
Let $\sg$ be a closed one-dimensional submanifold of an oriented surface $S$, $S_0$
a connected component of $S\ssm\sg$ and $x\in S_0$. Then the injection 
$\pi_1(S_0,x)\hookrightarrow \pi_1(S,x)$ induces an  injection $\hp_1(S_0,x)^\cC\hookrightarrow \hp_1(S,x)^\cC$
with closed image.  Moreover, that image is its own normalizer in $\hp_1(S,x)^\cC$.
\end{lemma}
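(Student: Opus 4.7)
The plan is to extend the classical discrete cutting homomorphism to $\cC$-congruence completions and identify its kernel, using Lemma~\ref{subsurface} as the key combinatorial input. On the discrete level we have the classical short exact sequence
\[
1\lra\langle\tau_\gm\rangle\lra\GG(S)_{\vec\gm}\sr{r}{\lra}\GG(S\ssm\gm)\lra 1,
\]
where $r$ is obtained by restricting a representative homeomorphism of $\phi\in\GG(S)_{\vec\gm}$ to the components of $S\ssm\gm$; the orientation on $\gm$ ensures no component is swapped and no side is flipped, so the image lies in the pure mapping class group of $S\ssm\gm$.

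First I would show that $r$ is continuous for the $\cC$-congruence topologies, so that it extends to a continuous homomorphism $\hat r^\cC\co\cGG(S)_{\vec\gm}^\cC\tura\cGG(S\ssm\gm)^\cC$. By Lemma~\ref{subsurface}, for each component $S_i$ of $S\ssm\gm$ (with a suitably chosen base point) the natural map $\hp_1(S_i)^\cC\hookra\Pi^\cC$ is a closed embedding of profinite groups, hence a topological embedding; consequently the topology induced on $\hp_1(S_i)^\cC$ coincides with its own pro-$\cC$ topology. Given any characteristic $\cC$-open subgroups $N_i\lhd_\cC\hp_1(S_i)^\cC$, one can therefore find a characteristic $\cC$-open $N\lhd_\cC\Pi$ with $N\cap\hp_1(S_i)^\cC\subset N_i$ for every $i$; a mapping class in $\GG_N\cap\GG(S)_{\vec\gm}$ then acts trivially on each $\hp_1(S_i)^\cC/N_i$, so $r$ sends it into $\prod_i\GG_{N_i}$. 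Surjectivity of $\hat r^\cC$ follows from the surjectivity of $r$ together with density of the discrete image in both completions.

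The kernel of $\hat r^\cC$ plainly contains the closure $\overline{\langle\tau_\gm\rangle}$. To see this closure is isomorphic to $\ZZ^\cC$, I would observe that modulo any characteristic $\cC$-open $N\lhd\Pi$ the Dehn twist $\tau_\gm$ acts on $\Pi/N$ via the standard Dehn--Nielsen formula, with order (modulo inner automorphisms) equal to the order of the class $[\gm]\in\Pi/N$; since $\gm$ is non-peripheral it is a non-trivial primitive element of $\Pi$, so these orders exhaust a cofinal system of $\cC$-numbers. The main obstacle is the reverse containment $\ker\hat r^\cC\subset\overline{\langle\tau_\gm\rangle}$. Given $\phi\in\ker\hat r^\cC$, I would use Lemma~\ref{central} (applied to $\gm$) to choose a lift $\td\phi\in\aut(\Pi^\cC)$ fixing a chosen representative of $\gm$, noting that the ambiguity of such a lift is exactly $\Inn(\gm^{\ZZ^\cC})=\overline{\langle\tau_\gm\rangle}$. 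The vanishing of $\hat r^\cC(\phi)$ forces $\td\phi$ to act by an inner automorphism on each $\hp_1(S_i)^\cC$; the self-normalizing property in Lemma~\ref{subsurface} then allows one to further adjust $\td\phi$ (by inner automorphisms of $\Pi^\cC$ that still fix $\gm$) to fix each $\hp_1(S_i)^\cC$ pointwise. Since $\Pi^\cC$ is topologically generated by the subgroups $\hp_1(S_i)^\cC$ together with~$\gm$, the adjusted $\td\phi$ is then trivial, so $\phi\in\overline{\langle\tau_\gm\rangle}$. The delicate technical point is justifying this topological generation within the pro-$\cC$ category --- it amounts to a pro-$\cC$ amalgamated product or HNN statement for $\Pi^\cC$ along $\gm$ --- and it is here that the full strength of Lemma~\ref{subsurface} must be exploited.
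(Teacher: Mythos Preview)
Your proposal is not a proof of Lemma~\ref{subsurface} at all --- you are using that lemma as an input and instead sketching a proof of Theorem~\ref{stabilizer}. The paper does not prove Lemma~\ref{subsurface} either (it is quoted from \cite{BZ}), so the only meaningful comparison is between your argument and the paper's proof of Theorem~\ref{stabilizer}.

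On that comparison: your treatment of the continuity of the cutting map and of the surjectivity of $\hat r^\cC$ is essentially the paper's. The kernel argument, however, has genuine gaps. First, the identification $\Inn(\gm^{\ZZ^\cC})=\overline{\langle\tau_\gm\rangle}$ is simply false: inner automorphisms of $\hat\Pi^\cC$ become trivial in $\out(\hat\Pi^\cC)$, so $\Inn(\gm^{\ZZ^\cC})$ maps to the identity there, not to the closure of the Dehn twist. Second --- and this is the decisive issue --- your generation claim fails in the non-separating case. There $S\ssm\gm$ has a single component $S_0$, the element $\gm$ (i.e.\ $c_-$) already lies inside $\hp_1(S_0)^\cC$, and what is missing is the HNN stable letter $d$; so ``the subgroups $\hp_1(S_i)^\cC$ together with $\gm$'' do \emph{not} topologically generate $\hat\Pi^\cC$, and fixing $\hp_1(S_0)^\cC$ pointwise does not force $\td\phi$ to be trivial. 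The paper confronts exactly this point: it chooses a lift $\tilde f\in\aut(\hat\Pi^\cC)$ that is the identity on $\hat\Pi_\gm^\cC$, then computes $\tilde f(d)$ directly from the HNN relation $dc_-d^{-1}=c_+$, obtaining $\tilde f(d)=dc_-^k$ for some $k\in\ZZ^\cC$ via Lemma~\ref{central}, and recognises this automorphism as (a lift of) $\tau_\gm^{-k}$. Your last paragraph correctly flags the pro-$\cC$ HNN structure as the crux, but you stop short of the computation that actually closes the argument.
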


\begin{proof}[Proof of Theorem \ref{stabilizer}]
Every mapping class which fixes the homotopy class of $\gm$ fixes its isotopy class and hence contains a member which
leaves $\gm$  invariant in an orientation preserving manner. It is easily checked that this identifies $\GG(S)_{\vec{\gm}}$ 
with the connected component group of diffeomorphisms of $S$ having that property and so we have a natural homomorphism 
$\GG(S)_{\vec{\gm}}\to \GG(S\ssm\gm)$. This homomorphism is surjective, for every
mapping class of $S\ssm\gm$ is representable by a  diffeomorphism that is the identity on the trace of  
a neighborhood of $\gm$ on $S\ssm\gm$. It is also easy to see that the kernel is generated by $\tau_\gm$. 

Denote by $q\co S^\gm\to S$ the real oriented blowup of $S$ along $\gm$. This is a surface with boundary $\partial S^\gm$. 
Its interior maps isomorphically onto $S\ssm\gm$ and the preimage of $\gm$ consists of two boundary components $\gm_\pm$, 
each mapping isomorphically onto $\gm$ and corresponding to the choice of an orientation of $\gm$. 
We choose a base point $x\in \gm$ and denote by $x_\pm$ its preimage in $\gm_\pm$. 

In what follows we assume that $S\ssm\gm$ is connected, the disconnected case being somewhat easier to treat. We abbreviate 
$\Pi:=\pi_1(S,x)$ and $\Pi_\gm:=\pi_1(S^\gm, x_-)$. Then $\Pi$ can then be obtained from  $\Pi_\gm$ as follows: choose
an embedded interval $\delta$ from $x_-$ to $x_+$ so that its image $q(\delta)$ in $S$ is an embedded circle and denote by 
$c_-, c_+ \in \Pi_\gm$ the elements defined by respectively $\gm_-$ and the loop which first  traverses $\delta$, then
$\gm_+$ and then returns via the inverse of $\delta$.  Then $\Pi$ can be identified with the quotient of the free product of $\Pi_\gm$ and 
a multiplicatively written copy $d^\Z$ of $\Z$ by the relation $d c_-d^{-1}=c_+$, with $d$ mapping to the class of $q(\delta)$ in $\Pi$. 
This is an instance of the  HNN construction; in particular, the homomorphism $\Pi_\gm\to \Pi$ is injective. 
According to Lemma \ref{subsurface}, this induces an injection $\hat\Pi_\gm^\cC\to \hat\Pi^\cC$ with closed image and with the property 
that this image is its own normalizer. We thus obtain a homomorphism $\GG(S)_{\vec{\gm}}\to \out (\Pi_\gm)$ which enjoys the property 
that it is continuous for the $\cC$ topology induced by $\Pi$ resp.\ $\Pi_\gm$. This induces a homomorphism 
$\cGG(S)_{\vec{\gm}}\to\out (\Pi_\gm^\cC)$ with image $\cGG(S^\gm)\cong \cGG(S\ssm\gm)$.

Now let $f$ be in the kernel of the surjection $\cGG(S)_{\vec\gm}\to \cGG(S^\gm)$.
By definition $f$ lifts to an automorphism $\tilde{f}$ of $\hat\Pi^{\cC}$ that is the identity on $\hP_\gm^{\cC}$. In particular, 
$\tilde{f}$ fixes $c_\pm$ and so
\[
\tilde{f}(d)c_-\tilde{f}(d)^{-1}=\tilde{f}(dc_-d^{-1})=\tilde{f}(c_+)=c_+=
d c_-d^{-1}. 
\]
Hence $d^{-1}\tilde{f}(d)$ centralizes $c_-$.
By Proposition~3.5 in \cite{BZ}, this implies that $\tilde{f}(d)=dc_-^k$, for some $k\in\hat{\Z}^\cC$. 
In terms of the HNN decomposition above, the profinite Dehn twist $\tau_\gm^{-k}$ takes $d$ to $dc_-^k$ and induces 
the identity in $\hP_\gm^{\cC}$. Since  $\hat\Pi^\cC_\gm$  and $d$ generate a dense subgroup of $\hat\Pi^\cC$,  
it follows that $f=\tau_\gm^{-k}$.
\end{proof}

\subsection{The complex of pro-$\cC$ curves on $S$}\label{the complex}
Sets of isotopy classes of disjoint simple closed curves on the surface $S$ can be arranged so that they form a simplicial complex: 

\begin{definition}\label{curve-complex}A \emph{multicurve} $\sg$ on $S$ is a set of disjoint
simple closed curves on $S$ such that $S\ssm\sg$ does not contain discs or one-punctured discs.
The complex of curves $C(S)$ is the abstract simplicial complex whose simplices are isotopy classes of multicurves on $S$.
\end{definition}

For $S=S_{g,n}$, it is easy to check that the combinatorial dimension of $C(S_{g,n})$ is
$n-4$ for $g=0$ and $3g-4+n$ for $g\geq 1$. There is a natural simplicial action of $\GG(S)$ on $C(S)$.
In this subsection, we will define a pro-$\cC$ version of the complex of curves $C(S)$, generalizing definitions and results 
given in Section~3 and 4 of \cite{faith}.

Let $\cL=C(S)_0$ be the set of isotopy classes of nonperipheral simple closed curves on $S$. Let $\Pi/\!\sim$ be the set of conjugacy
classes of elements of $\Pi$ (:$=\pi_1(S,x)$) and let $\cP_2(\Pi/\!\sim)$ be the set of unordered pairs of elements of $\Pi/\!\sim$.
For a given $\gm\in\Pi$, let us denote by $\gm^{\pm 1}$ the set $\{\gm,\gm^{-1}\}$
and by $[\gm^{\pm 1}]$ its equivalence class in $\cP_2(\Pi/\!\sim)$. Let us then define the
natural embedding $\iota\co\cL\hookra\cP_2(\Pi/\!\sim)$, choosing, for
an element $\gm\in\cL$, an element $\vec{\gm}_\ast\in\Pi$ whose free homotopy class
contains $\gm$ and letting $\iota(\gm):=[\vec{\gm}_\ast^{\pm 1}]$.

Let $\hP^\cC/\!\sim$ be the set of conjugacy classes of elements of $\hP^\cC$ and $\cP_2(\hP^\cC/\!\sim)$ the profinite set of
unordered pairs of elements of $\hP^\cC/\!\sim$. Since $\Pi$ is conjugacy $p$-separable (cf.\ \cite{Paris} or Theorem~4.3 \cite{scc}),  
the set $\Pi/\!\sim$ embeds in the profinite set $\hP^\cC/\!\sim$. So, let us define the set of \emph{nonperipheral pro-$\cC$ simple closed curves 
(or circles)} $\hL^\cC$ on $S$ to be the closure of the set $\iota(\cL)$ inside the profinite set $\cP_2(\hP^\cC/\!\sim)$. 
An ordering of the set $\{\alpha,\alpha^{-1}\}$ is preserved by the conjugacy action and
defines \emph{an orientation} for the associated equivalence class $[\alpha^{\pm1}]\in\hL^\cC$.

For all $k\geq 0$, there is a natural embedding of the set $C(S)_{k-1}$ of isotopy classes of
multicurves on $S$ of cardinality $k$ into the profinite set $\cP_k(\hL^\cC)$
of unordered subsets of $k$ elements of $\hL^\cC$. Let us then define the set of \emph{pro-$\cC$ multicurves}
on $S$ as the union of the closures of the sets $C(S)_{k-1}$ inside the profinite sets $\cP_k(\hL^\cC)$, for all $k>0$.

A \emph{simplicial profinite complex} is an abstract simplicial complex whose 
set of vertices is endowed with a profinite topology such that the sets of $k$-simplices, 
with the induced topologies, are compact and then profinite, for all $k\geq 0$. For these simplicial 
complexes, the procedure which associates to an abstract simplicial complex and an ordering of
its vertex set a simplicial set produces a simplicial profinite set.

\begin{definition}\label{geopro}Let $L(\hP^\cC)$ be the abstract simplicial profinite complex whose simplices are pro-$\cC$ multicurves on
$S$. The abstract simplicial profinite complex $L(\hP^\cC)$ is called the \emph{complex of pro-$\cC$ curves on $S$}.
\end{definition}

There is a natural continuous action of the pro-$\cC$ congruence mapping class group $\cGG(S)^\cC$ on the complex of
pro-$\cC$ curves $L(\hP^\cC)$. There are finitely many orbits of $\cGG(S)^\cC$ in $L(\hP^\cC)_k$ each containing an element of
$C(S)_k$, for $k\geq 0$. It is not difficult to show (cf.\ Section~4 \cite{faith}) that these orbits correspond to the possible topological
types of the surface $S\ssm\sg$, for $\sg$ a multicurve on $S$.

Let $G$ be a discrete group acting with a finite number of orbits on a set $X$ and let $\td{G}$ be 
the profinite completion of $G$ with respect to an inverse system $\{U_\ld\}_{\ld\in\Ld}$ of finite
index normal subgroups. The \emph{$\td{G}$-completion $\td{X}$ of the set $X$} is defined to be
the inverse limit $\varprojlim_{\ld\in\Ld}X/U_\ld$. The profinite set $\td{X}$ is naturally endowed 
with a continuous $\td{G}$-action and a $G$-equivariant map $X\ra\td{X}$. It is characterized by 
the following universal property. Given a profinite set $Y$ endowed with a continuous $\td{G}$-action 
and a $G$-equivariant map $f\co X\ra Y$, there exists a unique continuous 
$\td{G}$-equivariant map $\td{f}\co\td{X}\ra Y$ extending $f$.

Let us also observe that, if $\td{U}$ is an open subgroup of $\td{G}$ and $U$ is its inverse image
in $G$ via the canonical map $G\ra\td{G}$, then a $G$-set $X$ is also a $U$-set and its 
$\td{U}$-completion is naturally isomorphic to its $\td{G}$-completion.

The main result of \S 4 in \cite{faith} (cf.\ Theorem~4.2 in \cite{faith}) is that, for $\cC$ the class of all finite
groups, the profinite set $\hL(S)$ is the $\cGG(S)$-completion of the discrete $\GG(S)$-set $\cL(S)$,
i.e. there is a natural continuous isomorphism of $\cGG(S)$-sets:
\[\hL(S)\cong\ilim_{\ld\in\Ld}\,\cL(S)\left/\GG_\ld\right.,\]
where $\{\GG_\ld\}_{\ld\in\Ld}$ is a tower of finite index normal subgroup of $\GG(S)$ 
which forms a fundamental system of neighbourhoods of the identity for the full congruence topology.
In fact, this result holds for any extension closed class of finite groups $\cC$:

\begin{theorem}\label{isomorphism}Let $\{\GG_\ld\}_{\ld\in\Ld}$ be a tower of finite index normal subgroup of $\GG(S)$
which forms a fundamental system of neighbourhoods of the identity for the $\cC$-congruence topology. There is then a natural 
continuous isomorphism of $\cGG(S)^\cC$-sets:
\begin{equation}\label{realization}\hL^\cC(S)\cong\ilim_{\ld\in\Ld}\,\cL(S)\left/\GG_\ld\right..\end{equation}
\end{theorem}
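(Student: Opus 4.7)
The plan is to build the desired isomorphism from the universal property of the $\cGG(S)^\cC$-completion of a $\GG(S)$-set and then verify bijectivity by an orbit-and-stabilizer analysis. The main inputs will be Theorem~\ref{stabilizer} (identifying $\cGG(S)_{\vec\gm}^\cC$ as a $\ZZ^\cC$-extension of $\cGG(S\ssm\gm)^\cC$), Lemma~\ref{subsurface} (normalizer rigidity of subsurface groups inside $\hP^\cC$), and Lemma~\ref{central} (rigidity of the centralizer of a primitive element of $\hP^\cC$).

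The embedding $\iota\co\cL(S)\hookra\hL^\cC(S)$ is $\GG(S)$-equivariant, and $\hL^\cC(S)\subset\cP_2(\hP^\cC/\!\sim)$ is a profinite set with a continuous $\cGG(S)^\cC$-action. By the universal property recalled just before the statement, $\iota$ extends uniquely to a continuous $\cGG(S)^\cC$-equivariant map
\[
\Phi\co\ilim_{\ld\in\Ld}\cL(S)/\GG_\ld\lra\hL^\cC(S).
\]
The source is compact, so $\Phi$ has closed image; this image contains $\iota(\cL(S))$ and hence its closure $\hL^\cC(S)$, so $\Phi$ is surjective. For injectivity I would decompose both sides into $\cGG(S)^\cC$-orbits. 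Since $\GG(S)$ acts on $\cL(S)$ with finitely many orbits (classified by the topological type of the cut surface $S\ssm\gm$), and since orbits of a profinite group action are closed, each orbit $\cO\subset\cL(S)$ has a well-defined closure in either side equal to a single $\cGG(S)^\cC$-orbit; distinct $\GG(S)$-orbits cannot be identified by $\Phi$, since the topological type of $\gm$ is detected by combinatorial invariants of the conjugacy class $[\vec\gm_\ast]$ that survive in $\hP^\cC/\!\sim$. It then suffices to prove, for each $\gm\in\cL(S)$, the stabilizer equality
\[
\mathrm{Stab}_{\cGG(S)^\cC}\bigl(\iota(\gm)\bigr)=\cGG(S)_\gm^\cC,
\]
the right-hand side denoting the closure of $\GG(S)_\gm$ in $\cGG(S)^\cC$; the inclusion $\supset$ is immediate by continuity.

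To prove the remaining inclusion, take $f\in\mathrm{Stab}_{\cGG(S)^\cC}(\iota(\gm))$, so $f$ preserves $[\vec\gm_\ast^{\pm 1}]\in\cP_2(\hP^\cC/\!\sim)$. After a case analysis — and, in the orientation-swapping case, composing with a mapping class in $\GG(S)_\gm\ssm\GG(S)_{\vec\gm}$ when such an element exists, or ruling the case out directly otherwise — I may assume $f$ fixes the oriented conjugacy class $[\vec\gm_\ast]$. By Lemma~\ref{central}, any lift of $f$ to $\aut(\hP^\cC)$ can be adjusted, uniquely modulo $\Inn(\vec\gm_\ast^{\ZZ^\cC})$, to an automorphism $\td f$ fixing $\vec\gm_\ast$ itself. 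The HNN description of $\hP^\cC$ with vertex group $\hP^\cC_\gm$ and stable letter $d$ from the proof of Theorem~\ref{stabilizer}, combined with Lemma~\ref{subsurface} (making $\hP^\cC_\gm$ its own normalizer in $\hP^\cC$), then forces $\td f$ to preserve the subsurface subgroup $\hP^\cC_\gm$; hence $f$ descends to an element of $\cGG(S\ssm\gm)^\cC$. Theorem~\ref{stabilizer}, which says the kernel of $\cGG(S)_{\vec\gm}^\cC\tura\cGG(S\ssm\gm)^\cC$ is topologically generated by $\tau_\gm$, now allows me to correct $f$ by a suitable power of $\tau_\gm$ and place it inside $\cGG(S)_{\vec\gm}^\cC\subset\cGG(S)_\gm^\cC$, as required.

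The hard part is precisely this stabilizer identification: the plan amounts to ruling out \emph{ghost} elements of $\cGG(S)^\cC$ that profinitely fix the pro-$\cC$ circle $\iota(\gm)$ without arising as limits of mapping classes truly preserving $\gm$. The full strength of Theorem~\ref{stabilizer}, Lemma~\ref{central} and Lemma~\ref{subsurface} is bundled together to reduce the a priori continuous profinite symmetry of $\iota(\gm)$ to the discrete combinatorial symmetry of $\gm$, via the HNN decomposition of $\hP^\cC$ along $\gm$ and the rigidity of the centralizer of $\vec\gm_\ast$.
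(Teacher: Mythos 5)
Your overall skeleton (extend $\iota$ by the universal property, get surjectivity from compactness and density, then reduce injectivity to an orbit-by-orbit stabilizer computation) is sound, and the surjectivity argument is correct. The problem is that the entire difficulty of the theorem is concentrated in the stabilizer equality $\mathrm{Stab}_{\cGG(S)^\cC}(\iota(\gm))=\cGG(S)^\cC_\gm$, and your argument for it contains an unjustified leap at the decisive moment. Having lifted $f$ to an automorphism $\td f$ of $\hP^\cC$ fixing $\vec\gm_\ast=c_-$, you assert that the HNN description together with Lemma~\ref{subsurface} ``forces'' $\td f$ to preserve $\hP^\cC_\gm$. Nothing in those inputs gives this: Lemma~\ref{subsurface} says $\hP^\cC_\gm$ is its own normalizer, which is useful only once you already know $\td f(\hP^\cC_\gm)$ is commensurable with, or at least related to, $\hP^\cC_\gm$; and the HNN splitting is not canonically recoverable from the single element $c_-$ by an abstract automorphism of the profinite group. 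Note that Theorem~\ref{stabilizer} cannot supply the missing step either, because its hypotheses already place $f$ in $\cGG(S)^\cC_{\vec\gm}$, i.e.\ $f$ is assumed to be a limit of mapping classes that genuinely preserve $\gm$, and for those the preservation of $\hP^\cC_\gm$ is essentially built into the construction. The elements you must control are limits $f=\lim f_n$ with $f_n\in\GG(S)$ and $f_n(\gm)\to\gm$ in $\hL^\cC$ but $f_n(\gm)\neq\gm$ for all $n$; for such $f$ one can only choose lifts with $\td f_n(c_-)\to c_-$, and the claim that $\hP^\cC_{f_n(\gm)}$ then converges to $\hP^\cC_\gm$ is a rigidity statement essentially equivalent to the theorem you are proving. (Your separation of distinct $\GG(S)$-orbits is also only sketched, but that point is genuinely available from finite-level homological invariants.)

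It is worth noting that the paper runs the logic in the opposite direction. Its proof of Theorem~\ref{isomorphism} is carried out at finite levels: one shows that for every level $\GG_\ld$ there is a characteristic $\cC$-open $K\lhd\Pi$ such that two simple closed curves whose images in $\Pi/K$ are conjugate up to inversion already lie in the same $\GG_\ld$-orbit (this is the content of the proof of Theorem~4.2 of \cite{faith}, with Lemma~\ref{central} replacing its profinite analogue as the group-theoretic input), which gives injectivity of $\cL(S)/\GG_\ld\to\hL^\cC/\ol{\GG_\ld}$ directly. The stabilizer identity you are trying to prove first is then \emph{deduced} from the theorem, via the formal Proposition~6.5 of \cite{PFT}, as the paper does in the paragraph immediately following the statement. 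To repair your proposal you would either need to supply the finite-level separation argument, or find an independent proof that an element of $\cGG(S)^\cC$ fixing the pro-$\cC$ isotopy class $\iota(\gm)$ preserves the conjugacy class of the closed subgroup $\hP^\cC_\gm$; as written, that step is assumed rather than proved.
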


\begin{proof}In order to prove Theorem~\ref{isomorphism}, we just need to rephrase the proof of Theorem~4.2 in \cite{faith}
in terms of pro-$\cC$ completions and $\cC$-congruence completions instead of full profinite completions and full congruence completions 
of all the objects involved, where Lemma~4.3 in \cite{faith} is replaced by its pro-$\cC$ version (cf.\ Lemma~\ref{central} in this paper).
\end{proof}

Given a simplex $\sg\in C(S)$, let us denote also by $\sg$ its image in $L(\hP^\cC)$ and by $\cGG(S)^\cC_\sg$ the corresponding 
$\cGG(S)^\cC$-stabilizer. From Proposition~6.5 in \cite{PFT} and Theorem~\ref{isomorphism}, it follows that $\cGG(S)^\cC_\sg$
is the closure of the stabilizer $\GG(S)_\sg$ (for the action of $\GG(S)$ on $C(S)$) in $\cGG(S)^\cC$.

Since every $\sg\in L(\hP^\cC)$ is in the $\cGG(S)^\cC$-orbit of some simplex in the image of $C(S)$, this gives a complete description
of stabilizers for the action of $\cGG(S)^\cC$ on the pro-$\cC$ curve complex $L(\hP^\cC)$. The precise result is the following, where, for 
a simplex $\sg\in C(S)$, we denote by $\GG(S\ssm\sg)$ the direct product of the pure mapping class groups of the connected components of 
the surface $S\ssm\sg$:

\begin{theorem}\label{stab pro-curves}Let $\sg\in L(\hP^\cC)$ be a simplex in the image of $C(S)$. Let $\Sigma_{\sg^{\pm}}$ be the group of 
permutations on the set $\sg^{\pm}:=\sg^{+}\cup\sg^{-}$ of oriented simple closed curves in $\sg$.
Then, the stabilizer $\cGG(S)^\cC_\sg$ of $\sg$ for the action of $\cGG(S)^\cC$ on $L(\hP^\cC)$ fits into the two exact sequences:
\[\begin{array}{c}
1\ra\cGG(S)^\cC_{\vec{\sg}}\to\cGG(S)^\cC_\sg\to\Sigma_{\sg^{\pm}},\\
1\ra\prod\limits_{i=0}^k\tau_{\gm_i}^{\ZZ^\cC}\to\cGG(S)^\cC_{\vec{\sg}}\to\cGG(S\ssm\sg)^\cC\to 1.
\end{array}\]
\end{theorem}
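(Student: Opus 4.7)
My plan is to derive the first exact sequence tautologically from the action of $\cGG(S)^\cC_\sigma$ on the finite set $\sigma^{\pm}$, and to establish the second by induction on the number $k+1$ of components of the multicurve $\sigma=\{\gamma_0,\ldots,\gamma_k\}$, taking Theorem~\ref{stabilizer} as the base case. For the first sequence, the stabilizer $\cGG(S)^\cC_\sigma$ acts continuously on the finite set $\sigma^{\pm}$ (a continuous map of a profinite group to a finite set), which gives a continuous homomorphism $\cGG(S)^\cC_\sigma\to\Sigma_{\sigma^{\pm}}$. By definition, the kernel is the subgroup fixing each oriented curve individually, that is, $\cGG(S)^\cC_{\vec{\sigma}}$.

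For the inductive step of the second sequence, pick $\gamma_0\in\sigma$ and set $\sigma':=\sigma\ssm\{\gamma_0\}$, viewed as a multicurve on the (possibly disconnected) surface $S\ssm\gamma_0$ (to handle disconnection, I would state the claim to be inductively proved with $\GG(S\ssm\gm_0)$ a direct product over connected components, exactly as in Theorem~\ref{stabilizer}). Since $\cGG(S)^\cC_{\vec{\sigma}}\subset\cGG(S)^\cC_{\vec{\gamma_0}}$, the surjection
\[
q\co\cGG(S)^\cC_{\vec{\gamma_0}}\tura\cGG(S\ssm\gamma_0)^\cC
\]
of Theorem~\ref{stabilizer} restricts to a continuous map out of $\cGG(S)^\cC_{\vec{\sigma}}$. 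The key technical claim is the identity
\[
\cGG(S)^\cC_{\vec{\sigma}}=q^{-1}\bigl(\cGG(S\ssm\gamma_0)^\cC_{\vec{\sigma'}}\bigr),
\]
which I would deduce from Theorem~\ref{isomorphism} combined with Proposition~6.5 in \cite{PFT}: the $\cGG(S)^\cC_{\vec{\gamma_0}}$-action on pro-$\cC$ simple closed curves disjoint from $\gamma_0$ factors through $q$, so $f\in\cGG(S)^\cC_{\vec{\gamma_0}}$ fixes each oriented $\gamma_i\in\sigma'$ if and only if $q(f)$ does. Granting this identity, $q$ restricts to a surjection $\cGG(S)^\cC_{\vec{\sigma}}\tura\cGG(S\ssm\gamma_0)^\cC_{\vec{\sigma'}}$ with kernel $\tau_{\gamma_0}^{\ZZ^\cC}$ (surjectivity uses that $\tau_{\gamma_0}$ itself lies in $\cGG(S)^\cC_{\vec{\sigma}}$ because $\gamma_0$ is disjoint from $\sigma'$). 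Splicing with the inductive hypothesis on $(S\ssm\gamma_0,\sigma')$, and using $(S\ssm\gamma_0)\ssm\sigma'=S\ssm\sigma$, yields a short exact sequence $1\to K\to\cGG(S)^\cC_{\vec{\sigma}}\to\cGG(S\ssm\sigma)^\cC\to 1$ in which $K$ is an extension of $\prod_{i=1}^{k}\tau_{\gamma_i}^{\ZZ^\cC}$ by $\tau_{\gamma_0}^{\ZZ^\cC}$.

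The remaining task is to upgrade $K$ to the internal direct product $\prod_{i=0}^{k}\tau_{\gamma_i}^{\ZZ^\cC}$. Because the curves $\gamma_i$ have pairwise disjoint representatives, the twists $\tau_{\gamma_0},\ldots,\tau_{\gamma_k}$ commute in $\GG(S)$, hence in $\cGG(S)^\cC$ by continuity. There is then a well defined continuous homomorphism $(\ZZ^\cC)^{k+1}\to K$ sending the $i$-th standard generator to $\tau_{\gamma_i}$. Its image is dense in $K$ (it contains a topologically generating set), hence surjective by compactness. Injectivity follows by a peeling-off argument: any nontrivial relation $\prod\tau_{\gamma_i}^{a_i}=1$ would project via $q$ to a relation $\prod_{i\geq 1}\tau_{\gamma_i}^{a_i}=1$ in $\cGG(S\ssm\gamma_0)^\cC_{\vec{\sigma'}}$, which by induction forces $a_i=0$ for $i\geq 1$, and then $a_0=0$ follows from the base case of Theorem~\ref{stabilizer}.

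The step I expect to be hardest is the identity $\cGG(S)^\cC_{\vec{\sigma}}=q^{-1}(\cGG(S\ssm\gamma_0)^\cC_{\vec{\sigma'}})$, because it demands a precise compatibility between the cutting surjection $q$ and the action of $\cGG(S)^\cC_{\vec{\gamma_0}}$ on the whole pro-$\cC$ curve complex $L(\hP^\cC)$: the stabilizer in $\cGG(S)^\cC_{\vec{\gamma_0}}$ of an oriented pro-$\cC$ curve $\delta\in L(\hP^\cC)$ disjoint from $\gamma_0$ must coincide with the $q$-preimage of the stabilizer of its image in $L(\hP_{\gamma_0}^\cC)$. This is exactly what the realization of $\hL^\cC$ as a $\cGG(S)^\cC$-completion (Theorem~\ref{isomorphism}) and the closure-of-stabilizer principle (Proposition~6.5 of \cite{PFT}) are designed to deliver, but assembling them cleanly — especially across the Birman-type sequence of Theorem~\ref{stabilizer} — is the technical crux of the argument.
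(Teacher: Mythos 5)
Your proposal is correct, and for most of the statement it follows the same route as the paper: the first sequence is treated as a tautology of the continuous action on the finite set $\sg^{\pm}$, and right exactness together with exactness in the middle of the second sequence are obtained by induction from Theorem~\ref{stabilizer}, exactly as the paper does (your ``key technical claim'' $\cGG(S)^\cC_{\vec{\sg}}=q^{-1}\bigl(\cGG(S\ssm\gm_0)^\cC_{\vec{\sg'}}\bigr)$ is indeed what the combination of Theorem~\ref{isomorphism} and Proposition~6.5 of \cite{PFT} delivers, via the identification of $\cGG(S)^\cC_{\sg}$ with the closure of $\GG(S)_{\sg}$; you only need the routine extra remark that this closure statement passes to the oriented stabilizers, which are open of finite index in the unoriented ones). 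Where you genuinely diverge from the paper is in the proof of left exactness, i.e.\ the injectivity of $(\ZZ^\cC)^{k+1}\to\cGG(S)^\cC_{\vec\sg}$. The paper reduces this to showing that the explicit level structures $\GG^{K_\ell,(m)}$ of \cite{sym} induce the pro-$\cC$ topology on the discrete abelian group $\prod_{i=0}^k\tau_{\gm_i}^{\Z}$, and then invokes the local monodromy computation of Proposition~3.11 in \cite{sym}; this is an external input but has the virtue of exhibiting concrete congruence levels that separate the multitwists. Your peeling-off induction instead deduces injectivity entirely from the already-established kernel computation of Theorem~\ref{stabilizer}: applying $q$ kills the $\tau_{\gm_0}$-coordinate and reduces a relation $\prod\tau_{\gm_i}^{a_i}=1$ to one on the cut surface, whence $a_i=0$ for $i\geq 1$ by induction and $a_0=0$ because $\tau_{\gm_0}^{\ZZ^\cC}\cong\ZZ^\cC$. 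This is sound (each $\gm_i$, $i\geq 1$, stays nonperipheral and nontrivial after cutting along $\gm_0$, since the curves of a multicurve are pairwise nonisotopic, so the induction is well founded) and is more self-contained, at the cost of not producing the explicit separating levels that the paper's argument yields.
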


\begin{proof}Exactness of the upper sequence is obvious. Right exactness and exactness in the middle of the one below both follow from 
Theorem~\ref{stabilizer} and induction. Left exactness is equivalent to the claim that the levels $\GG^{K_\ell,(m)}$ of $\GG(S)$ introduced 
in \cite{sym} induce the pro-$\cC$ topology on the abelian subgroup $\prod_{i=0}^k\tau_{\gm_i}^\Z$ of $\GG(S)$. 
The claim then follows from the explicit description of the local monodromy representation for these level structures given in 
Proposition~3.11 \cite{sym}, where for $K$ we take a $\cC$-open subgroup of $\Pi$ satisfying the hypotheses of 
Lemma~ 3.10 \cite{sym} and for $\ell, m\geq 2$ integers such that $\Z/\ell$ and $\Z/m\in\cC$.
\end{proof}

\section[Centralizers and centers]{Centralizers of pro-$\cC$ Dehn twists and centers of pro-$\cC$ congruence mapping class groups}\label{c&c}
The stabilizer in the mapping class group $\GG(S)$ of a set of isotopy classes of disjoint simple closed curves on $S$ is the centralizer of the product of
the Dehn twists associated to those curves. We will show that this result holds also for the pro-$\cC$ congruence mapping class group $\cGG(S)^\cC$ 
but the proof is much more difficult and requires to develop entirely different tools.
This will allow to determine the centers of pro-$\cC$ congruence mapping class groups and then to provide a Birman exact sequence for them. 

\subsection{A linearization of the complex of pro-$\cC$ curves}\label{linearization}
In this section, we will extend the results of Section~5 in \cite{BZ} to the pro-$\cC$ case.
Let $K$ be an open characteristic subgroup of $\hP^\cC$ and let $p_K\co S_K\ra S$ be the
associated normal unramified covering of Riemann surfaces with covering transformation group
$G_K:=\hP^\cC/K$. Let then $\ol{S}_K$ be the closed Riemann surface obtained from $S_K$
filling in its punctures and, for a commutative unitary ring of coefficients $A$, let $H_1(\ol{S}_K,A)$
be its first homology group. There is then a natural map $\psi_K\co K\ra H_1(\ol{S}_K,A)$.

For a given $\gm\in\hL^\cC$, let us denote by the same letter an element of the profinite group
$\hP^\cC$ in the class of the given profinite simple closed curve and let $\nu_K(\gm)$ be the smallest positive
integer such that $\gm^{\nu_K(\gm)}\in K$. For a profinite multicurve $\sg\in L(\hP^\cC)$, let us
also denote by $\sg$ a subset of $\hP^\cC$ in the class of the given multicurve. We let then
$V_{K,\sg}$ be the primitive $A[G_K]$-submodule of $H_1(\ol{S}_K,A)$ generated by the $G_K$-orbit of
$\{\psi_K(\gm^{\nu_K(\gm)})\}_{\gm\in\sg}$.

Let $\mathrm{Gr}_{G_K}(H_1(\ol{S}_K,A))$ be the absolute Grassmanian of primitive
$A[G_K]$-submodules of the homology group $H_1(\ol{S}_K,A)$, that is to say the disjoint union of the
Grassmanians of primitive, $k$-dimensional, $A[G_K]$-submodules of
$H_1(\ol{S}_K,A)$, for all $1\leq k\leq\mathrm{rank}\,H_1(\ol{S}_K,A)$.

For $A_p$ equal to the ring of $p$-adic integers $\Z_p$ or the finite field
$\F_p$, the absolute Grassmanian $\mathrm{Gr}_{G_K}(H_1(\ol{S}_K,A_p))$ has a natural structure of
profinite space, while, for $A_p=\Q_p$, it is a locally compact totally disconnected Hausdorff space.
In all cases, for $\sg\in L(\hP^\cC)$, the assignment $\sg\mapsto V_{K,\sg}$ defines
a natural continuous $\cGG(S)^\cC$-equivariant map:
\[\Psi_{K,p}\co L(\hP^\cC)\to\mathrm{Gr}_{G_K}(H_1(\ol{S}_K,A_p)).\]

The following theorem is then a straightforward generalization of Theorem~5.6 in \cite{BZ}, where, in the proof, we just need to replace the word
"open" by "$\cC$-open":

\begin{theorem}\label{embedding}For $p>0$ a prime number, let $A_p=\F_p$, $\Z_p$ or $\Q_p$.
There is a natural continuous $\cGG(S)^\cC$-equivariant injective map:
\[\widehat{\Psi}_p:=\prod_{K\unlhd_\cC\hP^\cC}\Psi_{K,p}\co L(\hP^\cC)\hookra\prod_{K\unlhd_\cC\hP^\cC}\mathrm{Gr}_{G_K}(H_1(\ol{S}_K,A_p)).\]
\end{theorem}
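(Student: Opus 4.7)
The plan is to follow the argument of Theorem~5.6 in \cite{BZ} verbatim, substituting ``$\cC$-open'' for ``open'' wherever a finite-index characteristic subgroup of $\hP^\cC$ is invoked, exactly as the author indicates. Continuity and $\cGG(S)^\cC$-equivariance of $\widehat{\Psi}_p$ transfer for free from the factors $\Psi_{K,p}$, since a product of continuous equivariant maps into a product is continuous and equivariant. So the genuine content lies in injectivity: given distinct simplices $\sg,\sg'\in L(\hP^\cC)$, one has to exhibit some $K\unlhd_\cC\hP^\cC$ for which $V_{K,\sg}\neq V_{K,\sg'}$ inside $\mathrm{Gr}_{G_K}(H_1(\ol{S}_K,A_p))$.

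I would split this into two steps. First, a reduction to vertices: if $|\sg|\neq|\sg'|$, then for any $K$ deep enough in the pro-$\cC$ topology, the classes $\psi_K(\gm^{\nu_K(\gm)})$ attached to the vertices of a given simplex become linearly independent in $H_1(\ol{S}_K,A_p)$, so $V_{K,\sg}$ and $V_{K,\sg'}$ have different ranks. If instead $|\sg|=|\sg'|$, then some vertex $\gm$ of $\sg$ is absent from $\sg'$, and it suffices to separate $\gm$ from every vertex of $\sg'$. Second, the vertex-separation step: for distinct $\gm,\gm'\in\hL^\cC$, the closed procyclic subgroups $\langle\gm\rangle$ and $\langle\gm'\rangle$ of $\hP^\cC$ are not conjugate up to inversion, thanks to conjugacy $\cC$-separability of surface groups combined with the definition of $\hL^\cC$ as the closure of $\iota(\cL)$ inside $\cP_2(\hP^\cC/\!\sim)$ (and to Lemma~\ref{central} to control normalizers). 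One then follows \cite{BZ} to choose $K\unlhd_\cC\hP^\cC$ so fine that (a) chosen lifts of $\gm$ and $\gm'$ to $S_K$ are non-conjugate embedded loops, and (b) their homology classes in $H_1(\ol{S}_K,A_p)$ generate distinct primitive $A_p[G_K]$-submodules; (b) is handled by the usual test-curve / intersection-pairing argument in a sufficiently abelian $\cC$-cover.

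The main obstacle is verifying that the family of $\cC$-open characteristic subgroups of $\hP^\cC$ is rich enough to execute step (b), in place of the arbitrary finite-index characteristic subgroups used in \cite{BZ}. This is precisely what the standing convention that $\cC$ contains a nontrivial group provides: together with closure of $\cC$ under extensions, it forces $(p)\subseteq\cC$ for some prime $p$, so every $p$-elementary abelian characteristic quotient of $\hP^\cC$ is $\cC$-open and all the finite covers needed by the intersection-pairing argument of \cite{BZ} remain available as $\cC$-covers. Granting this, the proof of Theorem~5.6 of \cite{BZ} applies word-for-word and injectivity of $\widehat{\Psi}_p$ follows.
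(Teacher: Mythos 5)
Your proposal is essentially identical to the paper's own proof, which consists precisely of the remark that one repeats the argument of Theorem~5.6 in \cite{BZ} with ``open'' replaced by ``$\cC$-open'' throughout; your additional elaboration of the injectivity step and of why the family of $\cC$-open characteristic subgroups is rich enough (via $(p)\subseteq\cC$ for some prime $p$) is a reasonable unpacking of what that substitution entails. The only caveat worth noting is that the prime $p$ for which $(p)\subseteq\cC$ need not coincide with the coefficient prime of $A_p$, so one should take care that the covers invoked in the separation step of \cite{BZ} are genuinely $\cC$-covers rather than specifically elementary abelian $p$-covers for the coefficient prime.
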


\subsection{Centralizers of pro-$\cC$ multitwists}\label{centralizers}
The set $\cL$ of isotopy classes of nonperipheral simple closed curves on $S$ parametrizes the set of Dehn
twists of $\GG(S)$, which is the standard set of generators for this group. The assignment $\gm\mapsto\tau_\gm$, for $\gm\in\cL$, 
defines an embedding $d\co\cL\hookra\GG(S)$, thus relating, in an immediate way, the elementary topology of $S$ with the group structure of $\GG(S)$.

The set $\{\tau_\gm\}_{\gm\in\cL}$ of all Dehn twists of $\GG(S)$ is closed under conjugation and falls in a finite set
of conjugacy classes which are in bijective correspondence with the possible topological types of the Riemann surface
$S\ssm\gm$. The set of \emph{pro-$\cC$ Dehn twists} is defined
to be the closure of the image of the set $\{\tau_\gm\}_{\gm\in\cL}$ inside $\cGG(S)^\cC$. This is the same as the union of the conjugacy
classes in $\cGG(S)^\cC$ of the images of the Dehn twists of $\GG(S)$.

The composition of the embedding $d$ with the natural monomorphism $\GG(S)\hookra\cGG(S)^\cC$ is a $\GG(S)$-equivariant map 
$\cL\hookra\cGG(S)^\cC$, where we let $\GG(S)$ act by conjugation on $\cGG(S)^\cC$. From the universal property of the $\cGG(S)^\cC$-completion 
and Theorem~\ref{isomorphism}, it then follows that this map extends to a continuous $\cGG(S)^\cC$-equivariant map
$\hat{d}^\cC\co\hL^\cC\ra\cGG(S)^\cC$, whose image is the set of pro-$\cC$ Dehn twists. We then let $\tau_\gm:=\hat{d}^\cC(\gm)$, for all $\gm\in\cL^\cC$.
Similarly, to a multicurve $\sg=\{\gm_1,\ldots,\gm_s\}\in\cL^\cC$ and a multi-index $(h_1,\ldots,h_s)\in(\ZZ^\cC)^s$, we associate the 
\emph{pro-$\cC$ multitwist} $\tau_{\gm_1}^{h_1}\cdot\ldots\cdot\tau_{\gm_s}^{h_s}$. 

In this section, we want to extend to pro-$\cC$ congruence mapping class groups Theorem~6.1 of \cite{BZ} and then its Corollary~6.2, 
which describes the centralizer of a profinite multitwist. We will actually prove a stronger version of those results:

\begin{theorem}\label{multitwists}Let $\sg=\{\gm_1,\ldots,\gm_s\}$ and
$\sg'=\{\delta_1,\ldots,\delta_t\}$ be two pro-$\cC$ multicurves on the surface $S$.
Suppose that, for multi-indices $(h_1,\ldots,h_s)\in m_\sg\cdot(\Z\ssm\{0\})^s$ and $(k_1,\ldots,k_t)\in m_{\sg'}\cdot(\Z\ssm\{0\})^t$, 
where $m_\sg,m_{\sg'}\in(\ZZ^\cC)^\ast$, there is an identity:
\[\tau_{\gm_1}^{h_1}\cdot\ldots\cdot\tau_{\gm_s}^{h_s}=
\tau_{\delta_1}^{k_1}\cdot\ldots\cdot\tau_{\delta_t}^{k_t}\in\cGG(S)^\cC.\]
Then, we have:
\begin{enumerate}
\item $t=s$;
\item there is a permutation $\phi\in\Sigma_s$ such that $\delta_i=\gm_{\phi(i)}$ and $k_i=h_{\phi(i)}$,
for $i=1,\ldots,s$.
\end{enumerate}
\end{theorem}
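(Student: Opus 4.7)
The plan is to reduce the identity of pro-$\cC$ multitwists in $\cGG(S)^\cC$ to a rigidity statement about sums of symplectic transvections on the first homology of $\cC$-characteristic covers of $S$, following the strategy of Theorem~6.1 in \cite{BZ} inside the linearized framework provided by Theorem~\ref{embedding}. The key mechanism is that, for a $\cC$-open characteristic $K\lhd_\cC\hP^\cC$, the pro-$\cC$ multitwist $\tau_{\gm_1}^{h_1}\cdot\ldots\cdot\tau_{\gm_s}^{h_s}$ acts on $H_1(\ol{S}_K,\Q_p)$ as a sum of commuting rank-one transvections along the $G_K$-orbits of the vectors $\psi_K(\gm_i^{\nu_K(\gm_i)})$, with scalars proportional to $h_i/\nu_K(\gm_i)$; in particular, its image lies in $V_{K,\sg}$.

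First, I would fix a prime $p$ with $\Z/p\in\cC$ and let $K$ range over a cofinal system of $\cC$-open characteristic subgroups of $\hP^\cC$. From the assumed identity in $\cGG(S)^\cC$, the two induced symplectic automorphisms of $H_1(\ol{S}_K,\Q_p)$ coincide; reading off the image of the nilpotent part yields $V_{K,\sg}=V_{K,\sg'}$ for every such $K$. By the $\cGG(S)^\cC$-equivariant injectivity of $\widehat{\Psi}_p$ in Theorem~\ref{embedding}, varying $K$ then forces $\sg=\sg'$ as pro-$\cC$ multicurves; in particular $s=t$ and there is a permutation $\phi\in\Sigma_s$ with $\de_i=\gm_{\phi(i)}$. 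Matching the scalars on the rank-one summands attached to each individual curve at every level $K$ produces the equalities $k_{\phi(i)}/\nu_K(\gm_{\phi(i)})=h_i/\nu_K(\gm_i)$ in $\Q_p$, hence $k_{\phi(i)}=h_i$ in $\Q_p$; assembling across primes and passing to the inverse limit over $K$ gives $k_{\phi(i)}=h_i$ in $\ZZ^\cC$. The hypothesis $(h_i)\in m_\sg\cdot(\Z\ssm\{0\})^s$ and $(k_j)\in m_{\sg'}\cdot(\Z\ssm\{0\})^t$, with $m_\sg,m_{\sg'}\in(\ZZ^\cC)^\ast$, is used to keep all scalars nonzero throughout and to let the unit factors $m_\sg,m_{\sg'}$ cancel cleanly across the cofinal family of $K$.

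The main obstacle I expect is establishing the unique decomposability of the transvection sum at each level: namely, that for sufficiently deep $\cC$-covers the $G_K$-orbits of distinct $\psi_K(\gm_i^{\nu_K(\gm_i)})$ span linearly independent subspaces of $H_1(\ol{S}_K,\Q_p)$, so that different components of $\sg\cup\sg'$ contribute independent rank-one summands and both the underlying curves and their multiplicities can be unambiguously recovered. This independence is precisely the content of Theorem~\ref{embedding} applied inductively to sub-multicurves of $\sg\cup\sg'$, and it is where the hypothesis $\Z/p\in\cC$ intervenes, via conjugacy $p$-separability of $\Pi$. A secondary subtlety is the interplay among the coefficient choices $A_p=\F_p,\Z_p,\Q_p$: the primitivity condition defining $V_{K,\sg}$ is most naturally imposed over $\Z_p$, while the transvection decomposition is cleanest over $\Q_p$; both must be used in tandem, as in \cite{BZ}.
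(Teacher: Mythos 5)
Your overall strategy---linearize via Theorem~\ref{embedding} and recover $V_{K,\sg}$ from the homological action of the multitwist---is the same as the paper's, but the step on which everything hinges is asserted rather than proved, and as stated it is not correct. You claim $V_{K,\sg}$ can be read off as the image of the nilpotent part of the induced automorphism of $H_1(\ol{S}_K,\Q_p)$. That nilpotent part is $N(v)=\sum_i\nu_i\sum_{\td{\gm}/\gm_i}\langle[\td{\gm}],v\rangle\,[\td{\gm}]$, and its image is only \emph{contained} in $V_{K,\sg}\otimes\Q_p$: the classes $[\td{\gm}]$ are linearly dependent (the preimages of each $\gm_i$ disconnect $S_K$, and curves lying over distinct $\gm_i$ may jointly bound), and since the exponents $h_i$ are allowed to have mixed signs, cancellation can make $\operatorname{Im}(N)$ a proper subspace of $V_{K,\sg}$ (a bounding pair with opposite exponents already acts trivially on homology of the base surface, so containment is genuinely not equality in general). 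You do flag ``unique decomposability of the transvection sum'' as the main obstacle, but your proposed fix---``Theorem~\ref{embedding} applied inductively to sub-multicurves''---does not address it: that theorem is an injectivity statement about the assignment $\sg\mapsto(V_{K,\sg})_K$ on the curve complex and says nothing about linear independence of the cycles $[\td{\gm}]$ inside a single $H_1(\ol{S}_K)$.

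The paper closes this gap differently. It first reduces to $\sg\in C(S)$ and to an \emph{integral} multitwist: one picks an open normal $U\leq\cGG(S)^\cC$ with $U\cap G_K=\{1\}$, uses that the $U$-orbit of $\sg$ is open in its $\cGG(S)^\cC$-orbit to move $\sg$ into $C(S)$, and uses the hypothesis $(h_1,\ldots,h_s)\in m_\sg\cdot(\Z\ssm\{0\})^s$ to find $m\in\ZZ^\cC$ with $\tau_{\gm_1}^{mh_1}\cdots\tau_{\gm_s}^{mh_s}\in U\cap\GG(S)$ --- this, not ``keeping scalars nonzero,'' is the real role of that hypothesis. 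Then $V_{K,\sg}^\Q$ is recovered not as an image but as the \emph{core} of the quadratic form $\log\rho_U(\tau_{\gm_1}^{mh_1}\cdots\tau_{\gm_s}^{mh_s})$ via Lemma~5.11 of \cite{faith} when this form is semidefinite, and in general by splitting $H_1(\ol{S}_K,\Q)$ into nondegenerate subspaces $W^{\pm}$ on which the restrictions are semidefinite, which is possible precisely because the preimage curves disconnect $S_K$. Note that ``semidefinite'' forces one to work over $\Q$ rather than $\Q_p$, which is a further reason your all-$\Q_p$ setup cannot be carried through as written. Your final scalar-matching step is fine in spirit once $\sg=\sg'$ is known, but it too rests on the independence you have not established; in the paper it is covered by the freeness of $\prod_i\tau_{\gm_i}^{\ZZ^\cC}$ in Theorem~\ref{stab pro-curves}.
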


\begin{proof}By Theorem~\ref{embedding}, it is enough to show that, for a proper characteristic $\cC$-open subgroup $K$ of $\hP^\cC$, 
a pro-$\cC$ multitwist $\tau_{\gm_1}^{h_1}\cdot\ldots\cdot\tau_{\gm_s}^{h_s}$ determines the $\Q_p$-subspace $V_{K,\sg}$ of $H_1(\ol{S}_K,\Q_p)$,
where $\sg=\{\gm_1,\ldots,\gm_s\}$. 

Let $N_{\cGG(S_K)^\cC}(G_K)$ be the normalizer of the finite group $G_K$ in the pro-$\cC$ congruence mapping class group $\cGG(S_K)$.
There is an exact sequence (cf.\ Section~2 in \cite{sym}. Note that here we are considering pure mapping class groups):
\[1\to G_K\to N_{\cGG(S_K)^\cC}(G_K)\to\cGG(S)^\cC,\]
where the image of $N_{\cGG(S_K)^\cC}(G_K)$ in $\cGG(S)^\cC$ is an open normal subgroup.

Let $U$ be an open characteristic subgroup of $N_{\cGG(S_K)^\cC}(G_K)$ such that $U\cap G_K=\{1\}$. Then, $U$
identifies with an open normal subgroup of $\cGG(S)^\cC$. Since the $U$-orbit of $\sg$ is open inside its $\cGG(S)^\cC$-orbit, the $U$-orbit of $\sg$
contains an element in the image of $C(S)$. There is then an $f\in U$ such that $f(\sg)\in C(S)$ and, for such element, there holds
$V_{K,f(\sg)}=\bar f(V_{K,\sg})$, where $\bar f$ is the image of $f$ in the symplectic group $\Sp(H_1(\ol{S}_K,\Q_p))$. Therefore, in order to prove
the above claim, it is enough to consider the case when $\sg\in C(S)\subset L(\hP^\cC)$.

There is an $r\in\N^+$ such that, for every simple closed curve $\gm$ on $S$, we have that $\tau_\gm^r\in U\cap\GG(S)$. 
From our hypothesis on the multi-index $(h_1,\ldots,h_s)$, it follows that there is an $m\in\ZZ^\cC$ such that $mh_i$ is 
a positive integer divisible by $r$, for all $i=1,\ldots,s$, and then such that $\tau_{\gm_1}^{mh_1}\cdot\ldots\cdot\tau_{\gm_s}^{mh_s}\in U\cap\GG(S)$.

Since $U\cap G_K=\{1\}$, the subgroup $U$ is contained in the centralizer $Z_{\cGG(S_K)^\cC}(G_K)$ of $G_K$ in $\cGG(S_K)$ and so
there is a natural representation $\rho_U\co U\cap\GG(S)\to Z_{\Sp(H_1(\ol{S}_K,\Q))}(G_K)$. 

In order to prove our claim and the theorem, it is then
enough to show that we can recover the subspace $V_{K,\sg}^\Q:=V_{K,\sg}\cap H_1(\ol{S}_K,\Q)$ of $H_1(\ol{S}_K,\Q)$ from the multitransvection
$\rho_U(\tau_{\gm_1}^{mh_1}\cdot\ldots\cdot\tau_{\gm_s}^{mh_s})$. The logarithm of this multitransvection is the symmetric bilinear form on 
$H_1(\ol{S}_K,\Q)$ (cf.\ Section~1 in \cite{Prym}) defined by:
\[\log(\rho_U(\tau_{\gm_1}^{mh_1}\cdot\ldots\cdot\tau_{\gm_s}^{mh_s}))=\sum_{i=1}^s\nu_i
\sum_{\td{\gm}/\gm_i}\langle[\td{\gm}],\_\rangle_{\ol{S}_K}^2\in\sym^2(H^1(\ol{S}_K,\Q)),\]
where the second sum runs over the simple closed curves $\td{\gm}$ on $S_K$ which cover $\gm_i$, we let $\nu_i:=mh_i/\deg(\td{\gm}/\gm_i)$, for 
$i=1,\ldots,s$, and we denote by $[\td{\gm}]$ a cycle in $H_1(\ol{S}_K,\Q)$ supported on $\td{\gm}$ and by $\langle\_,\_\rangle_{\ol{S}_K}$ 
the standard symplectic bilinear form on $H_1(\ol{S}_K,\Q)$. 

In Lemma~5.11 of \cite{faith}, we showed that, when $\log(\rho_U(\tau_{\gm_1}^{mh_1}\cdot\ldots\cdot\tau_{\gm_s}^{mh_s}))$ is a semidefinite 
symmetric bilinear form (e.g.\ $\nu_i\in\N^+$, for all $i=1,\ldots,s$), it is possible to recover the subspace $V_{K,\sg}^\Q$ as the \emph{core} 
(cf.\ ibidem) of this form. 

However, it is not difficult to show that this is
true also without that assumption. Indeed, for $K$ a proper characteristic subgroup of $\hP^\cC$, the set $\{\td{\gm}\}$ of simple closed curves
lying over a fixed $\gm_i$ disconnects the surface $S_K$, for every $i=1,\ldots,s$. There are then subspaces $W^+$ and $W^-$ of $H_1(\ol{S}_K,\Q)$, 
nondegenerate for the symplectic form $\langle\_,\_\rangle_{\ol{S}_K}$, with the property that 
$V_{K,\sg}^\Q=(W^+\cap V_{K,\sg}^\Q)+(W^-\cap V_{K,\sg}^\Q)$ and such that 
the restriction of  $\log(\rho_U(\tau_{\gm_1}^{mh_1}\cdot\ldots\cdot\tau_{\gm_s}^{mh_s}))$ to $W^+$ and $W^-$ is semidefinite. The same argument of 
Lemma~5.11 of \cite{faith} then applies to show that we can recover the subspaces $V_{K,\sg}\cap W^\pm$ from such restriction and hence the subspace 
$V_{K,\sg}^\Q$ as their sum.
\end{proof}

From the identity $f\cdot(\tau_{\gm_1}^{h_1}\cdot\ldots\cdot\tau_{\gm_k}^{h_k})\cdot f^{-1}=
\tau_{f(\gm_1)}^{h_1}\cdot\ldots\cdot\tau_{f(\gm_k)}^{h_k}$, it then follows at once:

\begin{corollary}\label{centralizers multitwists}
Let $\sg=\{\gm_1,\ldots,\gm_s\}$ be a pro-$\cC$ multicurve on $S$ and
$(h_1,\ldots,h_k)\in m_\sg\cdot(\Z\ssm\{0\})^k$ a multi-index, where $m_\sg\in(\ZZ^\cC)^\ast$. Then, we have
\[Z_{\cGG(S)^\cC}(\tau_{\gm_1}^{h_1}\cdot\ldots\cdot\tau_{\gm_k}^{h_k})
= N_{\cGG(S)^\cC}(\langle \tau_{\gm_1}^{h_1}\cdot\ldots\cdot\tau_{\gm_k}^{h_k}\rangle)
=N_{\cGG(S)^\cC}(\langle \tau_{\gm_1},\ldots,\tau_{\gm_k}\rangle)=\cGG(S)^\cC_\sg,\]
where $\cGG(S)^\cC_\sg$ is the stabilizer of $\sg$ described in Theorem~\ref{stab pro-curves}.
\end{corollary}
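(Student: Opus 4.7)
The plan is to derive Corollary~\ref{centralizers multitwists} as a direct consequence of the uniqueness of multitwist decompositions established in Theorem~\ref{multitwists}, combined with the elementary conjugation identity
\[
f\cdot(\tau_{\gm_1}^{h_1}\cdots\tau_{\gm_k}^{h_k})\cdot f^{-1}=\tau_{f(\gm_1)}^{h_1}\cdots\tau_{f(\gm_k)}^{h_k},
\]
which is visible at the level of $\GG(S)$ and extends to $\cGG(S)^\cC$ by continuity, using the $\cGG(S)^\cC$-equivariance of the map $\hat d^\cC\co\hL^\cC\to\cGG(S)^\cC$. Setting $\gm:=\tau_{\gm_1}^{h_1}\cdots\tau_{\gm_k}^{h_k}$, I would organise the argument as a cycle of four inclusions among the groups in question.

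The containment $Z_{\cGG(S)^\cC}(\gm)\subseteq N_{\cGG(S)^\cC}(\langle\gm\rangle)$ is formal. For $N_{\cGG(S)^\cC}(\langle\gm\rangle)\subseteq N_{\cGG(S)^\cC}(\langle\tau_{\gm_1},\ldots,\tau_{\gm_k}\rangle)$ I would take $f$ normalizing $\langle\gm\rangle$, so that $f\gm f^{-1}=\gm^{m}$ for some unit $m\in(\ZZ^\cC)^\ast$. The conjugation identity turns this into the equality
\[
\tau_{f(\gm_1)}^{h_1}\cdots\tau_{f(\gm_k)}^{h_k}=\tau_{\gm_1}^{mh_1}\cdots\tau_{\gm_k}^{mh_k}.
\]
Since $(mh_1,\ldots,mh_k)\in(mm_\sg)\cdot(\Z\smallsetminus\{0\})^k$ with $mm_\sg\in(\ZZ^\cC)^\ast$, the hypothesis of Theorem~\ref{multitwists} is preserved, and the theorem yields a permutation $\phi\in\Sigma_k$ with $f(\gm_i)=\gm_{\phi(i)}$. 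Consequently $f\tau_{\gm_i}f^{-1}=\tau_{\gm_{\phi(i)}}\in\langle\tau_{\gm_1},\ldots,\tau_{\gm_k}\rangle$, giving the desired inclusion.

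Next, for $N_{\cGG(S)^\cC}(\langle\tau_{\gm_1},\ldots,\tau_{\gm_k}\rangle)\subseteq\cGG(S)^\cC_\sg$ I would repeat the same trick with trivial multi-index: for $f$ in the normalizer, each $\tau_{f(\gm_i)}=f\tau_{\gm_i}f^{-1}$ admits a decomposition as a multitwist in $\tau_{\gm_1},\ldots,\tau_{\gm_k}$, and comparing with the tautological decomposition $\tau_{f(\gm_i)}=\tau_{f(\gm_i)}^{1}$ through Theorem~\ref{multitwists} forces $f(\gm_i)\in\{\gm_1,\ldots,\gm_k\}$, so that $f$ stabilises $\sg$ as a simplex of $L(\hP^\cC)$.

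To close the cycle with $\cGG(S)^\cC_\sg\subseteq Z_{\cGG(S)^\cC}(\gm)$, I would note that any $f\in\cGG(S)^\cC_\sg$ induces a permutation of the $\gm_i$'s; the conjugation identity then exhibits $f\gm f^{-1}$ as a reshuffling of the same factors of $\gm$, and Theorem~\ref{multitwists} identifies the two multitwist expressions, yielding $f\gm f^{-1}=\gm$. The main obstacle I anticipate is the bookkeeping around the unit scaling factor $m\in(\ZZ^\cC)^\ast$ appearing in the normalizer step, ensuring at each stage that the scaled multi-index still satisfies the hypothesis $(mh_1,\ldots,mh_k)\in m'\cdot(\Z\smallsetminus\{0\})^k$ with $m'\in(\ZZ^\cC)^\ast$, so that Theorem~\ref{multitwists} is legitimately applicable.
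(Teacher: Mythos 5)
Your overall strategy --- reducing everything to Theorem~\ref{multitwists} via the conjugation identity $f\cdot\tau_\gm\cdot f^{-1}=\tau_{f(\gm)}$ --- is exactly the paper's (its proof consists of that identity followed by ``it follows at once''), and your first two inclusions, $Z_{\cGG(S)^\cC}(\gm)\subseteq N_{\cGG(S)^\cC}(\langle\gm\rangle)$ and then $N_{\cGG(S)^\cC}(\langle\gm\rangle)\subseteq\cGG(S)^\cC_\sg$ via $f\gm f^{-1}=\gm^m$ with $m\in(\ZZ^\cC)^\ast$, are correct, including the bookkeeping of the unit. The step that fails is the one closing your cycle. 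For $f\in\cGG(S)^\cC_\sg$ inducing the permutation $\phi$ on $\{\gm_1,\ldots,\gm_k\}$, the conjugation identity gives $f\gm f^{-1}=\prod_i\tau_{\gm_{\phi(i)}}^{h_i}=\prod_j\tau_{\gm_j}^{h_{\phi^{-1}(j)}}$, and Theorem~\ref{multitwists} does not ``identify'' this with $\gm=\prod_j\tau_{\gm_j}^{h_j}$; its uniqueness statement says the opposite, namely that the two multitwists coincide if and only if $h_{\phi^{-1}(j)}=h_j$ for all $j$. So whenever some $f\in\cGG(S)^\cC_\sg$ moves a curve to one carrying a different exponent (e.g.\ $\sg=\{\gm_1,\gm_2\}$ with $h_1\neq h_2$ and $f$ swapping the two curves, which is realizable for suitable $\sg$ --- compare the bounding pair $\tau_{\gm^+}\tau_{\gm^-}^{-1}$ in Corollary~\ref{faithful}), $f$ stabilizes $\sg$ but does not centralize $\gm$, and the inclusion $\cGG(S)^\cC_\sg\subseteq Z_{\cGG(S)^\cC}(\gm)$ is false. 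What the argument actually delivers is $\cGG(S)^\cC_{\vec{\sg}}\subseteq Z_{\cGG(S)^\cC}(\gm)\subseteq\cGG(S)^\cC_\sg$, with $Z_{\cGG(S)^\cC}(\gm)$ equal to the subgroup of $\cGG(S)^\cC_\sg$ whose induced permutation preserves the exponent function $i\mapsto h_i$; the full chain of equalities holds as stated only under an extra hypothesis such as $h_1=\cdots=h_k$. This imprecision is already present in the paper's statement and is hidden by its one-line proof, but your write-up asserts the false inclusion rather than flagging it.

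A second, smaller gap is in your step $N_{\cGG(S)^\cC}(\langle\tau_{\gm_1},\ldots,\tau_{\gm_k}\rangle)\subseteq\cGG(S)^\cC_\sg$: there you compare $\tau_{f(\gm_i)}$ with a decomposition $\prod_j\tau_{\gm_j}^{a_j}$ whose exponents $a_j$ are arbitrary elements of $\ZZ^\cC$, whereas Theorem~\ref{multitwists} only applies when both multi-indices lie in $m\cdot(\Z\ssm\{0\})^t$ for units $m$; that hypothesis is used essentially in its proof (to produce $m$ with all $mh_i$ positive integers divisible by $r$), so the theorem cannot be invoked for such $a_j$ without first extending it. This particular containment can be routed around: $\cGG(S)^\cC_\sg\subseteq N_{\cGG(S)^\cC}(\langle\tau_{\gm_1},\ldots,\tau_{\gm_k}\rangle)$ is immediate since $f$ permutes the twists, and together with $N_{\cGG(S)^\cC}(\langle\gm\rangle)\subseteq\cGG(S)^\cC_\sg$ the only missing link is precisely the problematic containment discussed above.
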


\subsection{The center of the pro-$\cC$ congruence mapping class group}\label{center}
\begin{theorem}\label{centerfree}Let $S=S_{g,n}$ be a surface of negative Euler characteristic. For every open subgroup $U$ of 
$\cGG(S)^\cC$, we then have:
\[Z_{\cGG(S)^\cC}(U)=Z(\cGG(S)^\cC)=Z(\GG(S)).\]
Thus, all these groups are trivial for $(g,n)\neq(1,1),(2,0)$. Otherwise, they are 
generated by the hyperelliptic involution. 
\end{theorem}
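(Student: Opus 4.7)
The containments $Z(\GG(S)) \subseteq Z(\cGG(S)^\cC) \subseteq Z_{\cGG(S)^\cC}(U)$ are immediate: the first because any element of $Z(\GG(S))$ centralises the dense subgroup $\GG(S)$ of $\cGG(S)^\cC$ and hence, by continuity of conjugation, all of $\cGG(S)^\cC$; the second because $U \subseteq \cGG(S)^\cC$. So the work lies in proving $Z_{\cGG(S)^\cC}(U) \subseteq Z(\GG(S))$. Fix $f \in Z_{\cGG(S)^\cC}(U)$. Since $U$ is open in $\cGG(S)^\cC$, the intersection $U \cap \GG(S)$ has finite index in $\GG(S)$, so for every isotopy class $\gm$ of nonperipheral simple closed curve on $S$ some positive power $\tau_\gm^{n_\gm}$ lies in $U$, and Corollary~\ref{centralizers multitwists} identifies $Z_{\cGG(S)^\cC}(\tau_\gm^{n_\gm})$ with $\cGG(S)^\cC_\gm$. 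Since a Dehn twist depends only on the unoriented isotopy class of its core, every element of $\cGG(S)^\cC_\gm$ commutes with $\tau_\gm$ itself. Thus $f$ commutes with every Dehn twist; as these generate $\GG(S)$ and $\GG(S)$ is dense in $\cGG(S)^\cC$, we conclude $f \in Z(\cGG(S)^\cC)$.

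Next I show $f^2 = 1$. Pick a pants decomposition $P$ of $S$. Iterated application of Theorem~\ref{stabilizer}, together with the triviality of the pure mapping class group of a thrice-punctured sphere, yields $\cGG(S)^\cC_{\vec P} \cong (\ZZ^\cC)^{|P|}$; by Theorem~\ref{stab pro-curves} the finite quotient $\cGG(S)^\cC_P/\cGG(S)^\cC_{\vec P}$ embeds in $\Sigma_{P^\pm}$. Since $f$ fixes each $\gm \in P$ individually, its image in $\Sigma_{P^\pm}$ preserves each pair $\{\gm^+,\gm^-\}$ and therefore lies in the subgroup $(\Z/2)^{|P|}$ of orientation flips. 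Hence $f^2 \in \cGG(S)^\cC_{\vec P}$ is a pro-$\cC$ multitwist supported on $P$. Running the same argument for a second pants decomposition $P'$ disjoint from $P$ (which exists whenever $S$ is not a pair of pants), Theorem~\ref{multitwists}—or more fundamentally the Grassmannian linearisation of Theorem~\ref{embedding} underlying its proof—forces the nonzero support of $f^2$ to be contained in $P \cap P' = \emptyset$. Therefore $f^2 = 1$.

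It remains to prove $f \in Z(\GG(S))$. The symplectic representation $\cGG(S)^\cC \to \Sp(H_1(S,\ZZ^\cC))$ sends $f$ to an element $\bar f$ commuting with $\Sp_{2g}(\Z)$, hence by density with all of $\Sp_{2g}(\ZZ^\cC)$, so $\bar f = \pm\mathrm{Id}$. In the cases $(g,n) = (1,1), (2,0)$ the hyperelliptic involution $\iota \in \GG(S)$ realises $-\mathrm{Id}$; after possibly replacing $f$ by $f\iota$, we may assume $\bar f = \mathrm{Id}$. The main obstacle is then to show that a central involution of $\cGG(S)^\cC$ acting trivially on homology is trivial. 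I would attack this by descending $f$ through the pro-$\cC$ Birman exact sequence developed in Section~\ref{c&c} in order to reduce to a surface of strictly lower complexity, and then induct. Alternatively, one may invoke an Ivanov-type rigidity theorem for $\out(\hP^\cC)$: by continuity and density of $C(S)$ in $L(\hP^\cC)$, $f$ in fact fixes every pro-$\cC$ simple closed curve, and rigidity then forces $f$ to act as an inner automorphism of $\hP^\cC$; since non-abelian surface groups have trivial pro-$\cC$ centre by Lemma~\ref{central}, this inner automorphism is trivial, whence $f = 1$ in $\cGG(S)^\cC \hookrightarrow \out(\hP^\cC)$.
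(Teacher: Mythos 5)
Your first step --- showing $Z_{\cGG(S)^\cC}(U)=Z(\cGG(S)^\cC)$ by putting powers of the generating Dehn twists inside $U$ and applying Corollary~\ref{centralizers multitwists} --- is correct and is exactly the paper's argument. The rest of the proof, however, has a genuine gap at its crux. After reducing (via the symplectic representation and multiplication by $\iota$ in the exceptional cases) to a central element acting trivially on homology, you explicitly defer the remaining work to one of two strategies, neither of which is carried out. The ``Ivanov-type rigidity theorem for $\out(\hP^\cC)$'' you invoke is not available in this paper or in the literature it cites: the assertion that an element of $\cGG(S)^\cC$ fixing every pro-$\cC$ simple closed curve must act as an inner automorphism (equivalently, that the kernel of the $\cGG(S)^\cC$-action on $L(\hP^\cC)$ is the center) is essentially equivalent to the theorem being proved, so this route is circular. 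Your other suggestion --- descend to a surface of lower complexity and induct --- is indeed the paper's strategy, but all the content lies in executing it: one cuts along a suitable curve $\gm$, checks that $Z(\cGG(S)^\cC)$ meets $\tau_\gm^{\ZZ^\cC}$ trivially and hence injects into $Z(\cGG(S\ssm\gm)^\cC)$ via the exact sequence of Theorem~\ref{stabilizer}, chooses $\gm$ appropriately in each of the cases $g=0$, $(g,n)=(1,1)$, $g=1,n\geq 2$, $(2,0)$, $g=2,n\geq 1$, $g\geq 3$ (for $g=2,n\geq 1$ one needs a separating curve bounding an unpunctured genus-$1$ piece, and a half-twist with $\tau_{1/2}^2=\tau_\gm$ appears), and runs inductions on $n$ and on $g$ starting from the trivial group $\GG_{0,3}$. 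None of this is present in your proposal.

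A secondary problem is your $f^2=1$ step. Theorem~\ref{multitwists} and Corollary~\ref{centralizers multitwists} are deliberately stated only for exponent vectors in $m\cdot(\Z\ssm\{0\})^s$ with $m\in(\ZZ^\cC)^\ast$: the proof goes through the logarithm of a rational multitransvection. An arbitrary element of $\cGG(S)^\cC_{\vec P}\cong(\ZZ^\cC)^{|P|}$ need not have exponents of this form, so the results of the paper do not give you a well-defined ``support'' of $f^2$ that can be compared across two pants decompositions. (Also, two distinct pants decompositions are never disjoint as multicurves --- you presumably mean that they share no curve --- and in any case the claim $f^2=1$ is never needed in the paper's argument.)
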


\begin{proof}Let us prove the identity $Z_{\cGG(S)^\cC}(U)=Z(\cGG(S)^\cC)$. Since $\cGG(S)^\cC$ is topologically generated by
a finite set of Dehn twists $\{\tau_\gm\}_{\gm\in I}$ and there is an $h_U\in\N^+$ such that $\tau_\gm^{h_U}\in U$, for all $\gm\in I$,
by Corollary~\ref{centralizers multitwists}, there is a series of identities and an inclusion:
\[Z(\cGG(S)^\cC)=Z(\langle\tau_\gm\rangle_{\gm\in I})=Z(\langle\tau_\gm^{h_U}\rangle_{\gm\in I})\supseteq Z_{\cGG(S)^\cC}(U).\]
The other inclusion $Z(\cGG(S)^\cC)\subseteq Z_{\cGG(S)^\cC}(U)$ is trivial, hence the conclusion follows.

In order to prove the identity $Z(\cGG(S)^\cC)=Z(\GG(S))$, we need to consider first the case $g=0$. We treat this case by induction on 
$n\geq 3$. The case $n=3$ is trivial because, for $n=3$, the group $\GG(S)$ is trivial. Let us then assume that the theorem holds for $3\leq n\leq k$ 
and let us prove it for $n=k+1$. Let $\gm$ be a simple closed curve on $S$ which separates two of the punctures on $S$ from the others. Then,
we have $Z(\cGG(S)^\cC)\subseteq Z_{\cGG(S)^\cC}(\tau_\gm)$. Since $Z(\GG(S))\cap\langle\tau_\gm\rangle=\{1\}$ and 
$\cGG(S)^\cC_\gm=\cGG(S)^\cC_{\vec\gm}$ (because we are considering pure mapping class groups), by Corollary~\ref{centralizers multitwists}, 
the center $Z(\cGG(S)^\cC)$ embeds in the center of $\cGG(S\ssm\gm)^\cC$. Now, the induction hypothesis implies that the latter is trivial and 
so the conclusion follows.

For $g=1,n=1$, we have to prove that $Z(\cGG(S)^\cC)=Z(\GG(S))=\langle\iota\rangle$, where $\iota$ is the elliptic involution. Let $\gm$ be a nonseparating 
closed curve on $S$. Then, by Corollary~\ref{centralizers multitwists}, $Z(\cGG(S)^\cC)\subseteq Z(\cGG^\cC_\gm)$. Given $f\in Z(\cGG(S)^\cC)$, possibly
after replacing $f$ with $f\cdot\iota$, we can assume that actually $f\in\cGG^\cC_{\vec\gm}$. But then, since 
$Z(\GG(S))\cap\langle\tau_\gm\rangle=\{1\}$, the element $f$ can be identified with an element of the center of $\cGG(S\ssm\gm)^\cC$, which, by
the genus $0$ case, is trivial. Therefore, $f$ is either trivial or equal to $\iota$, which proves the claim.

For $g=1,n\geq 2$, we have to prove that $Z(\cGG(S)^\cC)=Z(\GG(S))=\{1\}$. Let again $\gm$ be a nonseparating closed curve on $S$. Then, 
by Corollary~\ref{centralizers multitwists}, $Z(\cGG(S)^\cC)\subseteq Z(\cGG^\cC_\gm)=Z(\cGG^\cC_{\vec\gm})$. As above, by the genus $0$ case, 
this implies that the center of $\cGG(S)^\cC$ is trivial.

For $g=2$, $n=0$, we have to prove that $Z(\cGG(S)^\cC)=Z(\GG(S))=\langle\iota\rangle$, where $\iota$ is the hyperelliptic involution. Let $\gm$ be a 
nonseparating closed curve on $S$ and $f\in Z(\cGG(S)^\cC)$. As above, possibly after replacing $f$ with $f\cdot\iota$, we have that 
$f\in\cGG^\cC_{\vec\gm}$. We then proceed as in the case $g=1$, $n=1$ and show that either $f$ is trivial or $f=\iota$.
 
For $g=2$, $n\geq 1$, we have to prove that $Z(\cGG(S)^\cC)=Z(\GG(S))=\{1\}$. In this case, we let $\gm$ be a separating curve on $S$ which bounds
an unpunctured genus $1$ subsurface $S'$ of $S$. By Corollary~\ref{centralizers multitwists}, we have 
$Z(\cGG(S)^\cC)\subseteq Z(\cGG^\cC_\gm)=Z(\cGG^\cC_{\vec\gm})$. Let us show that $Z(\cGG^\cC_{\vec\gm})$ consists of the $\ZZ^\cC$-powers
of the half-twist $\tau_{1/2}$ on $S$ which restricts to the elliptic involution on $S'$ and to the identity on the complement of $S'$ in $S$. 
By the genus $1$ case, indeed, the center of $\cGG(S\ssm\gm)^\cC$ is generated by the homeomorphism $\iota$ which restricts to the elliptic involution
on $S'$ and to the identity on the complement of $S'$ in $S$. This implies the previous claim since $\tau_{1/2}^2=\tau_\gm$. The conclusion then follows
because $Z(\GG(S))\cap\langle\tau_{1/2}\rangle=\{1\}$.

For $g\geq 3$, we proceed by induction on the genus taking as basis for the induction the cases already proved. Let us then assume that the statement
of the theorem holds for all $2\leq g\leq k$ and let us prove it for $g=k+1$. Essentially the same argument of the case $g=2$, $n\geq 1$ and
the induction hypothesis yield that $Z(\cGG(S)^\cC)=Z(\GG(S))=\{1\}$.
\end{proof}

\subsection[A Birman exact sequence]{A Birman exact sequence for pro-$\cC$-congruence mapping class groups: 
the case $\check{\Pi}^\cC\equiv\hP^\cC$}\label{shortexact}
Let us now describe in a more precise way the epimorphism $\hat{\Delta}^\cC_x\co\cGG(S_x)^\cC\tura \cGG (S,x)^\cC$ introduced in 
Section~\ref{tangential} and give necessary and sufficient conditions for this to be an isomorphism. An immediate consequence of 
Corollary~\ref{centralizers multitwists} and Theorem~\ref{centerfree} is:

\begin{corollary}\label{faithful}Let $S$ be a connected surface of negative Euler characteristic, $x\in S$ and let $\check{\Pi}^\cC$ be the  
$\cC$-congruence completion of $\Pi:=\pi_1(S,x)$ defined in Section~\ref{standard}. The action by restriction of inner automorphisms of $\cGG(S_x)^\cC$ 
on its normal subgroup $\check{\Pi}^\cC$ then induces a faithful representation $\td\rho_\cC\co\cGG(S_x)^\cC\hookra\aut(\check{\Pi}^\cC)$. 
In particular, the homomorphism $\hat{\Delta}^\cC_x$ is an isomorphism, if and only if, $\check{\Pi}^\cC\equiv\hP^\cC$.
\end{corollary}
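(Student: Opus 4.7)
The kernel of $\td\rho_\cC$ is the centralizer $Z:=Z_{\cGG(S_x)^\cC}(\ccP^\cC)$; the goal is to show $Z=1$. The plan is to use the tangential base point description of $\cGG(S_x)^\cC$ from Section \ref{tangential}, which via Lemma \ref{tangential1} realises $\cGG(S_x)^\cC\cong\cGG(S_\circ)^\cC$ as a subquotient of $\aut_u(\hP^\cC_\circ)$, and then translate the centralizing condition into an explicit identity on a lifted automorphism that Lemma \ref{central} will dispose of.

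Given $f\in Z$, the first step is to lift it, via the inverse of $\hat\Theta^\cC_u$ from Lemma \ref{tangential1}, to $\tf\in\aut_u(\hP^\cC_\circ)$, uniquely determined modulo $\Inn(u^{\ZZ^\cC})$. Under the identification $\GG(S_x)\cong\GG(S_\circ)$, for every $\gm\in\Pi$ the point-pushing class $P(\gm)\in\ccP^\cC$ corresponds to the outer class $[\inn_{\gm_\circ}]$, where $\gm_\circ\in\Pi_\circ$ is any preimage of $\gm$ along the natural surjection $\Pi_\circ\tura\Pi$. That $f$ centralizes $P(\gm)$ then reads $\inn_{\tf(\gm_\circ)}=\inn_{\gm_\circ\cdot u^{k(\gm_\circ)}}$ in $\aut(\hP^\cC_\circ)$ for some $k(\gm_\circ)\in\ZZ^\cC$; since $Z(\hP^\cC_\circ)=1$ by Lemma \ref{central}, injectivity of $\inn$ yields $\tf(\gm_\circ)=\gm_\circ\cdot u^{k(\gm_\circ)}$. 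By density of $\Pi_\circ$ in $\hP^\cC_\circ$ and continuity of $\tf$, the identity extends, with $k$ continuous, to all of $\hP^\cC_\circ$.

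The homomorphism relation $\tf(\alpha\beta)=\tf(\alpha)\tf(\beta)$ rearranges to $\beta^{-1}u^{k(\alpha)}\beta=u^{k(\alpha\beta)-k(\beta)}$, forcing $\beta^{-1}u^{k(\alpha)}\beta\in u^{\ZZ^\cC}$ for every $\beta\in\hP^\cC_\circ$. By Lemma \ref{central}, $N_{\hP^\cC_\circ}(u^{n\ZZ^\cC})=u^{\ZZ^\cC}$ for every $n\neq 0$, so picking any $\beta\notin u^{\ZZ^\cC}$ (available since $\hP^\cC_\circ$ is not procyclic) forces $k(\alpha)=0$ for every $\alpha$. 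Hence $\tf=\Id$, and so $f=1$ in $\cGG(S_x)^\cC$, proving faithfulness. The ``In particular'' statement then follows formally: if $\ccP^\cC\equiv\hP^\cC$, the action of $\cGG(S_x)^\cC$ on $\ccP^\cC$ factors as $\cGG(S_x)^\cC\stackrel{\hd^\cC_x}{\to}\cGG(S,x)^\cC\hookra\aut(\hP^\cC)\cong\aut(\ccP^\cC)$, so faithfulness of $\td\rho_\cC$ forces $\hd^\cC_x$ to be injective, hence an isomorphism; conversely, if $\hd^\cC_x$ is an isomorphism, its restriction to $\ccP^\cC$ is simultaneously injective (as the restriction of an isomorphism) and surjective onto $\hP^\cC$ (by Proposition \ref{epi}), giving $\ccP^\cC\equiv\hP^\cC$.

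The main obstacle I expect is the geometric identification used above: that the image of a point-pushing $P(\gm)$ under the chain $\cGG(S_\circ)^\cC\hookra\out_\cU(\hP^\cC_\circ)\cong\aut_u(\hP^\cC_\circ)/\Inn(u^{\ZZ^\cC})$ is exactly the outer class $[\inn_{\gm_\circ}]$. While standard at the discrete level (a base-point change along $\gm$ induces conjugation by a lift of $\gm$), verifying it carefully through the tangential-base-point constructions of Section \ref{tangential} and ensuring its compatibility at the pro-$\cC$ level is where the most technical care lies; once that is granted, the rest of the argument is a direct algebraic computation driven by Lemma \ref{central}.
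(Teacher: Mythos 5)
Your reduction of the problem to showing $Z_{\cGG(S_x)^\cC}(\ccP^\cC)=1$ is the right starting point, and your handling of the ``in particular'' clause is fine, but the central step of your argument rests on a false identification. You claim that, under $\GG(S_x)\cong\GG(S_\circ)\subset\out(\Pi_\circ)$, the point-pushing class $P(\gm)$ corresponds to the outer class $[\inn_{\gm_\circ}]$ for a preimage $\gm_\circ\in\Pi_\circ$ of $\gm$. This cannot be the case: $[\inn_{\gm_\circ}]$ is by definition trivial in $\out(\Pi_\circ)$ (and $\inn_{\gm_\circ}$ does not even lie in $\aut_u(\hP^\cC_\circ)$ unless $\gm_\circ\in u^{\ZZ^\cC}$, by Lemma~\ref{central}), whereas $P(\gm)$ is a nontrivial element of $\GG(S_\circ)$. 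The identity $P(\gm)=\inn_\gm$ holds only for the representation $\GG(S,x)\hookra\aut(\hP^\cC)$ on the fundamental group of the \emph{unpunctured} surface, i.e.\ after applying $\hat{\Delta}^\cC_x$; the whole content of the corollary is what happens before applying $\hat{\Delta}^\cC_x$, where $P(\gm)$ is the bounding pair map $\tau_{\gm^+}\cdot\tau_{\gm^-}^{-1}$ on $S_x$ --- a composition of two ``partial conjugations'' attached to the decomposition of $S_x$ along $\gm^+\cup\gm^-$, not a global conjugation. Consequently the equation $\tf(\gm_\circ)=\gm_\circ u^{k(\gm_\circ)}$ does not follow from $f$ centralizing $P(\gm)$, and everything downstream of it collapses. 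You flagged this identification as the point requiring ``technical care,'' but it is not a technicality: it is false, and it is precisely the reason the statement is not elementary.

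The paper's proof confronts the bounding pair description head-on: if $f$ centralizes the pro-$\cC$ multitwist $\tau_{\gm^+}\cdot\tau_{\gm^-}^{-1}$, then by Corollary~\ref{centralizers multitwists} (whose substance is the homological rigidity of multitwists, Theorem~\ref{multitwists}) $f$ centralizes each of $\tau_{\gm^+}$ and $\tau_{\gm^-}$ separately; since such twists topologically generate $\cGG(S_x)^\cC$, $f$ lies in $Z(\cGG(S_x)^\cC)$, and Theorem~\ref{centerfree} together with the nontrivial action of the (hyper)elliptic involution on $\ccP^\cC$ forces $f=1$. If you want to salvage a hands-on computation in $\aut_u(\hP^\cC_\circ)$, you still need some substitute for the multitwist rigidity input; Lemma~\ref{central} alone does not suffice.
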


\begin{proof}Let $\gm\in\Pi\subset\check{\Pi}^\cC$ be an element whose free homotopy class contains a simple closed curve on $S$, which we also denote
by $\gm$. Then, its image in $\GG(S_x)\subset\cGG(S_x)^\cC$ is the bounding pair map $\tau_{\gm^+}\cdot\tau_{\gm^-}^{-1}$, where $\gm^+,\gm^-$ 
is a pair of simple closed curves on $S_x$ which bound a tubular neighborhood of $\gm$. Let $f\in\ker\td\rho_\cC$. By the definition of $\td\rho_\cC$,
the element $f$ centralizes the multitwist $\tau_{\gm^+}\cdot\tau_{\gm^-}^{-1}$. By Corollary~\ref{centralizers multitwists}, it then centralizes both
Dehn twists $\tau_{\gm^+},\tau_{\gm^-}^{-1}$. Since, for all $\gm\in\Pi$ as above, these elements generate (topologically) $\cGG(S_x)^\cC$,
it follows that $f\in Z(\cGG(S_x)^\cC)$. Thus, either $f=1$ or $f$ is an elliptic or hyperelliptic involution. But, in the latter case, $\td\rho_\cC(f)\neq 1$.
Therefore, $f=1$. The last claim of the corollary is obvious.
\end{proof}

\begin{remark}\label{centerfree2}Since a group acts faithfully on itself by inner automorphisms if and only if it is centerfree, 
Corollary~\ref{faithful} also implies that the profinite group $\check{\Pi}^\cC$ is centerfree.
\end{remark}

\begin{corollary}\label{comparison}Let $S$ be an oriented connected surface of finite negative Euler characteristic, $x\in S$ and
$\cC$ a class of finite groups such that $\check{\Pi}^\cC\equiv\hP^\cC$ (e.g.\ $\cC=\cF$ or $(p)$, for $p$ a prime). 
Then, $\hat{\Delta}^\cC_x\co\cGG(S_x)^\cC\tura \cGG (S,x)^\cC$ is an isomorphism
and determines a Birman exact sequence: 
\begin{equation}\label{Birman2}
1\to\hP^{\cC}\to\cGG(S_x)^\cC\to\cGG(S)^{\cC}\to 1.
\end{equation}
\end{corollary}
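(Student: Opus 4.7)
The plan is short: both claims follow almost immediately from Corollary~\ref{faithful} combined with the Birman exact sequence~(\ref{Birman1}) already established for $\cGG(S,x)^\cC$.

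For the first assertion, I would directly invoke Corollary~\ref{faithful}, which states that $\hat{\Delta}^\cC_x$ is an isomorphism if and only if $\check{\Pi}^\cC\equiv\hP^\cC$. The latter is precisely the hypothesis on $\cC$ (and is verified for $\cC=\cF$ or $(p)$ by Proposition~\ref{epi}), so $\hat{\Delta}^\cC_x\co\cGG(S_x)^\cC\to\cGG(S,x)^\cC$ is an isomorphism of profinite groups.

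For the Birman exact sequence~(\ref{Birman2}), I would transport the known Birman sequence~(\ref{Birman1})
\[1\to\hP^{\cC}\to\cGG(S,x)^{\cC}\to\cGG(S)^{\cC}\to 1\]
through the inverse of $\hat{\Delta}^\cC_x$. What remains to verify is that the composite $\cGG(S_x)^\cC\sr{\hat{\Delta}^\cC_x}{\lra}\cGG(S,x)^\cC\tura\cGG(S)^\cC$ coincides with the natural continuous epimorphism $\cGG(S_x)^\cC\tura\cGG(S)^\cC$ induced by filling in the puncture $x$. This compatibility is built into the construction of $\hat{\Delta}^\cC_x$ in Section~\ref{tangential}: both maps are continuous extensions of the standard discrete ``forget the marked point'' homomorphism $\GG(S_x)\cong\GG(S,x)\to\GG(S)$, and since $\GG(S_x)$ is dense in $\cGG(S_x)^\cC$, the two extensions must agree. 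Exactness at the three terms is then inherited from (\ref{Birman1}) via the isomorphism.

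I do not anticipate a real obstacle here: all the substantive input has already been supplied. The exactness of (\ref{Birman1}) is a consequence of the triviality of the center of $\hP^\cC$ (Lemma~\ref{central}); the injectivity in Corollary~\ref{faithful} ultimately rests on the center-free property of $\cGG(S_x)^\cC$ (Theorem~\ref{centerfree}) and the centralizer description of pro-$\cC$ multitwists (Corollary~\ref{centralizers multitwists}); and the condition $\check{\Pi}^\cC\equiv\hP^\cC$ is the hypothesis. The present corollary simply assembles these ingredients, so the proof is essentially a formal consequence of the preceding results.
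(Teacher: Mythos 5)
Your proposal is correct and follows the paper's own argument exactly: the paper likewise deduces the isomorphism from Corollary~\ref{faithful} under the hypothesis $\check{\Pi}^\cC\equiv\hP^\cC$ and then obtains~(\ref{Birman2}) by combining it with the Birman exact sequence~(\ref{Birman1}). Your extra remark on the compatibility of the two epimorphisms onto $\cGG(S)^\cC$ via density of $\GG(S_x)$ is a harmless elaboration of what the paper leaves implicit.
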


\begin{proof}By Corollary~\ref{faithful}, the homomorphism $\hat{\Delta}^\cC_x$ is an isomorphism. Combining this isomorphism with the 
Birman exact sequence~(\ref{Birman1}), we get the short exact sequence~(\ref{Birman2}). The claim for $\cC=\cF$ or $(p)$ is then just
(ii) of Proposition~\ref{epi}.
\end{proof}

\begin{remark}\label{Hoshi}For $\cC=\cF$ or $(p)$, Corollary~\ref{comparison} was proved by Matsumoto in the open surface case
(cf.\ Theorem~2.2 in \cite{Matsumoto}) and then extended by Hoshi and Mochizuki to the closed surface case (cf.\ Corollary~6.2 in \cite{HM} 
and Lemma~20 in \cite{Hoshi}). 
\end{remark}

\subsection{Comparison of $\cC$-congruence topologies}\label{compcong}
In Corollary~\ref{faithful}, we gave necessary and sufficient conditions under which the $\cC$-congruence topologies introduced in 
Section~\ref{standard} coincide. In this section we prove a similar result for the $\cC$-congruence topologies of Section~\ref{conftop}.
The conclusion will be that all the possible $\cC$-congruence topologies defined on the mapping class group coincide for $\cC=\cF$ or $(p)$.

With the notations of Section~\ref{conftop}, let us denote by $\pi_i\co S(n)\to S(n-1)$ the natural map which forgets the $i$-th point, for $i=1,\ldots,n$. 
The fiber of $\pi_i$ over the point $\ol{x}_i:=\{x_1,\ldots,x_{i-1},x_{i+1},\ldots,x_n\}\in S(n-1)$ is then naturally identified with $S_{\ol{x}_i}:=S\ssm\ol{x}_i$ 
and we let $\Pi_{\ol{x}_i}:=\pi_1(S_{\ol{x}_i}, x_i)$, for $i=1,\ldots,n$. There is a natural $\GG(S,\ol{x})$-equivariant embedding $\Pi_{\ol{x}_i}\hookra\Pi(n)$ 
and also $\Pi_{\ol{x}_i}\hookra\GG(S_{\ol{x}})$, for $i=1,\ldots,n$. We then denote by $\check{\Pi}_{\ol{x}_i}^\cC$ the closure of $\Pi_{\ol{x}_i}$ in the
$\cC$-congruence completion $\cGG(S_{\ol{x}})^\cC$. The following result, for $\cC=\cF$ or $(p)$, was proved by 
Mochizuki and Tamagawa (cf.\ (i) of Proposition~2.2 \cite{MT}):

\begin{proposition}\label{configuration}Let $S$ be a connected oriented surface of finite type, $S(n)$ the configuration space of $n$ distinct point on it,
let $\ol{x}=\{x_1,\ldots,x_n\}\in S(n)$ and let $\cC$ be a class of finite groups such that $\check{\Pi}_{\ol{x}_i}^\cC\equiv\hP_{\ol{x}_i}^\cC$, for all $n\geq 1$
(e.g.\ $\cC=\cF$ or $(p)$). Then, any projection morphism $\pi_i\co S(n)\to S(n-1)$ 
determines a natural short exact sequence of pro-$\cC$ fundamental groups, for $i=1,\ldots,n$:
\begin{equation}\label{conf1}
1\to\hP_{\ol{x}_i}^\cC\to\hp_1(S(n),\ol{x})^\cC\to\hp_1(S(n-1),\ol{x}_i)^\cC\to 1.
\end{equation}
\end{proposition}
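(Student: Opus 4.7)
My plan is to realize the desired short exact sequence as a restriction of the Birman exact sequence of Corollary~\ref{comparison} to the closures of the configuration-space fundamental groups inside the pro-$\cC$ congruence mapping class groups.

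First I observe that the Fadell--Neuwirth fibration $\pi_i\co S(n)\to S(n-1)$, whose fiber over $\ol{x}_i$ is $S_{\ol{x}_i}$, yields the topological short exact sequence
\[
1 \to \Pi_{\ol{x}_i} \to \Pi(n) \to \Pi(n-1) \to 1.
\]
Via the iterated Birman embeddings, $\Pi(n)\hookra\GG(S,\ol{x})$ and $\Pi(n-1)\hookra\GG(S,\ol{x}_i)$ sit as the kernels of the natural projections onto $\GG(S)$. Under these identifications, the fiber sequence above is the restriction to $\Pi(n)$ of the classical Birman exact sequence
\[
1 \to \Pi_{\ol{x}_i} \to \GG(S,\ol{x}) \to \GG(S,\ol{x}_i) \to 1
\]
induced by filling in the $i$-th marked point.

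Under the hypothesis $\check{\Pi}_{\ol{x}_i}^\cC\equiv\hP_{\ol{x}_i}^\cC$, Corollary~\ref{comparison} applies to the pointed surface $(S_{\ol{x}_i},x_i)$ and delivers the pro-$\cC$ Birman exact sequence
\[
1 \to \hP_{\ol{x}_i}^\cC \to \cGG(S,\ol{x})^\cC \to \cGG(S,\ol{x}_i)^\cC \to 1. \quad (\ast)
\]
The heart of the proof is then to identify the closure of $\Pi(n)$ (resp.\ $\Pi(n-1)$) in $\cGG(S,\ol{x})^\cC$ (resp.\ $\cGG(S,\ol{x}_i)^\cC$) with $\hP(n)^\cC$ (resp.\ $\hP(n-1)^\cC$). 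For this I would use the representation $\rho_n^\cC\co\GG(S,\ol{x})\to\aut(\hP(n)^\cC)$ of Section~\ref{conftop}: its restriction to the normal subgroup $\Pi(n)$ is the inner action of the (profinitely) centerfree group $\Pi(n)$ on itself, which induces exactly the pro-$\cC$ topology on $\Pi(n)$. Together with Corollary~\ref{faithful}, the hypothesis ensures that the various $\cC$-congruence topologies on $\GG(S,\ol{x})$ agree, so the closures of $\Pi(n)$ and $\Pi(n-1)$ in the relevant completions are the pro-$\cC$ completions $\hP(n)^\cC$ and $\hP(n-1)^\cC$ respectively.

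Restricting $(\ast)$ to these closures then gives the desired sequence: the kernel on the left equals $\hP_{\ol{x}_i}^\cC$ because $\Pi_{\ol{x}_i}\subset\Pi(n)$, so $\check{\Pi}_{\ol{x}_i}^\cC=\hP_{\ol{x}_i}^\cC$ is contained in the closure $\hP(n)^\cC$; the converse inclusion is clear from $(\ast)$. Surjectivity onto $\hP(n-1)^\cC$ follows from compactness together with the density of $\Pi(n-1)$. The main obstacle will be the topology-comparison step: the identification of the closure of $\Pi(n)$ in $\cGG(S,\ol{x})^\cC$ with $\hP(n)^\cC$ is not formal, and I expect it will have to be carried out inductively on $n$, with the inductive hypothesis supplying via $(\ast)$ at smaller levels the matching between the pro-$\cC$ and the $\cC$-congruence topologies on $\Pi(n)$.
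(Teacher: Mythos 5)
Your proposal is correct and follows essentially the same route as the paper: an induction on $n$ showing that the closure of $\Pi(n)$ in the $\cC$-congruence completion of the mapping class group of the punctured surface is the pro-$\cC$ completion $\hP(n)^\cC$, with the inductive step obtained by restricting the Birman exact sequence of Corollary~\ref{comparison} and using the hypothesis $\check{\Pi}_{\ol{x}_i}^\cC\equiv\hP_{\ol{x}_i}^\cC$ to identify the kernel. One caution: the aside in which you invoke agreement of the configuration-space topology of Section~\ref{conftop} with the standard one would be circular if relied upon (that agreement is Corollary~\ref{compconf}, which is deduced from this proposition), but your final inductive scheme does not actually depend on it.
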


\begin{proof}Let $\Pi(n):=\pi_1(S(n),\ol{x})$ and $\Pi(n-1):=\pi_1(S(n-1),\ol{x}_i)$. There is a natural embedding 
$\Pi(n)\subset\GG(S,\ol{x})\cong\GG(S_{\ol{x}})$ and we identify $\Pi(n)$ with a subgroup of $\GG(S_{\ol{x}})$. 
By induction on $n\geq 1$, we will then prove the following statement:
\begin{itemize}
\item The closure $\ol{\Pi}(n)$ of $\Pi(n)$ in the $\cC$-congruence completion $\cGG(S_{\ol{x}})^\cC$ is the pro-$\cC$ completion $\hP(n)^\cC$.
\end{itemize}

The case $n=1$ is just that $\hP(1)^\cC=\hP^\cC\equiv\ccP^\cC\subset\cGG(S_{x_1})^\cC$, which we know by hypothesis.
So, let us assume that the closure $\ol{\Pi}(n-1)$ of $\Pi(n-1)$ in $\cGG(S_{\ol{x}_i})^\cC$ is the pro-$\cC$ completion $\hP(n-1)^\cC$ and let
us prove the same statement for $\Pi(n)$.

The isomorphisms $\GG(S,\ol{x})\cong\GG(S_{\ol{x}})$ and $\GG(S,\ol{x}_i)\cong\GG(S_{\ol{x}_i})$ identify, respectively,
the group $\Pi(n)$ with a subgroup of $\GG(S_{\ol{x}})$, the group $\Pi(n-1)$ with a subgroup of $\GG(S_{\ol{x}_i})$ and the group
$\Pi_{\ol{x}_i}$ with the kernel of the natural homomorphism $\GG(S_{\ol{x}})\to\GG(S_{\ol{x}_i})$.

By hypothesis, the closure of $\Pi_{\ol{x}_i}$ in the $\cC$-congruence completion $\cGG(S_{\ol{x}_i})^\cC$ is the pro-$\cC$ completion 
$\hP_{\ol{x}_i}^\cC$. Therefore, by the inductive hypothesis and Corollary~\ref{comparison}, there is a short exact sequence:
\begin{equation}\label{conf2}
1\to\hP_{\ol{x}_i}^\cC\to\ol{\Pi}(n)\to\hP(n-1)^\cC\to 1.
\end{equation}
This immediately implies the claim that $\ol{\Pi}(n)=\hP(n)^\cC$. The statement of the proposition then follows from
the short exact sequence~(\ref{conf2}).
\end{proof}

\begin{corollary}\label{compconf}Let $\cC$ be a class of finite groups such that $\check{\Pi}_{\ol{x}_i}^\cC\equiv\hP_{\ol{x}_i}^\cC$, for all $n\geq 1$ 
(e.g.\ $\cC=\cF$ or $(p)$). Then, there are natural isomorphisms of profinite groups 
$\cGG(S,\ol{x})^\cC\cong\cGG(S_{\ol{x}_i},x_i)^\cC\cong\cGG(S_{\ol{x}})^\cC$, where $\ol{x}=\{x_1,\ldots,x_n\}\in S(n)$ and $i=1,\ldots,n$.
\end{corollary}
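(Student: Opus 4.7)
The proof splits into the two stated isomorphisms. The right-hand isomorphism $\cGG(S_{\ol{x}_i},x_i)^\cC \cong \cGG(S_{\ol{x}})^\cC$ is immediate from Corollary~\ref{comparison} applied to the surface $S_{\ol{x}_i}$ with marked point $x_i$: one has $(S_{\ol{x}_i})_{x_i} = S_{\ol{x}}$ by construction, and the hypothesis $\check{\Pi}_{\ol{x}_i}^\cC \equiv \hP_{\ol{x}_i}^\cC$ is exactly the condition that makes $\hat{\Delta}^\cC_{x_i}$ an isomorphism.

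For the left-hand isomorphism $\cGG(S,\ol{x})^\cC \cong \cGG(S_{\ol{x}_i},x_i)^\cC$, I would induct on $n\geq 1$, with the base case $n=1$ being tautological (both sides are $\cGG(S,x_1)^\cC$). For the inductive step, the plan is to construct a Birman-type exact sequence
\[ 1 \to \hP_{\ol{x}_i}^\cC \to \cGG(S,\ol{x})^\cC \to \cGG(S,\ol{x}_i)^\cC \to 1 \]
and to compare it, via the natural discrete Birman identification $\GG(S,\ol{x}) \cong \GG(S_{\ol{x}_i},x_i)$, with the sequence
\[ 1 \to \hP_{\ol{x}_i}^\cC \to \cGG(S_{\ol{x}_i},x_i)^\cC \to \cGG(S_{\ol{x}_i})^\cC \to 1 \]
produced by Corollary~\ref{comparison} applied to $S_{\ol{x}_i}$; the right-hand terms are matched by the inductive hypothesis for $n-1$ points, which gives $\cGG(S,\ol{x}_i)^\cC \cong \cGG(S_{\ol{x}_i})^\cC$, so the five lemma will yield the desired isomorphism.

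The construction of the first exact sequence is driven by Proposition~\ref{configuration}, which supplies the pro-$\cC$ short exact sequence $1 \to \hP_{\ol{x}_i}^\cC \to \hP(n)^\cC \to \hP(n-1)^\cC \to 1$. The representation $\rho_n^\cC$ preserves the normal subgroup $\hP_{\ol{x}_i}^\cC$ and descends on the quotient to the representation of $\GG(S,\ol{x}_i)$ on $\hP(n-1)^\cC$, so the forgetful map $\GG(S,\ol{x}) \to \GG(S,\ol{x}_i)$ is $\cC$-congruence continuous and gives the right-hand arrow. The left-hand arrow comes from the closure of $\Pi_{\ol{x}_i} \subset \GG(S,\ol{x})$ in $\cGG(S,\ol{x})^\cC$: via $\rho_n^\cC$ this subgroup acts by inner automorphisms through the embedding $\hP_{\ol{x}_i}^\cC \hookra \hP(n)^\cC$, and faithfulness follows from Lemma~\ref{central}, which forces $Z(\hP_{\ol{x}_i}^\cC)=1$ and hence $\hP_{\ol{x}_i}^\cC \cap Z(\hP(n)^\cC)=1$.

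The main obstacle will be proving exactness in the middle of the top sequence, i.e.\ that $\ker(\cGG(S,\ol{x})^\cC \to \cGG(S,\ol{x}_i)^\cC) = \hP_{\ol{x}_i}^\cC$. This amounts to checking that the $\cC$-congruence topology on $\GG(S,\ol{x})$ restricts to the full pro-$\cC$ topology on the subgroup $\Pi_{\ol{x}_i}$, equivalently that every $\cC$-open characteristic subgroup of $\hP_{\ol{x}_i}^\cC$ is cut out by a $\cC$-open characteristic subgroup of $\hP(n)^\cC$ — a splitting-type property for the exact sequence of Proposition~\ref{configuration} that should follow from exploiting the conjugation action of the ambient profinite group $\hP(n)^\cC$ to enlarge any given $\cC$-open characteristic subgroup of $\hP_{\ol{x}_i}^\cC$ to a $\GG(S,\ol{x})$-invariant one compatible with the short exact sequence.
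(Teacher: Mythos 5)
Your treatment of the right-hand isomorphism $\cGG(S_{\ol{x}_i},x_i)^\cC\cong\cGG(S_{\ol{x}})^\cC$ via Corollary~\ref{comparison} is correct and is exactly what the paper does. The problem is the left-hand isomorphism. Your five-lemma strategy hinges on exactness in the middle of
\[
1\to\hP_{\ol{x}_i}^\cC\to\cGG(S,\ol{x})^\cC\to\cGG(S,\ol{x}_i)^\cC\to 1,
\]
and you have misidentified what that exactness requires. Checking that the $\cC$-congruence topology of $\GG(S,\ol{x})$ restricts to the pro-$\cC$ topology on $\Pi_{\ol{x}_i}$ only tells you that the closure of $\Pi_{\ol{x}_i}$ in $\cGG(S,\ol{x})^\cC$ is $\hP_{\ol{x}_i}^\cC$, i.e.\ it controls the left-hand arrow; it does not bound the kernel of the right-hand arrow from above. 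What you actually need is that any $f\in\cGG(S,\ol{x})^\cC\subset\aut(\hP(n)^\cC)$ inducing the identity on the quotient $\hP(n-1)^\cC$ is conjugation by an element of $\hP_{\ol{x}_i}^\cC$ --- equivalently, that the quotient topology induced on $\GG(S,\ol{x}_i)$ by the congruence topology of $\GG(S,\ol{x})$ coincides with the congruence topology of $\GG(S,\ol{x}_i)$. This is a statement of the same depth as exactness in the middle of the Birman sequence~(\ref{Birman2}), which in the paper consumes the centralizer machinery of Sections~\ref{modular section} and~\ref{c&c} (Corollary~\ref{centralizers multitwists}, Theorem~\ref{centerfree}, Corollary~\ref{faithful}); your proposed fix by enlarging characteristic subgroups of $\hP_{\ol{x}_i}^\cC$ does not touch it. So, as written, the proof has a genuine gap at its central step.

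The paper sidesteps exactness altogether. Having established in the proof of Proposition~\ref{configuration} that the closure of $\Pi(n)$ in $\cGG(S_{\ol{x}})^\cC\cong\cGG(S_{\ol{x}_i},x_i)^\cC$ is $\hP(n)^\cC$, and that the closure of $\Pi_{\ol{x}_i}$ in $\cGG(S,\ol{x})^\cC$ is $\hP_{\ol{x}_i}^\cC$, it defines $\phi\co\cGG(S_{\ol{x}_i},x_i)^\cC\to\cGG(S,\ol{x})^\cC$ by restricting inner automorphisms to the normal subgroup $\hP(n)^\cC$, and $\psi\co\cGG(S,\ol{x})^\cC\to\cGG(S_{\ol{x}_i},x_i)^\cC$ by restricting inner automorphisms to the normal subgroup $\hP_{\ol{x}_i}^\cC$; both are continuous epimorphisms because on the dense discrete mapping class groups they recover the defining representations. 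Since $\psi\circ\phi$ and $\phi\circ\psi$ are the identity on dense subgroups, they are the identity by continuity, and no five lemma or exact sequence is needed. If you want to salvage your route, replace the five-lemma step by this direct construction of a two-sided inverse.
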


\begin{proof}In the proof of Proposition~\ref{configuration}, we have shown that the closure of $\Pi(n)$ in $\cGG(S_{\ol{x}})^\cC$ is
the pro-$\cC$ completion $\hP(n)^\cC$. Let us fix $i\in\{1,\ldots,n\}$. By Corollary~\ref{comparison}, there is a natural isomorphism 
$\cGG(S_{\ol{x}})^\cC\cong\cGG(S_{\ol{x}_i},x_i)^\cC$. Therefore, the closure of $\Pi(n)$ in $\cGG(S_{\ol{x}_i},x_i)^\cC$ also identifies with
the pro-$\cC$ completion $\hP(n)^\cC$. There is then a natural continuous epimorphism $\phi\co\cGG(S_{\ol{x}_i},x_i)^\cC\to\cGG(S,\ol{x})^\cC$,
obtained restricting the inner automorphisms of $\cGG(S_{\ol{x}_i},x_i)^\cC$ to its normal subgroup $\hP(n)^\cC$. On the other hand,
by Proposition~\ref{configuration}, the closure of $\Pi_{\ol{x}_i}$ in the $\cC$-congruence completion $\cGG(S,\ol{x})^\cC$ is the
pro-$\cC$ completion $\hP_{\ol{x}_i}^\cC$. Thus, by restricting the inner automorphisms of $\cGG(S,\ol{x})^\cC$ to its normal subgroup 
$\hP_{\ol{x}_i}^\cC$, we see that there is also a natural continuous epimorphism $\psi\co\cGG(S,\ol{x})^\cC\to\cGG(S_{\ol{x}_i},x_i)^\cC$.
It is clear that $\psi\circ\phi$ and $\phi\circ\psi$ are the identity when restricted to the dense subgroups $\GG(S_{\ol{x}_i},x_i)$ and $\GG(S,\ol{x})$, 
respectively. Therefore, by continuity, we have $\psi\circ\phi=\Id$ and $\phi\circ\psi=\Id$, which proves the corollary.
\end{proof}

\begin{remark}\label{obstruction}From the proof of Corollary~\ref{compconf}, it follows that if the short sequence~(\ref{conf1}) is exact, then
$\check{\Pi}_{\ol{x}_i}^\cC\equiv\hP_{\ol{x}_i}^\cC$. Since the sequence~(\ref{conf1}) is right exact for any class of finite groups $\cC$, we see that the 
obstruction to have an isomorphism $\check{\Pi}_{\ol{x}_i}^\cC\equiv\hP_{\ol{x}_i}^\cC$, for an arbitrary class $\cC$ of finite groups, is the fact that the 
sequence~(\ref{conf1}) may not be left exact. Necessary and sufficient conditions which ensure left exactness of the pro-$\cC$ completion functor are 
given by Anderson in \cite{Anderson} and are not difficult to check for $\cC=\cF$ or $(p)$ but are not necessarily satisfied by a more general class of 
finite groups (e.g.\ by $\cC=\cS$).
\end{remark}

\section[A Birman exact sequence]{A Birman exact sequence for pro-$\cC$-congruence\\ mapping class groups: the general case}\label{weakBirman}
In this Section, we will show that, for an open surface $S$, the kernel of the epimorphism $\hat{\Delta}^\cC_x\co\cGG(S_x)^\cC\tura \cGG (S,x)^\cC$
equals the kernel of the restriction of this map to $\ccP^\cC$. 

\begin{definition}\label{S-congruence2}With the notations of Section~\ref{tangential}, let $\GG(S_\circ,\dd S_\circ)$ be the mapping class group of 
self-homeomorphisms of $S_\circ$ which fix pointwise the boundary $\dd S_\circ$. The \emph{$\cC$-congruence completion 
$\cGG(S_\circ,\dd S_\circ)^\cC$ of} $\GG(S_\circ,\dd S_\circ)$ is the profinite completion induced by the $\cC$-topology on $\Pi_\circ$ 
via the faithful representation: $\rho_\circ\co\GG(S_\circ,\dd S_\circ)\hookra\aut(\Pi_\circ)$.
\end{definition}

There is a natural (in general non-split) short exact sequence:
\[1\to\tau_u^\Z\to\GG(S_\circ,\dd S_\circ)\to\GG(S_\circ)\to 1,\]
where $\tau_u$ is the Dehn twist around the boundary component $\dd S_\circ$, and a natural representation:
\[\check{\rho}^\cC\co\cGG(S_\circ,\dd S_\circ)^\cC\to\out_\cU(\hP_\circ^\cC).\]

By composing the representation $\check{\rho}^\cC$ with the isomorphism $\hat{\Kappa}_u^\cC$, defined in the proof of Lemma~\ref{tangential1},
we see that the kernel of $\check{\rho}^\cC$ is precisely the procyclic subgroup generated by the Dehn twist $\tau_u$, whose image in 
$\aut(\hP_\circ^\cC)$ is $\inn(u)$. Therefore, there is also a natural short exact sequence:
\begin{equation}\label{4.2}
1\to\tau_u^{\ZZ^\cC}\to\cGG(S_\circ,\dd S_\circ)^\cC\to\cGG(S_\circ)^\cC\to 1.
\end{equation}

Let us now assume that the Riemann surface $S_\circ$ is nonclosed and let us consider the map $p\co S_\circ\hookra S_{\circ,P}$ 
obtained filling in a puncture of $S_\circ$ with a point $P$. Let $\Pi_P:=\pi_1(S_{\circ,P},P)$. Then, there is an associated Birman short exact sequence:
\begin{equation}\label{Birman3}
1\to\Pi_P\to\GG(S_\circ,\dd S_\circ)\sr{\Psi}{\to}\GG(S_{\circ,P},\dd S_{\circ,P})\to 1.
\end{equation}
The map $p$ has a homotopy section $s\co S_{\circ,P}\hookra S_\circ$ such that $\delta:=s(\dd S_{\circ,P})$
is a separating simple closed curve on $S_\circ$ bounding, on one side, only the boundary $\dd S_\circ$ and the puncture labeled by $P$. 
Let us also fix a path from $x_\circ$ to $s(P)$, crossing only once $\delta$, so that the section $s\co S_{\circ,P}\hookra S_\circ$ induces 
a monomorphism on fundamental groups $\Pi_P\hookra\Pi_\circ$ and, by Lemma~\ref{subsurface}, also of their pro-$\cC$ completions 
$\hP_P^\cC\hookra\hP_\circ^\cC$. Let us identify the groups $\Pi_P$ and $\hP_P^\cC$ with their images in $\Pi_\circ$ and $\hP_\circ^\cC$ respectively.

The map $s$ also induces a section $s_\ast\co\GG(S_{\circ,P},\dd S_{\circ,P})\to\GG(S_\circ,\dd S_\circ)$ of the epimorphism 
$\Psi\co\GG(S_\circ,\dd S_\circ)\to\GG(S_{\circ,P},\dd S_{\circ,P})$ such that the action of the subgroup 
$s_\ast(\GG(S_{\circ,P},\dd S_{\circ,P}))$ of $\GG(S_\circ,\dd S_\circ)$ 
on $\Pi_\circ$, via the representation $\rho_\circ$, stabilizes the subgroup $\Pi_P$.
The Birman exact sequence~(\ref{Birman3}) then splits and we have a semidirect product decomposition:
\[\GG(S_\circ,\dd S_\circ)\cong\Pi_P\rtimes\GG(S_{\circ,P},\dd S_{\circ,P}).\]

\begin{theorem}\label{semidirect}Let $\ccP_P$ be the closure of the fundamental group $\Pi_P$ in the $\cC$-congruence completion 
$\cGG(S_\circ,\dd S_\circ)^\cC$. Then, the above decomposition of $\GG(S_\circ,\dd S_\circ)$ induces one:
\[\cGG(S_\circ,\dd S_\circ)^\cC\cong\ccP_P^\cC\rtimes\cGG(S_{\circ,P},\dd S_{\circ,P})^\cC.\]
In particular, there is a split Birman short exact sequence: 
\begin{equation}\label{4.3}
1\to\ccP_P^\cC\to\cGG(S_\circ,\dd S_\circ)^\cC\to\cGG(S_{\circ,P},\dd S_{\circ,P})^\cC\to 1.
\end{equation}
\end{theorem}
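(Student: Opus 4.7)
The plan is to show that the splitting of the discrete Birman sequence~(\ref{Birman3}) survives the passage to $\cC$-congruence completions, by verifying that both the quotient map $\Psi$ and its section $s_\ast$ are continuous for the $\cC$-congruence topologies and then using the extended section to read off the kernel. Once continuity of $\Psi$ and $s_\ast$ is in hand, we obtain a continuous epimorphism $\widehat\Psi\co\cGG(S_\circ,\dd S_\circ)^\cC\tura\cGG(S_{\circ,P},\dd S_{\circ,P})^\cC$ together with a continuous section $\widehat{s}_\ast$ of $\widehat\Psi$; the result then reduces to identifying $\ker\widehat\Psi$ with $\ccP_P^\cC$.

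For continuity of $\Psi$: given a characteristic $\cC$-open subgroup $K'\lhd\Pi_{\circ,P}$, I pull it back through the canonical projection $\pi\co\Pi_\circ\tura\Pi_{\circ,P}$ (obtained by killing the normal closure of the loop around $P$) to get $K:=\pi^{-1}(K')\lhd\Pi_\circ$, which is of $\cC$-index but need not be characteristic. Since $\Pi_\circ$ is finitely generated, only finitely many $\aut(\Pi_\circ)$-translates $\phi(K)$ are distinct, so their intersection $K^\ast$ is a characteristic $\cC$-open subgroup contained in $K$. Any $g\in\GG(S_\circ,\dd S_\circ)$ trivial on $\Pi_\circ/K^\ast$ acts trivially on the further quotient $\Pi_\circ/K\cong\Pi_{\circ,P}/K'$, so $\Psi(g)$ lies in the corresponding geometric level. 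This proves continuity and produces $\widehat\Psi$.

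For continuity of $s_\ast$: using the HNN-type decomposition of $\Pi_\circ$ along $\delta$ (as in the proof of Theorem~\ref{stabilizer}), the mapping class $s_\ast(f)$ preserves the subgroup $\Pi_P\leq\Pi_\circ$ and acts there via $f$ under the identification $\Pi_P\cong\Pi_{\circ,P}$ provided by $s$, while fixing the stable letter and the loop around $P$. Given a characteristic $\cC$-open $K\lhd\Pi_\circ$, Lemma~\ref{subsurface} ensures that $K\cap\Pi_P$ is $\cC$-open in $\Pi_P$; transferring to $\Pi_{\circ,P}$ via the iso $\Pi_P\cong\Pi_{\circ,P}$ and performing the same averaging trick as in Step~1 yields a characteristic $\cC$-open $K'\lhd\Pi_{\circ,P}$ whose image in $\Pi_P$ lies in $K$. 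Then $f$ trivial on $\Pi_{\circ,P}/K'$ forces $s_\ast(f)$ trivial on $\Pi_\circ/K$, because $s_\ast(f)$ already fixes the other generators. Hence $s_\ast$ is continuous and extends to $\widehat{s}_\ast$.

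Finally, I identify the kernel. Because $\widehat\Psi\circ\widehat{s}_\ast=\Id$, the extension $\widehat{s}_\ast$ is a continuous section of $\widehat\Psi$, so $\cGG(S_\circ,\dd S_\circ)^\cC\cong\ker(\widehat\Psi)\rtimes\widehat{s}_\ast(\cGG(S_{\circ,P},\dd S_{\circ,P})^\cC)$. The inclusion $\ccP_P^\cC\subseteq\ker\widehat\Psi$ is clear. For the converse, let $k\in\ker\widehat\Psi$ and approximate $k$ by a net $g_\alpha\in\GG(S_\circ,\dd S_\circ)$; write $g_\alpha=p_\alpha\cdot s_\ast(f_\alpha)$ with $p_\alpha\in\Pi_P$ and $f_\alpha\in\GG(S_{\circ,P},\dd S_{\circ,P})$ using the discrete splitting. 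Then $\widehat\Psi(g_\alpha)=f_\alpha\to 1$ in $\cGG(S_{\circ,P},\dd S_{\circ,P})^\cC$, whence $\widehat{s}_\ast(f_\alpha)\to 1$ and therefore $p_\alpha=g_\alpha\cdot\widehat{s}_\ast(f_\alpha)^{-1}\to k$, proving $\Pi_P$ is dense in $\ker\widehat\Psi$ and hence $\ker\widehat\Psi=\ccP_P^\cC$. This gives the split short exact sequence~(\ref{4.3}). The main obstacle is the continuity of $s_\ast$, whose verification requires unpacking the HNN structure to control the action of $s_\ast(f)$ on generators of $\Pi_\circ$ \emph{outside} $\Pi_P$; once this is reconciled with the averaging step that produces the characteristic subgroup $K'$, everything else is formal.
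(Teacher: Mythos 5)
Your proof is correct, but it follows a genuinely different route from the paper's. The paper does not verify continuity of $\Psi$ and $s_\ast$ by hand: it first observes that $s_\ast(\GG(S_{\circ,P},\dd S_{\circ,P}))$ is the direct factor $\GG(s(S_{\circ,P}),\delta)$ of the stabilizer $\GG(S_\circ)_{\vec\delta}$ of the separating curve $\delta$, so that Theorem~\ref{stabilizer} identifies the closure of the section's image in $\cGG(S_\circ,\dd S_\circ)^\cC$ with $\cGG(S_{\circ,P},\dd S_{\circ,P})^\cC$ outright; it then introduces the completion $\tGG(S_{\circ,P},\dd S_{\circ,P})^\cC$ induced by $\Psi$ and shows the restriction of $\check\Psi^\cC$ to that closure is an isomorphism by composing with a quotient of the tangential-base-point epimorphism $\hat{\Delta}^\cC_P$ and noting that the composite of the two resulting epimorphisms is the identity. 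You instead establish directly that $\Psi$ and $s_\ast$ are continuous for the $\cC$-congruence topologies (pullback of a characteristic $\cC$-open subgroup followed by intersecting its finitely many $\aut$-translates for $\Psi$; restriction to the subsurface group for $s_\ast$), and then extract both the splitting and the identification $\ker\widehat\Psi=\ccP_P^\cC$ from $\widehat\Psi\circ\widehat{s}_\ast=\Id$ and a density argument. Your version is more elementary and self-contained: it avoids Theorem~\ref{stabilizer} and the $\hat{\Delta}^\cC_P$ machinery entirely, and only uses Lemma~\ref{subsurface} in the easy direction (that a $\cC$-open subgroup of $\Pi_\circ$ meets $\Pi_P$ in a $\cC$-open subgroup, which in fact holds for any subgroup without the lemma). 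The paper's version is shorter given the machinery already in place and records the stronger fact, reused later (e.g.\ in Lemma~\ref{orbifold}), that the induced topology on the section's image \emph{equals} its intrinsic $\cC$-congruence topology --- which you only recover a posteriori from the left-invertibility of $\widehat{s}_\ast$. The one step of yours that genuinely carries weight is the phrase ``because $s_\ast(f)$ already fixes the other generators'': to make it airtight you should record the decomposition $\Pi_\circ\cong\Pi_P\ast\langle\zeta\rangle$, with $\zeta$ a loop around the puncture $P$ supported in the pair of pants bounded by $\delta$ and $\dd S_\circ$, where the chosen representative of $s_\ast(f)$ is the identity; since the characteristic subgroup $K$ is in particular normal and $s_\ast(f)$-invariant, triviality on the generators $\Pi_P\cup\{\zeta\}$ modulo $K$ then does give triviality on $\Pi_\circ/K$. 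With that made explicit, the argument is complete.
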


\begin{proof}The first remark is that, as a subgroup of $\GG(S_\circ)$, the group $s_\ast(\GG(S_{\circ,P},\dd S_{\circ,P}))$ is naturally isomorphic 
to the direct factor $\GG(s(S_{\circ,P}),\delta)$ of the stabilizer $\GG(S_\circ)_{\vec\delta}$. From Theorem~\ref{stabilizer}, it then follows that 
the closure $\ol{s_\ast(\GG(S_{\circ,P},\dd S_{\circ,P}))}$ of the subgroup $s_\ast(\GG(S_{\circ,P},\dd S_{\circ,P})$ in $\cGG(S_\circ,\dd S_\circ)^\cC$ 
is naturally isomorphic to the $\cC$-congruence completion $\cGG(S_{\circ,P},\dd S_{\circ,P})^\cC$ of 
the mapping class group $\GG(S_{\circ,P},\dd S_{\circ,P})$ associated to the natural representation in $\aut(\Pi_P)$ (cf.\ Definition~\ref{S-congruence2}).

Let us denote by $\tGG(S_{\circ,P},\dd S_{\circ,P})^\cC$ the profinite completion of $\GG(S_{\circ,P},\dd S_{\circ,P})$ induced by the
$\cC$-congruence completion $\cGG(S_\circ,\dd S_\circ)^\cC$ via the epimorphism $\Psi$ in the Birman exact sequence~(\ref{Birman3})
and by $\check{\Psi}^\cC\co\cGG(S_\circ,\dd S_\circ)^\cC\to\tGG(S_{\circ,P},\dd S_{\circ,P})^\cC$
the induced epimorphism. In order to prove the theorem, it is then enough to show that the restriction of $\check{\Psi}^\cC$ to
$\ol{s_\ast(\GG(S_{\circ,P},\dd S_{\circ,P}))}$ is an isomorphism.

Let $\tGG(S_{\circ,P})^\cC$ be the profinite completion of $\GG(S_{\circ,P})$ induced by the $\cC$-congruence completion $\cGG(S_\circ)^\cC$ 
via the natural epimorphism $\GG(S_\circ)\to\GG(S_{\circ,P})$ (with kernel $\Pi_P$). By the various definitions involved, there is then a short exact sequence:
\[1\to\tau_u^{\ZZ^\cC}\to\tGG(S_{\circ,P},\dd S_{\circ,P})^\cC\to\tGG(S_{\circ,P})^\cC\to 1.\]

After moding out the closure of $\Pi_P$ in both domain and codomain of the epimorphism 
$\hat{\Delta}^\cC_P\co\cGG(S_\circ)^\cC\tura \cGG (S_{\circ,P},P)^\cC$, we get an epimorphism 
$\bar{\Delta}^\cC_P\co\tGG(S_{\circ,P})^\cC\tura\cGG(S_{\circ,P})^\cC$ and, after moding out the normal subgroup $\tau_u^{\ZZ^\cC}$ in
both domain and codomain of the epimorphism 
$\check{\Psi}^\cC\co\ol{s_\ast(\GG(S_{\circ,P},\dd S_{\circ,P}))}\tura\tGG(S_{\circ,P},\dd S_{\circ,P})^\cC$, we get an epimorphism
$\cGG(S_{\circ,P})^\cC\tura\tGG(S_{\circ,P})^\cC$ which, composed with $\bar{\Delta}^\cC_P$, is an isomorphism and then is itself an isomorphism. 
Therefore, the restriction of $\check{\Psi}^\cC$ to $\ol{s_\ast(\GG(S_{\circ,P},\dd S_{\circ,P}))}$ is an isomorphism as well.
\end{proof}

By moding out the subgroup $\tau_u^{\ZZ^\cC}$ from the short exact sequence~(\ref{4.3}), we get:

\begin{corollary}\label{comparison2}Let $\cC$ be a class of finite groups and $S$ a nonclosed connected surface of negative Euler characteristic.
Then, the natural epimorphism $\hat{\Delta}^\cC_x\co\cGG(S_x)^\cC\tura \cGG (S,x)^\cC$ has the same kernel of the restriction 
$\hat{\Delta}^\cC_x|_{\ccP^\cC}$ and there is a short exact sequence:
\begin{equation}\label{Birman4}
1\to\ccP^\cC\to\cGG(S_x)^\cC\to\cGG(S)^\cC\to 1.
\end{equation}
\end{corollary}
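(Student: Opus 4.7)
The plan is to first derive the short exact sequence~(\ref{Birman4}) by quotienting the split Birman sequence~(\ref{4.3}) of Theorem~\ref{semidirect} by the procyclic subgroup $\tau_u^{\ZZ^\cC}$, and then to deduce the kernel equality for $\hat{\Delta}^\cC_x$ as a formal consequence of (\ref{Birman4}) together with the Birman sequence~(\ref{Birman1}).

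To carry out the quotient, I first observe that the splitting in Theorem~\ref{semidirect} shows that $\tau_u^{\ZZ^\cC}$ lies inside the image of the section $s_\ast(\cGG(S_{\circ,P},\dd S_{\circ,P})^\cC)$, since the natural section carries the boundary Dehn twist of $S_{\circ,P}$ to that of $S_\circ$. Hence $\tau_u^{\ZZ^\cC}\cap\ccP_P^\cC=\{1\}$, so quotienting (\ref{4.3}) by $\tau_u^{\ZZ^\cC}$ leaves the left term unchanged and yields an exact sequence whose middle and right terms are the corresponding quotients. By~(\ref{4.2}) the middle quotient is $\cGG(S_\circ)^\cC\cong\cGG(S_x)^\cC$ (the canonical isomorphism of Section~\ref{tangential}), while the analog of~(\ref{4.2}) applied to $S_{\circ,P}$ identifies the right quotient with $\cGG(S_{\circ,P})^\cC$. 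Since the interior of $S_{\circ,P}$ is, after exchanging the roles of $x$ and $P$, topologically the same pointed surface as $S$, there is a natural isomorphism $\cGG(S_{\circ,P})^\cC\cong\cGG(S)^\cC$ under which $\ccP_P^\cC$ is carried to $\ccP^\cC$. Assembling these identifications produces the sequence~(\ref{Birman4}).

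For the kernel equality, the epimorphism in~(\ref{Birman4}) coincides with the composition $\cGG(S_x)^\cC\xrightarrow{\hat{\Delta}^\cC_x}\cGG(S,x)^\cC\tura\cGG(S)^\cC$, where the second arrow is the quotient map in~(\ref{Birman1}). The kernel of this composition is $\ccP^\cC$, and since $\hat{\Delta}^\cC_x(\ccP^\cC)=\hP^\cC$ (being the closure of $\Pi$ in $\cGG(S,x)^\cC$), this forces $\ker\hat{\Delta}^\cC_x\subseteq\ccP^\cC$, whence $\ker\hat{\Delta}^\cC_x=\ker(\hat{\Delta}^\cC_x|_{\ccP^\cC})$.

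The main delicate step will be making the identification $\cGG(S_{\circ,P})^\cC\cong\cGG(S)^\cC$ canonical and verifying that under it $\ccP_P^\cC$ corresponds to $\ccP^\cC$; this amounts to tracking the $x\leftrightarrow P$ topological swap through the various mapping-class-group isomorphisms and checking compatibility with the fundamental-group actions defining the respective congruence topologies.
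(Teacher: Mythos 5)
Your overall strategy --- quotient the split sequence~(\ref{4.3}) by $\tau_u^{\ZZ^\cC}$, identify the three quotient terms, and then read off the kernel statement by comparing with~(\ref{Birman1}) --- is exactly the paper's (one-line) argument, and your final deduction of $\ker\hat{\Delta}^\cC_x=\ker(\hat{\Delta}^\cC_x|_{\ccP^\cC})$ from~(\ref{Birman4}) is correct. However, the justification you give for the key verification $\tau_u^{\ZZ^\cC}\cap\ccP_P^\cC=\{1\}$ is wrong. The section $s_\ast$ is induced by the embedding $s\co S_{\circ,P}\hookra S_\circ$ whose image is the complementary component of $\delta=s(\dd S_{\circ,P})$ \emph{not} containing $\dd S_\circ$ and the puncture $P$; hence $s_\ast$ carries the boundary twist of $S_{\circ,P}$ to $\tau_\delta$, not to $\tau_u$. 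Since $\dd S_\circ$ and $\delta$ cobound a once-punctured annulus containing $P$, these curves are not isotopic in $S_\circ$, so $\tau_u\neq\tau_\delta$; in fact $\tau_u\tau_\delta^{-1}$ is a nontrivial point-pushing element of $\Pi_P$. In particular $\tau_u$ does \emph{not} lie in $s_\ast\bigl(\cGG(S_{\circ,P},\dd S_{\circ,P})^\cC\bigr)$, and your stated reason fails.

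The conclusion you need is nevertheless true and easily repaired: $\check{\Psi}^\cC$ sends $\tau_u$ to the boundary twist $\tau_{u_P}$ of $S_{\circ,P}$, which by the analogue of~(\ref{4.2}) for $S_{\circ,P}$ generates a procyclic subgroup isomorphic to $\ZZ^\cC$; hence $\check{\Psi}^\cC$ is injective on $\tau_u^{\ZZ^\cC}$ and therefore $\tau_u^{\ZZ^\cC}\cap\ker\check{\Psi}^\cC=\tau_u^{\ZZ^\cC}\cap\ccP_P^\cC=\{1\}$. With this correction the rest of your argument goes through: the identification of the right-hand quotient with $\cGG(S_{\circ,P})^\cC$, the relabelling exchanging the roles of $x$ and $P$ (which you rightly flag as requiring care --- one can avoid the swap entirely by taking the real oriented blow-up at a puncture of $S$ and letting $P:=x$, which is precisely what the hypothesis that $S$ be nonclosed is for), and the density argument identifying the epimorphism of~(\ref{Birman4}) with the composite $\cGG(S_x)^\cC\to\cGG(S,x)^\cC\to\cGG(S)^\cC$.
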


\subsection{A $\cC$-congruence subgroup property for genus~$0$ mapping class groups}\label{genus 0}
In the genus $0$ case, the results of the previous section imply the weak version of the subgroup $\cC$-congruence property, for $\cC$ 
an arbitrary class of finite groups, and the standard version for $\cC=\cF$ or $(p)$ (cf.\ Definition~\ref{weak_strong}):

\begin{corollary}\label{congruence_0}Let $\cC$ be a class of finite groups, $S$ a connected surface of genus $0$ and negative Euler characteristic 
and $x$ a point on $S$.
\begin{enumerate}
\item There are natural epimorphism $\cGG(S_x)^\cC\tura\hGG(S_x)^\cC$ and $\cGG(S,x)^\cC\tura\hGG(S,x)^\cC$.
\item Let $\cC$ be such that $\check{\Pi}^\cC\equiv\hP^\cC$ for all surfaces $S$ of genus $0$ (e.g.\ $\cC=\cF$ or $(p)$). 
Then, we have $\cGG(S_x)^\cC\equiv\hGG(S_x)^\cC\equiv\cGG(S,x)^\cC\equiv\hGG(S,x)^\cC$.
\end{enumerate}
\end{corollary}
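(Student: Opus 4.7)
The plan is to establish both statements together by induction on $n\geq 3$, where $S=S_{0,n}$, using the Birman exact sequence~(\ref{Birman4}) as the main tool.

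In the base case $n=3$ the group $\GG(S)=\GG_{0,3}$ is trivial, so~(\ref{Birman4}) collapses to an isomorphism $\cGG(S_x)^\cC\cong\ccP^\cC$, while the classical Birman sequence collapses to $\GG(S_x)\cong\Pi$, so that $\hGG(S_x)^\cC\cong\hP^\cC$. The required epimorphism $\cGG(S_x)^\cC\tura\hGG(S_x)^\cC$ is then precisely Proposition~\ref{epi}, and becomes an isomorphism under the hypothesis of~(ii). The corresponding assertions for $\cGG(S,x)^\cC$ are obtained by composing with the epimorphism $\hat\Delta^\cC_x\co\cGG(S_x)^\cC\tura\cGG(S,x)^\cC$ of Section~\ref{tangential}, which is an isomorphism in case~(ii) by Corollary~\ref{faithful}.

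For the inductive step, I would exploit the fact that $S$ always has at least one puncture, so the classical Birman exact sequence $1\to\Pi\to\GG(S_x)\to\GG(S)\to 1$ splits; its pro-$\cC$ completion is then a split short exact sequence with kernel $\hP^\cC$. On the other hand, Theorem~\ref{semidirect} provides the split exact sequence~(\ref{4.3}), and the central quotient by $\tau_u^{\ZZ^\cC}$ used in the derivation of~(\ref{Birman4}) preserves the splitting, because the image of the boundary twist in the right-hand group is again central. This yields a commutative diagram of split short exact sequences, in which the top row is~(\ref{Birman4}) and the bottom is the pro-$\cC$ Birman sequence, with the left vertical $\ccP^\cC\tura\hP^\cC$ coming from Proposition~\ref{epi} and the right vertical $\cGG(S)^\cC\tura\hGG(S)^\cC$ from the inductive hypothesis. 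The middle vertical is then defined via the universal property of the semidirect product, and its surjectivity follows from the five-lemma, proving~(i). Under the hypothesis of~(ii), the two outer verticals are isomorphisms, so is the middle; combined with Corollary~\ref{faithful} this yields the chain of isomorphisms $\cGG(S_x)^\cC\equiv\cGG(S,x)^\cC\equiv\hGG(S_x)^\cC\equiv\hGG(S,x)^\cC$.

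The delicate point I anticipate is the compatibility needed to assemble the middle vertical: one must verify that the action of $\cGG(S)^\cC$ on $\ccP^\cC$ induced by the splitting of~(\ref{Birman4}) descends, along the outer vertical epimorphisms, to the action of $\hGG(S)^\cC$ on $\hP^\cC$ from the pro-$\cC$ Birman sequence. This is ultimately a continuity-and-naturality statement, reducing to the fact that both actions extend the discrete conjugation action of $\GG(S)$ on $\Pi$, but some care is required to choose sections on both rows that are images of a common section of the discrete Birman sequence, so that the two semidirect product structures actually fit into a morphism of split extensions.
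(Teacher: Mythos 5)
Your overall strategy---induction on the number of punctures starting from the $3$-punctured sphere, with the Birman exact sequences as the engine---is the same as the paper's, and your base case is fine. But the inductive step has two genuine gaps, both stemming from the splitting apparatus you introduce (which the paper does not use).

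First, the splitting of~(\ref{4.3}) does \emph{not} descend to a splitting of~(\ref{Birman4}). The section $s_\ast$ of Theorem~\ref{semidirect} sends the boundary twist $\tau_{u'}$ of $\dd S_{\circ,P}$ to the Dehn twist $\tau_\delta$ about $\delta=s(\dd S_{\circ,P})$, and $\delta$ is a separating curve on $S_\circ$ cutting off \emph{both} the boundary $\dd S_\circ$ and the puncture $P$; after collapsing $\dd S_\circ$ to a puncture, $\delta$ bounds a twice-punctured disc, so $\tau_\delta$ is a nontrivial, nonperipheral twist in $\cGG(S_x)^\cC$ and is not a power of $\tau_u$. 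Hence the composite of $s_\ast$ with the quotient by $\tau_u^{\ZZ^\cC}$ does not kill $\tau_{u'}^{\ZZ^\cC}$ and does not factor through $\cGG(S)^\cC$. What is true is that the quotient \emph{map} $\check{\Psi}^\cC$ is compatible with the central subgroups (which is why~(\ref{Birman4}) exists); the section is not, and your phrase about the boundary twist being ``again central'' conflates the two. Second, the assertion that the pro-$\cC$ completion of the (split) discrete Birman sequence is a split short exact sequence \emph{with kernel $\hP^\cC$} is unjustified: the kernel of $\hGG(S_x)^\cC\to\hGG(S)^\cC$ is only the closure of $\Pi$, a priori a proper quotient of $\hP^\cC$. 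This is precisely the failure of left exactness of the pro-$\cC$ completion functor flagged in Remark~\ref{obstruction}, and for item~(i) you are not entitled to assume it --- it is essentially what part~(ii) is about.

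The paper sidesteps both issues: it compares the two (non-split) exact sequences~(\ref{Birman4}) and~(\ref{Birman1}), which share the quotient $\cGG(S)^\cC$, via the vertical epimorphisms $\ccP^\cC\tura\hP^\cC$ (Proposition~\ref{epi}) and $\hat{\Delta}^\cC_x$, and runs the induction on $-\chi(S)$ directly on this ladder. The point is that no section is needed: to produce the epimorphism onto the pro-$\cC$ completion one only has to check that a profinite topology on an extension is finer than the pro-$\cC$ topology whenever its restriction to the kernel and its quotient topology are, which is a diagram/centralizer argument, not a semidirect-product one. If you want to salvage your write-up, replace the splitting argument by this comparison of exact sequences; the ``delicate point'' you flag at the end then disappears along with the sections.
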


\begin{proof}By the Birman short exact sequences~(\ref{Birman1}) and (\ref{Birman4}), 
there is a commutative diagram with exact rows: 
\[\begin{array}{ccccccccl}
1&\to&\ccP^\cC&\to&\cGG(S_x)^\cC&\to&\cGG(S)^\cC&\to&1\\
&&\da\hspace{0.3cm}&&\da\sst{\hat{\Delta}^\cC_x} &&\parallel\,\,&&\\
1&\to&\hP^{\cC}&\to&\cGG(S,x)^\cC&\to&\cGG(S)^{\cC}&\to&1,
\end{array}\]
where all vertical maps are surjective. Induction on $-\chi(S)$, starting with $S$ a $3$-punctured sphere ($-\chi(S)=1$), 
for which $\GG(S)=\{1\}$, then yields both items of the corollary.
\end{proof}

\begin{remark}\label{mamoru}For $\cC=\cF$ and $\cC=(p)$, item (ii) of Corollary~\ref{congruence_0} was already proved by Asada in \cite{Asada}.
\end{remark}

\section[$\cC$-congruence subgroup properties]{$\cC$-congruence subgroup properties for hyperelliptic mapping class groups}\label{congruence}
The proof of Theorem~\ref{main} is based on the interpretation of
hyperelliptic mapping class groups as fundamental groups of moduli stacks of complex hyperelliptic curves. 
Let us recall a few facts from Section~3 of \cite{hyp}. The moduli stack of $n$-pointed, genus $g$ smooth hyperelliptic curves $\cH_{g,n}$
can be described in terms of moduli of pointed genus $0$ curves. Indeed,
there is a natural $\Z/2$-gerbe $\cH_{g}\to\cM_{0,[2g+2]}$, for $g\geq 2$, defined assigning, to a 
genus $g$ hyperelliptic curve $C$, the genus zero quotient curve $C/\iota$ marked by the branch locus of the finite morphism 
$C\to C/\iota$, where $\iota$, as usual, denotes the hyperelliptic involution. Similarly,
in the genus $1$ case, there is a $\Z/2$-gerbe $\cM_{1,1}\to\cM_{0,1[3]}$, where, by the notation "$1[3]$", we mean that 
one label is distinguished from the others (which instead are unordered). 

For $2g-2+n>0$, there is also a natural representable morphism $\cH_{g,n+1}\to\cH_{g,n}$, forgetting the 
$(n+1)$-th marked point, which is isomorphic to the universal $n$-punctured curve over $\cH_{g,n}$.
Thus, the fibre above an arbitrary closed point $x\in\cH_{g,n}$ is diffeomorphic to $S_{g,n}$ and its
fundamental group is isomorphic to $\Pi_{g,n}$. These morphisms induce, on 
topological fundamental groups, short exact sequences, for $g\geq 2$:
\begin{equation}\label{hyp1}
1\to\Z/2\to \UU_g\to\GG_{0,[2g+2]}\to 1\,\,\,\,
\mbox{ and }\,\,\,\,
1\to\Pi_{g,n}\to \UU_{g,n+1}\to \UU_{g,n}\to 1.
\end{equation}
For the algebraic fundamental groups, there are then short exact sequences (here and in the sequel, in order to simplify the notations, 
we omit to mention base points):
\begin{equation}\label{hyp2}
1\!\to\!\Z/2\!\to\!\pi_1^\mathrm{alg}(\cH_g)\!\to\!\pi_1^\mathrm{alg}(\cM_{0,[2g+2]})\!\to\! 1\,\,
\mbox{ and }\,\,
1\!\to\!\hP_{g,n}\!\to\! \pi_1^\mathrm{alg}(\cH_{g,n+1})\!\to\!\pi_1^\mathrm{alg}(\cH_{g,n})\!\to\! 1,
\end{equation}
where, for all $n\geq 0$, the groups $\pi_1^\mathrm{alg}(\cH_{g,n})$ and $\pi_1^\mathrm{alg}(\cM_{0,[2g+2]})$ identify, respectively, with 
the profinite completions $\hUU_{g,n}$ of $\UU_{g,n}$ and $\hGG_{0,[2g+2]}$ of $\GG_{0,[2g+2]}$. For $g=1$, we just need to replace the short exact
sequences which appear on the left in (\ref{hyp1}) and (\ref{hyp2}) above with $1\to\Z/2\to \GG_{1,1}\to\GG_{0,1[3]}\to 1$ and 
$1\to\Z/2\to\pi_1^\mathrm{alg}(\cM_{1,1})\to\pi_1^\mathrm{alg}(\cM_{0,1[3]})\to 1$, respectively.

The outer representation $\hat{\varrho}_{g,n}\co\pi_1^\mathrm{alg}(\cH_{g,n})\to\out(\hP_{g,n})$, associated to the left
short exact sequence in (\ref{hyp2}), is the algebraic monodromy representation of the punctured 
universal curve over $\cH_{g,n}$. The $\cF$-congruence subgroup property for the hyperelliptic mapping class group
$\UU_{g,n}$ is then equivalent to the faithfulness of $\hat{\varrho}_{g,n}$ (cf.\ \cite{hyp}).

For $\cC$ a class of finite groups, the kernel of the natural epimorphism $\hP_{g,n}\tura\hP_{g,n}^\cC$ is characteristic. 
Therefore, the representation $\hat{\varrho}_{g,n}$ induces a representation:
\[\hat{\varrho}_{g,n}^\cC\co\pi_1^\mathrm{alg}(\cH_{g,n})\to\out(\hP_{g,n}^\cC),\]
which we call the \emph{universal hyperelliptic pro-$\cC$ monodromy representation}. The image of this representation is isomorphic to
the $\cC$-congruence completion $\cUU_{g,n}^\cC$. Its kernel then identifies with the intersection $\cap_{\ld\in\Ld}U_\ld$ of all $\cC$-congruence 
open subgroups of $\hUU_{g,n}\cong\pi_1^\mathrm{alg}(\cH_{g,n})$. In particular, the kernel of $\hat{\varrho}_{g,n}^\cC$ does not change if we restrict 
this representation to a $\cC$-congruence open subgroup $U_\ld$ of $\hUU_{g,n}$.

For $\Z/2\in\cC$, the profinite completion $\hUU(2)$ of the abelian level $\UU(2)$ of $\UU_{g,n}$ is a $\cC$-congruence open subgroup of $\hUU_{g,n}$. 
Let then $\cK^\cC_{g,n}$ be the (characteristic) kernel of the natural epimorphism from the profinite completion to the pro-$\cC$ completion
$\hUU(2)\to\hUU(2)^\cC$. 

Item (i) of Theorem~\ref{main} is equivalent to the statement that, if $\Z/2\in\cC$, the epimorphism $\hUU(2)\to\hUU(2)^\cC$ factors through the 
representation $\hat{\varrho}_{g,n}^\cC$ and an epimorphism:
\[\cUU(2)^\cC\tura\hUU(2)^\cC\hspace{1cm}\mbox{ or equivalently }\hspace{1cm}\ker\hat{\varrho}_{g,n}^\cC\subseteq\cK^\cC_{g,n}.\]

Item (ii) of Theorem~\ref{main} is equivalent to the statement that, under the given hypothesis, the epimorphism $\hUU(2)\to\hUU(2)^\cC$ 
factors through the representation $\hat{\varrho}_{g,n}^\cC$ and an isomorphism:
\[\cUU(2)^\cC\sr{\sim}{\to}\hUU(2)^\cC\hspace{1cm}\mbox{ or equivalently }\hspace{1cm}\ker\hat{\varrho}_{g,n}^\cC=\cK^\cC_{g,n}.\]

The \'etale covering $\cH^\ld\to\cH_{g,n}$ associated to a (geometric) level $\UU^\ld$ of $\UU_{g,n}$ is called
\emph{a (geometric) level structure} over $\cH_{g,n}$. The abelian level structure $\cH^{(m)}$ of order $m$, for $m\geq 2$,
is the covering associated to the abelian level $\UU(m)$ of $\UU_{g,n}$.
There is a standard procedure to simplify the structure of an algebraic stack $X$ by 
erasing a generic group of automorphisms $G$. The algebraic stack thus obtained is usually denoted by $X\!\!\fatslash G$. 
The natural map $\cH_g\to\cM_{0,[2g+2]}$ yields an isomorphism 
$\cH_g\!\!\fatslash\langle\iota\rangle\cong\cM_{0,[2g+2]}$. In Proposition~3.3 in \cite{hyp}, it was observed that the pull-back over $\cH_g$ 
of the Galois \'etale covering $\cM_{0,2g+2}\to\cM_{0,[2g+2]}$ is the abelian level structure $\cH_g^{(2)}$. More precisely:

\begin{proposition}\label{ordered}For $g\geq 2$, there is a natural isomorphism 
$\cH^{(2)}\!\!\!\fatslash\langle\iota\rangle\cong\cM_{0,2g+2}$. In particular, the abelian level of order $2$ of $\UU_g$ is described 
by the short exact sequence:
\begin{equation}\label{abelian_2}
1\to\langle\iota\rangle\to\UU(2)\to\GG_{0,2g+2}\to 1.
\end{equation}
\end{proposition}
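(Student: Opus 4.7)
The plan is to realize both sides of the isomorphism as étale covers of $\cH_g$ determined by explicit subgroups of $\UU_g$, and then identify those subgroups directly. The map $\cM_{0,2g+2}\to\cM_{0,[2g+2]}$ is a Galois étale cover with group $\Sigma_{2g+2}$, so its pullback along the gerbe $\cH_g\to\cM_{0,[2g+2]}$ is an étale $\Sigma_{2g+2}$-cover of $\cH_g$ classifying hyperelliptic curves together with an ordering of their Weierstrass points. Combining the exact sequence $1\to\GG_{0,2g+2}\to\GG_{0,[2g+2]}\to\Sigma_{2g+2}\to 1$ with the gerbe sequence $1\to\langle\iota\rangle\to\UU_g\to\GG_{0,[2g+2]}\to 1$, this pullback cover corresponds to $N:=\ker(\UU_g\tura\Sigma_{2g+2})$, which fits into $1\to\langle\iota\rangle\to N\to\GG_{0,2g+2}\to 1$. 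By definition $\cH^{(2)}\to\cH_g$ corresponds to $\UU(2)=\ker(\UU_g\to\Sp_{2g}(\Z/2))$, so the identification of covers reduces to the equality $\UU(2)=N$.

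For the inclusion $N\subseteq\UU(2)$, recall the classical identification of the $2$-torsion $H_1(C,\Z/2)$ of a hyperelliptic curve $C$ with the quotient $V:=(\Z/2)^{2g+2}_0/\langle\mathbf{1}\rangle$ of sum-zero tuples modulo the all-ones vector, under which the class $[e_i+e_j]$ corresponds to the Weierstrass difference $[w_i]-[w_j]$. The natural permutation action of $\Sigma_{2g+2}$ on $V$ preserves the induced symplectic form and yields a homomorphism $\pi\co\Sigma_{2g+2}\to\Sp_{2g}(\Z/2)$ making the triangle $\UU_g\to\Sigma_{2g+2}\sr{\pi}{\to}\Sp_{2g}(\Z/2)$ commute with the symplectic representation restricted to $\UU_g$. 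The reverse inclusion $\UU(2)\subseteq N$ is therefore equivalent to the injectivity of $\pi$ for $g\geq 2$.

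This injectivity is the key technical point. For $n:=2g+2\geq 6$, a direct check shows $[e_i+e_j]=[e_k+e_l]$ in $V$ if and only if $\{i,j\}=\{k,l\}$: the only alternative would require $\{i,j\}\sqcup\{k,l\}=\{1,\ldots,n\}$, which forces $n=4$. Hence any $\sigma\in\ker\pi$ must preserve every unordered pair $\{i,j\}$; intersecting two such preserved pairs $\{i,j\}$ and $\{i,k\}$ with common element $i$ then forces $\sigma(i)=i$, so $\sigma=\operatorname{id}$. (For $g=2$ one may alternatively note the classical exceptional isomorphism $\Sigma_6\cong\Sp_4(\Z/2)$, both groups having order $720$.)

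Once $\UU(2)=N$ is established, the pullback $\cH_g\times_{\cM_{0,[2g+2]}}\cM_{0,2g+2}$ is on one side the étale cover $\cH^{(2)}$ of $\cH_g$ and on the other a $\Z/2$-gerbe over $\cM_{0,2g+2}$ obtained by base change from $\cH_g\to\cM_{0,[2g+2]}$. Rigidifying by $\langle\iota\rangle$ yields $\cH^{(2)}\!\!\fatslash\langle\iota\rangle\cong\cM_{0,2g+2}$, and the short exact sequence~(\ref{abelian_2}) is precisely the gerbe sequence $1\to\langle\iota\rangle\to N\to\GG_{0,2g+2}\to 1$ read through the identification $N=\UU(2)$. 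The main obstacle throughout is the injectivity of $\pi$---equivalently, the combinatorial rigidity preventing any nontrivial permutation of the $2g+2$ Weierstrass points from acting trivially on $H_1(C,\Z/2)$.
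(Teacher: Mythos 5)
Your proof is correct and follows essentially the same route as the paper, which obtains the result by identifying $\cH^{(2)}$ with the pullback of the $\Sigma_{2g+2}$-cover $\cM_{0,2g+2}\to\cM_{0,[2g+2]}$ along the gerbe $\cH_g\to\cM_{0,[2g+2]}$, deferring the verification to Proposition~3.3 of the cited earlier paper \cite{hyp}. The content you supply in detail --- that the mod-$2$ symplectic representation of $\UU_g$ factors through a faithful action of $\Sigma_{2g+2}$ on $H_1(C,\Z/2)\cong(\Z/2)^{2g+2}_0/\langle\mathbf{1}\rangle$, so that $\UU(2)=\ker(\UU_g\to\Sigma_{2g+2})$ --- is precisely the classical fact underlying that citation.
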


\begin{remark}Similarly, for the abelian level structure $\cM^{(2)}$ over $\cM_{1,1}$, there is a natural isomorphism 
$\cM^{(2)}\!\!\!\fatslash\langle\iota\rangle\cong\cM_{0,4}$, where $\iota$ here denotes the generic elliptic involution
and a short exact sequence $1\to\langle\iota\rangle\to\GG(2)\to\GG_{0,4}\to 1$. From now on, for simplicity of exposition, 
we will focus on the moduli spaces of hyperelliptic curves $\cH_{g,n}$, for $g\geq 2$, 
and leave to the reader the obvious reformulation and proof of the similar statements for the case $g=1$ and $n\geq 1$.
\end{remark}

Let $\cC_g\to\cH_g$, for $g\geq 2$, be the universal curve. Removing Weierstrass points from its fibres,
we obtain a $(2g+2)$-punctured, genus $g$ curve $\cC_0\to\cH_{g}$. Let $C_0$ be the fiber of $\cC_0\to\cH_{g}$ over the base point of 
the algebraic fundamental group $\hUU_g$ of $\cH_g$ and let us denote by 
\begin{equation}\label{rho_0}
\hat{\varrho}_0^\cC\co\hUU_g\hookra\out(\hp_1(C_0)^\cC)
\end{equation}
the associated algebraic monodromy representation.

By Proposition~\ref{ordered}, since $\iota\notin\UU(4)$, there is a natural representable \'etale Galois morphism 
$\cH^{(4)}\to\cM_{0,2g+2}$. Let $\cC_0^{(4)}\to\cH^{(4)}$ and ${\cal R}^{(4)}\to\cH^{(4)}$ be, respectively, the pull-backs 
of the curve $\cC_0\to\cH_g$ and of the universal $(2g+2)$-punctured, genus $0$ curve ${\cal R}\to\cM_{2g+2}$ over $\cH^{(4)}$.
There is then a commutative diagram:
\[\begin{array}{ccc}
\cC_0^{(4)}&\sr{\psi}{\to}&{\cal R}^{(4)}\\
&\searrow&\da\hspace{0.4cm}\\
&& \cH^{(4)},
\end{array}
\]
where $\psi$ is the \'etale, degree $2$ map which, fibrewise, is the natural map to the quotient by the hyperelliptic involution. 
Pick a base point of $ \cH^{(4)}$ so that the fiber over it of the curve $\cC_0^{(4)}\to\cH^{(4)}$ identifies with $C_0$ and
let $R$ be the fiber over the chosen base point of the curve ${\cal R}^{(4)}\to\cH^{(4)}$.
From the above diagram, we then get the algebraic monodromy representations: $\hUU(4)\to\out(\hp_1(C_0)^\cC)$, which is just the 
restriction of $\hat{\varrho}_0^\cC$ to $\hUU(4)$, has its same kernel and we denote in the same way, and 
$\hat{\varrho}_{\cal R}^\cC\co\hUU(4)\to\out(\hp_1(R)^\cC)$. We have:

\begin{lemma}\label{weak}If $\Z/2\in\cC$, the representation $\hat{\varrho}_0^\cC$~(\ref{rho_0}) has the same kernel of $\hat{\varrho}_{\cal R}^\cC$ 
so that the short exact sequence~(\ref{abelian_2}) induces one
\begin{equation}\label{abelian_2_pro}
1\to\langle\iota\rangle\to\hat{\varrho}_0^\cC(\hUU(2))\to\cGG_{0,2g+2}^\cC\to 1.
\end{equation}
Moreover, we have:
\begin{enumerate}
\item $\ker\hat{\varrho}_0^\cC\subseteq\cK^\cC_g$.
\item If the hypothesis of (ii) of Theorem~\ref{main} are satisfied, then $\ker\hat{\varrho}_0^\cC=\cK^\cC_g$.
\end{enumerate}
\end{lemma}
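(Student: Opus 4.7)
The plan is to establish first the main technical claim that $\ker\hat{\varrho}_0^\cC=\ker\hat{\varrho}_{\cal R}^\cC$, from which the exact sequence~(\ref{abelian_2_pro}) and items (i), (ii) will follow formally. Since $\cC$ is closed under extensions and $\Z/2\in\cC$, we have $(\Z/4)^{2g}\in\cC$, so the geometric level $\UU(4)$ is $\cC$-congruence open in $\UU_g$ and $\ker\hat{\varrho}_0^\cC\subseteq\hUU(4)$, the common domain on which both representations may be compared. The unramified double cover $\psi\co C_0\to R$ identifies $\pi_1(C_0)$ with an index $2$ subgroup of $\pi_1(R)$, and by Lemma~\ref{subsurface} (applied with $\Z/2\in\cC$), this extends to a closed embedding $\hp_1(C_0)^\cC\hookra\hp_1(R)^\cC$ of index $2$; a generator of the quotient is realized by any lift $A\in\hp_1(R)^\cC$ of $\iota$.

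For the inclusion $\ker\hat{\varrho}_{\cal R}^\cC\subseteq\ker\hat{\varrho}_0^\cC$, given $f\in\hUU(4)$ with $\hat{\varrho}_{\cal R}^\cC(f)=1$, I would choose a lift $\tilde f_R\in\aut(\hp_1(R)^\cC)$ preserving $\hp_1(C_0)^\cC$ (such a lift exists because $\GG_{0,2g+2}$ preserves each puncture of $R$ and hence the sign character classifying the cover). By hypothesis $\tilde f_R=\inn(A')$ is inner: either $A'\in\hp_1(C_0)^\cC$, giving $\hat{\varrho}_0^\cC(f)=1$, or $A'$ represents $\iota$, giving $\hat{\varrho}_0^\cC(f)=\hat{\varrho}_0^\cC(\iota)$ and so $f\iota^{-1}\in\ker\hat{\varrho}_0^\cC\subseteq\hUU(4)$, contradicting $\iota\notin\UU(4)$. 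For the reverse inclusion, take $f\in\ker\hat{\varrho}_0^\cC$ and again a lift $\tilde f_R\in\aut(\hp_1(R)^\cC)$ preserving $\hp_1(C_0)^\cC$; its restriction lifts $\hat{\varrho}_0^\cC(f)=1$, so is inner by some $g\in\hp_1(C_0)^\cC$, and after modifying $\tilde f_R$ by $\inn(g)^{-1}$ we may assume $\tilde f_R|_{\hp_1(C_0)^\cC}=\Id$. Writing $\tilde f_R(A)=Ac$ with $c\in\hp_1(C_0)^\cC$, the identity $\tilde f_R(AxA^{-1})=AxA^{-1}$ for $x\in\hp_1(C_0)^\cC$ forces $c$ to centralize $\hp_1(C_0)^\cC$ in $\hp_1(R)^\cC$. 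The crux is then to show $c=1$: picking two non-commuting primitive $x,y\in\pi_1(C_0)\subset\pi_1(R)$ (e.g.\ products of pairs of distinct peripheral loops, for which $g\geq 2$ leaves ample room) and applying Lemma~\ref{central} in $\hp_1(R)^\cC$ yields $c\in x^{\ZZ^\cC}\cap y^{\ZZ^\cC}$; if $c\neq 1$, a second application of Lemma~\ref{central} to $c^{\ZZ^\cC}$ forces $x$ and $y$ into a common procyclic subgroup, contradicting $[x,y]\neq 1$. Hence $c=1$, $\tilde f_R=\Id$, and $\hat{\varrho}_{\cal R}^\cC(f)=1$. This centralizer-triviality argument is the main obstacle.

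Granted the equality of kernels, the exact sequence~(\ref{abelian_2_pro}) is obtained by applying $\hat{\varrho}_0^\cC$ to~(\ref{abelian_2}): surjectivity onto $\cGG_{0,2g+2}^\cC$ follows from density of the image of $\GG_{0,2g+2}$, and the kernel equals $\langle\iota_*\rangle$ because the non-congruence kernel of $\hGG_{0,2g+2}\to\cGG_{0,2g+2}^\cC$ lies in the image of $\hUU(4)$ in $\hGG_{0,2g+2}$ (the cover $\cH^{(4)}\to\cM_{0,2g+2}$ having $2$-power degree, this image is $\cC$-open and hence, by the weak congruence property in genus $0$, $\cC$-congruence open); the equality of kernels then forces $\hat{\varrho}_0^\cC$ of any such element to lie in $\langle\iota_*\rangle$. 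Item (i) follows by combining~(\ref{abelian_2_pro}) with the right-exact pro-$\cC$ sequence $\langle\iota\rangle\to\hUU(2)^\cC\to\hGG_{0,2g+2}^\cC\to 1$ coming from~(\ref{abelian_2}) and the weak congruence epimorphism $\cGG_{0,2g+2}^\cC\tura\hGG_{0,2g+2}^\cC$ of Corollary~\ref{congruence_0}(i): a short diagram chase produces the desired epimorphism $\cUU(2)^\cC\tura\hUU(2)^\cC$, equivalent to $\ker\hat{\varrho}_0^\cC\subseteq\cK_g^\cC$. Item (ii) is obtained identically, invoking the isomorphism $\cGG_{0,2g+2}^\cC\cong\hGG_{0,2g+2}^\cC$ of Corollary~\ref{congruence_0}(ii) under the stronger hypothesis to conclude $\cUU(2)^\cC\cong\hUU(2)^\cC$, i.e., $\ker\hat{\varrho}_0^\cC=\cK_g^\cC$.
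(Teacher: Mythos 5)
Your overall strategy is the same as the paper's: prove $\ker\hat{\varrho}_0^\cC=\ker\hat{\varrho}_{\cal R}^\cC$ by exploiting the index~$2$ embedding $\hp_1(C_0)^\cC\hookra\hp_1(R)^\cC$, then deduce the sequence~(\ref{abelian_2_pro}) and items (i), (ii) from Proposition~\ref{ordered} and the genus~$0$ result (Corollary~\ref{congruence_0}). Your argument for $\ker\hat{\varrho}_{\cal R}^\cC\subseteq\ker\hat{\varrho}_0^\cC$ (ruling out $\hat{\varrho}_0^\cC(f)=\hat{\varrho}_0^\cC(\iota)$ via $\iota\notin\UU(4)$) is the same in substance as the paper's, which phrases it through the action on $H_1(C_0,\Z/4)$. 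The concluding diagram chases for (\ref{abelian_2_pro}), (i) and (ii) are also what the paper intends, and the ingredients you list are the right ones.

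The one place where your written argument has a genuine flaw is the reverse inclusion $\ker\hat{\varrho}_0^\cC\subseteq\ker\hat{\varrho}_{\cal R}^\cC$, which the paper disposes of by invoking Lemma~\ref{asada} (the pro-$\cC$ analogue of Asada's Lemma~8) and which you instead prove by hand in the index~$2$ case. Your reduction to an element $c$ centralizing $\hp_1(C_0)^\cC$ is correct, but the final step is not: Lemma~\ref{central} applies to a \emph{primitive element of the discrete group} $\Pi$ and to normalizers of subgroups $x^{n\ZZ^\cC}$ with $n\in\Z\ssm\{0\}$, whereas your ``second application'' is to $c^{\ZZ^\cC}$ where $c$ is an arbitrary element of the profinite completion of the form $x^m$ with $m\in\ZZ^\cC$ not necessarily an integer; as stated the lemma gives you nothing there. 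The conclusion you need (the centralizer of an open subgroup of a nonabelian free pro-$\cC$ group is trivial) is true and standard, and can be obtained cleanly: if $c\neq 1$ centralizes the open subgroup $H=\hp_1(C_0)^\cC$, then $\ol{\langle H,c\rangle}$ is an open, hence (since $\cC$ is extension closed) free pro-$\cC$ and nonabelian, subgroup of $\hp_1(R)^\cC$ with nontrivial center, a contradiction. Alternatively, note that $c$ centralizes $x^2$ for every primitive $x\in\pi_1(R)$, so $c\in N_{\hp_1(R)^\cC}(x^{2\ZZ^\cC})=x^{\ZZ^\cC}$ by a legitimate use of Lemma~\ref{central}, and then argue from there. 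A further small miscitation: the open embedding $\hp_1(C_0)^\cC\hookra\hp_1(R)^\cC$ does not follow from Lemma~\ref{subsurface}, which concerns subsurfaces rather than finite covers; it follows from $\Z/2\in\cC$ and extension-closedness. Likewise, ``$2$-power degree'' alone does not make the image of $\hUU(4)$ $\cC$-open in $\hGG_{0,2g+2}$; you should use that this image is normal with quotient a subgroup of $\ker(\Sp_{2g}(\Z/4)\to\Sp_{2g}(\Z/2))$, hence a $2$-group lying in $\cC$. All of these are repairable, and none affects the architecture of the proof.
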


\begin{proof}Before we start the proof of the lemma, let us remark that the subgroup $\cK^\cC_g$ of $\hUU(2)$  is contained in any
$\cC$-open subgroup of this group and thus, in particular, is contained in $\hUU(4)$.
In fact, the level $\UU(4)$ has index a power of $2$ in $\UU(2)$ and hence its profinite completion $\hUU(4)$
is a $\cC$-open normal subgroup of $\hUU(2)$.

Since $\Z/2\in\cC$, the group $\hp_1(C_0)^\cC$ identifies with an open normal subgroup of $\hp_1(R)^\cC$
of index $2$. The inclusion $\ker\hat{\varrho}_0^\cC\subseteq\ker\hat{\varrho}_{\cal R}^\cC$ then follows from:

\begin{lemma}\label{asada}Let $\wh{F}^\cC$ be a free pro-$\cC$ group of finite rank and $N$ an open normal subgroup of $\wh{F}^\cC$.
If $\alpha$ is an automorphism of $\wh{F}^\cC$ which restricts to the identity on $N$, then $\alpha$ is the identity automorphism.
\end{lemma}

\begin{proof}The proof is essentially the same as the proof of Lemma~8 in \cite{Asada}. The only difference is that, where Asada appeals to 
his identity $(1.3.1)$, we need to use Lemma~\ref{central}.
\end{proof}

To see the other inclusion $\ker\hat{\varrho}_{\cal R}^\cC\subseteq\ker\hat{\varrho}_0^\cC$, let us observe that, if $f\in\ker\hat{\varrho}_{\cal R}^\cC$,
the element $\hat{\varrho}_0^\cC(f)$ acts on $\hp_1(C_0)^\cC$ through a power of the hyperelliptic involution. But then 
$\hat{\varrho}_0^\cC(f)=1$, since $\hat{\varrho}_0^\cC(f)$ acts trivially on $H_1(C_0,\Z/4)$ while the hyperelliptic involution acts
on $H_1(C_0,\Z/4)$ by multiplication by $-1$.

Now, since $\iota\notin\hUU(4)$, the natural epimorphism $\hUU(2)\tura\hGG_{2g+2,0}$ identifies $\hUU(4)$ with a $\cC$-open subgroup of 
$\hGG_{2g+2,0}$ and then the subgroup $\cK^\cC_g$ of $\hUU(4)$ is identified with the kernel of the natural epimorphism 
$\hGG_{2g+2,0}\tura\hGG_{2g+2,0}^\cC$. The short exact sequence~(\ref{abelian_2_pro}) and items (i), (ii) of the lemma now follow 
from Corollary~\ref{congruence_0}, Proposition~\ref{ordered} and the part of the lemma which we already proved.
\end{proof}

We can now prove a special case of Theorem~\ref{main}:

\begin{lemma}\label{final step}For $\cC$ a class of finite groups such that $\Z/2\in\cC$, we have:
\begin{enumerate}
\item $\ker\hat{\varrho}_{g,2g+2}^\cC\subseteq\cK^\cC_{g,2g+2}$.
\item If the hypothesis of (ii) of Theorem~\ref{main} are satisfied, then $\ker\hat{\varrho}_{g,2g+2}^\cC=\cK^\cC_{g,2g+2}$.
\end{enumerate}
\end{lemma}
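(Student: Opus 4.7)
The strategy is to bootstrap Lemma~\ref{weak} to the setting of $\UU_{g, 2g+2}$ by using the natural morphism $\cH_g^{(2)} \to \cH_{g, 2g+2}$ obtained by marking the ordered Weierstrass points, the ordering being provided by the level~$2$ structure (cf.\ Proposition~\ref{ordered}). Since pulling back the universal $(2g+3)$-punctured curve over $\cH_{g, 2g+2}$ along this morphism recovers the Weierstrass-punctured universal curve $\cC_0$ of Lemma~\ref{weak}, the induced map $\phi\co\hUU(2) \to \hUU_{g, 2g+2}$ on algebraic fundamental groups satisfies the compatibility $\hat{\varrho}_{g, 2g+2}^\cC \circ \phi = \hat{\varrho}_0^\cC|_{\hUU(2)}$. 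Lemma~\ref{weak} then gives $\ker(\hat{\varrho}_{g, 2g+2}^\cC \circ \phi) \subseteq \cK_g^\cC$, with equality under the hypotheses of (ii) of Theorem~\ref{main}. A preliminary check is that $\phi$ lands in the abelian level~$2$ of $\UU_{g, 2g+2}$, which holds because $\UU(2) \subset \UU_g$ stabilizes each Weierstrass point individually.

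To upgrade this to a statement about $\hUU_{g, 2g+2}(2)$ itself, I would iterate the $\cC$-congruence Birman exact sequence of Corollary~\ref{comparison2} (or of Corollary~\ref{comparison} under the stronger hypothesis of (ii)):
\[
1 \to \ccP_{g, k}^\cC \to \cUU_{g, k+1}^\cC \to \cUU_{g, k}^\cC \to 1,
\]
for $k = 0, \ldots, 2g+1$ (with $\hP_{g, k}^\cC$ in place of $\ccP_{g, k}^\cC$ under the hypotheses of (ii)). Placed alongside the tower of profinite Birman sequences from~(\ref{hyp2}) and the induced tower of pro-$\cC$ Birman sequences, a snake-lemma diagram chase propagates the kernel containment at the base, provided by $\phi$ and Lemma~\ref{weak}, up to the level of $\cUU_{g, 2g+2}(2)^\cC$, yielding both items of the conclusion. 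The propagation is inductive in $k$, identifying at each step $\ker \hat{\varrho}_{g, k+1}^\cC \cap \cC$-open-congruence subgroup with the corresponding pro-$\cC$ kernel in $\cK^\cC_{g, k+1}$.

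The main technical obstacle will be the diagram chase in the general setting of (i): the $\cC$-congruence Birman has kernel $\ccP_{g, k}^\cC$ rather than $\hP_{g, k}^\cC$, so one cannot invoke the five lemma directly. Instead, I would use the natural epimorphism $\ccP_{g, k}^\cC \tura \hP_{g, k}^\cC$ from Proposition~\ref{epi} and carefully track the induced maps on kernels, which suffices to yield only the containment asserted in (i) rather than the equality of (ii). A secondary point is that, in order to apply the inductive step uniformly, one must verify that the image $\phi(\hUU(2))$ sits coherently inside the iterated Birman filtration on $\hUU_{g, 2g+2}(2)$; this follows by compatibility of the iterated forgetful maps $\cH_{g, k+1} \to \cH_{g, k}$ with the Weierstrass-marking morphism $\cH_g^{(2)} \to \cH_{g, 2g+2}$.
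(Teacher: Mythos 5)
Your setup is the right one --- the section $s\co\cH_g^{(2)}\to\cH_{g,2g+2}^{(2)}$ given by the ordered Weierstrass points, the compatibility $\hat{\varrho}_{g,2g+2}^\cC\circ s_\ast=\hat{\varrho}_0^\cC$, and the reduction to Lemma~\ref{weak} are exactly how the paper begins. But the ``snake-lemma diagram chase'' you invoke to propagate the kernel containment does not close up, and the missing step is precisely the hard part of the paper's argument. Writing $\hUU_{g,2g+2}(2)\cong\hP_g(2g+2)\rtimes\hUU_g(2)$, an element $f=u\cdot s_\ast(\bar f)$ lies in $\ker\hat{\varrho}_{g,2g+2}^\cC$ if and only if the image of $u$ in $\ccP_g(2g+2)^\cC$ equals the inverse of the image of $s_\ast(\bar f)$ in $\hat{\varrho}_0^\cC(\hUU_g(2))$. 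Your chase only controls the two ``pure'' cases ($u\in\ker$, resp.\ $\bar f\in\ker\hat{\varrho}_0^\cC$), via Proposition~\ref{epi} and Lemma~\ref{weak} respectively; it says nothing about mixed elements unless one first proves that
\[\ccP_g(2g+2)^\cC\cap\hat{\varrho}_0^\cC(\hUU_g(2))=\{1\}\quad\text{inside }\cUU_{g,2g+2}(2)^\cC,\]
i.e.\ that the semidirect product decomposition descends to the $\cC$-congruence completion. Equivalently, if you run the snake lemma on the Birman tower, the kernel appearing on the quotient side is $\hat{\varrho}_0^{\cC,-1}\bigl(\ccP_g(2g+2)^\cC\cap\hat{\varrho}_0^\cC(\hUU_g(2))\bigr)$, which is only identified with $\ker\hat{\varrho}_0^\cC$ (the group Lemma~\ref{weak} controls) once the displayed intersection is shown to be trivial. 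There is also a secondary conflation in your base case: the quotient of the $k=0$ Birman sequence is the congruence completion with respect to the \emph{unpunctured} genus-$g$ curve, whereas Lemma~\ref{weak} controls the kernel of the monodromy of the \emph{Weierstrass-punctured} curve; these kernels are not interchangeable.

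The paper establishes the displayed triviality by a genuinely new idea that does not appear in your proposal: since $\hat{\varrho}_0^\cC(\hUU_g(2))$ centralizes the hyperelliptic involution $\iota$, it suffices to show that the centralizer of $\iota$ in $\ccP_g(2g+2)^\cC$ is trivial. This is reduced, via the configuration-space filtration~(\ref{hypconf}), to showing that $\langle\iota\rangle$ is self-normalizing in $\ccP_{g,n}^\cC\cdot\langle\iota\rangle$ (Lemma~\ref{orbifold}), and ultimately to a representation-theoretic statement (Lemma~\ref{chevalley}) about the $G$-module structure of $H^1(S_N,\C)$ for finite covers $S_N$ of the orbifold $[S/\iota]$, using the Chevalley--Weil-type multiplicity formula. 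Without this input --- or some substitute for it --- neither containment in (i) nor the equality in (ii) follows from the diagram chase you describe.
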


\begin{proof}The universal curve $\cC^{(2)}\to\cH_g^{(2)}$ over the abelian level structure of order $2$ is endowed with $2g+2$ sections 
corresponding to the Weierstrass points on the fibres. Let us fix an order on this set of sections. 
By the universal property of the level structure $\cH_{g,n}^{(2)}$, there is then a morphism 
$s\co\cH_g^{(2)}\to\cH_{g,2g+2}^{(2)}$, which is a section of the natural projection 
$p\co\cH_{g,2g+2}^{(2)}\to\cH_g^{(2)}$ (forgetting the labels). The morphism $p$ is 
smooth and its fibre above a closed point $[C]\in\cH_g$ is the configuration space 
of $2g+2$ points on the curve $C$. 

Let us denote by $S_g(n)$ the configuration space of
$n$ points on the closed Riemann surface $S_g$ and by $\Pi_g(n)$ its fundamental group.
All fibres of $p$ above closed points of $\cH_g$ are diffeomorphic to $S_g(2g+2)$. Therefore,
we have a semidirect product decomposition of the fundamental group $\UU_{g,2g+2}(2)$ of $\cH_{g,2g+2}^{(2)}$:
\[\UU_{g,2g+2}(2)=\Pi_g(2g+2)\rtimes\UU_g(2).\]
In Section~3 of \cite{hyp}, we saw that this decomposition induces one:
\[\hUU_{g,2g+2}(2)\cong\hP_g(2g+2)\rtimes\hUU_g(2).\]

For $n\geq 1$, let us denote by $\ccP_g(2g+2)^\cC$ the closure of $\Pi_g(2g+2)$ in the $\cC$-congruence completion $\cUU_{g,2g+2}^\cC$.
By Corollary~\ref{comparison2}, for all $n\geq 1$, there is then a short exact sequence:
\begin{equation}\label{hypconf}
1\to\ccP_{g,n}^\cC\to\ccP_g(n+1)^\cC\to\ccP_g(n)^\cC\to 1.
\end{equation}
Hence, by induction on $n$ and (i) of Proposition~\ref{epi}, we see that, for all $n\geq 1$, there is a natural epimorphism $\ccP_g(n)^\cC\to\hP_g(n)^\cC$.

By Lemma~\ref{weak}, in order to prove Lemma~\ref{final step}, it is then enough to show that the $\cC$-congruence completion
of $\UU_{g,2g+2}(2)$ admits a semidirect product decomposition:
\begin{equation}\label{hypdec}
\cUU_{g,2g+2}(2)^\cC\cong\ccP_g(2g+2)^\cC\rtimes\hat{\varrho}_0^\cC(\hUU_g(2)).
\end{equation}
To prove the existence of the decomposition~(\ref{hypdec}), we have to show that $\ccP_g(2g+2)^\cC$ has trivial intersection with 
$\hat{\varrho}_0^\cC(\hUU_g(2))$ inside the pro-$\cC$ congruence hyperelliptic mapping class group $\cUU_{g,2g+2}(2)^\cC$. 

As in the proof of Lemma~3.7 in \cite{hyp}, the key observation is that the group $\hat{\varrho}_0^\cC(\hUU_g(2))$ centralizes
the hyperelliptic involution $\hat{\varrho}_0^\cC(\iota)$, which we simply denote by $\iota$ (as well all the images of this element in the groups
$\cUU_{g,n}(2)^\cC$, for $0\leq n<2g+2$). To conclude, we then have to show that instead the centralizer of $\iota$ in $\ccP_g(2g+2)^\cC$ is trivial.
By the short exact sequence~(\ref{hypconf}) and induction on $n$, this last claim is implied by the lemma:

\begin{lemma}\label{orbifold}For all $0\leq n\leq 2g+1$ (where, for $g=1$, we let $n\geq 1$), the subgroup $\langle\iota\rangle$ is self-normalizing in 
$\ccP_{g,n}^\cC\cdot\langle\iota\rangle$.
\end{lemma}

\begin{proof}The subgroup $\Pi_{g,n}\cdot\langle\iota\rangle$ of the hyperelliptic mapping class group $\UU_{g,n+1}$ identifies with the 
fundamental group of the orbifold quotient by the hyperelliptic involution of the punctured hyperelliptic Riemann surface obtained from a closed
hyperelliptic Riemann surface $(S_g,\iota)$ of genus $g$ removing $n$ of its $2g+2$ Weierstrass points.

The first remark is then that we can reduce to the case $n=0$. In fact, $\Pi_{g,n}$ identifies with a subgroup of the fundamental group $\pi_1(S,x)$ of 
the closed hyperelliptic Riemann surface $S$ of genus $g+n$ obtained from $S_{g,n}$ capping each of the $n$ punctures with a surface of genus $1$, 
where we let $x\in S_{g,n}\subset S$ be one of the Weierstrass points of $S_g$ which has not been removed.
An immediate consequence of Theorem~\ref{stabilizer} is then that the closure of 
$\Pi_{g,n}$ in the $\cC$-congruence completion $\cGG(S_x)^\cC$ is $\ccP_{g,n}^\cC$, where we identify $\pi_1(S,x)$ with the kernel of the 
natural epimorphism $\GG(S_x)\to\GG(S)$ and $\Pi_{g,n}$ with a subgroup of $\pi_1(S,x)$ as explained above.
For $n=0$, Lemma~\ref{orbifold} and then Lemma~\ref{final step} follow from the lemma:

\begin{lemma}\label{chevalley}Let $[S/\iota]$ be the orbifold quotient of a hyperelliptic closed Riemann surface $S$ by the hyperelliptic involution $\iota$ 
and $\pi\co S_N\to[S/\iota]$ the unramified covering associated to a finite index normal proper subgroup $N$ of $\Pi:=\pi_1(S,x)$ preserved by $\iota$, 
so that the covering transformation group of $\pi$ is naturally isomorphic to $G:=G_N\rtimes\langle\iota\rangle$, where $G_N:=\Pi/N$. 
Then, $S_N$ is not hyperelliptic and $\iota$ is self-centralizing in $G$.
\end{lemma}

\begin{proof}We will show that the lemma is a consequence of the representation theory of the $G$-module $H^1(S_N,\C)$. By restriction, this is also
a $G_N$-module and its $G$-module and $G_N$-module structures are related in the following way (cf.\ Proposition~5.1 in \cite{FH}). Let $V\neq\{0\}$ 
be an irreducible $G$-submodule of $H^1(S_N,\C)$, there are then two possibilities: 
\begin{enumerate}
\item $V$ is also irreducible as $G_N$-module and $\iota$ acts on it either trivially or by multiplication by $-1$;
\item $V$ is the sum of two irreducible and not isomorphic $G_N$-submodules $W$ and $\iota(W)$. 
\end{enumerate}
Moreover, every irreducible $G$-submodule of $H^1(S_N,\C)$ is of one of the two above types. Let us show that the first possibility (i) occurs only 
when $V$ is a trivial $G_N$-submodule of $H^1(S_N,\C)$ on which $\iota$ acts by multiplication by $-1$. 

Let us consider first the case when $\iota$ acts trivially on $V$ and then when it acts by multiplication by $-1$. 
Let $Q_1,\ldots,Q_{2g+2}$ be the branch points of the covering $\pi\co S_N\to S/G$ and $G_{P_i}$ the corresponding stabilizer of a ramification point 
$P_i$ lying over $Q_i$, for $i=1,\ldots, 2g+2$, where $g\geq 2$ is the genus of $S$. These are cyclic groups of order $2$ generated by some conjugate 
of $\iota$. If $V$ is a nontrivial $G$-module, its multiplicity $\mu$ in $H^1(S_N,\C)$ is given by the formula (see, for instance, Corollary to Theorem~2 
and formula (5) in Section~1 of \cite{Kani}, but the formula can also be proven with elementary methods):
\[\mu=2\dim V(g(S_N/N)-1)+\sum_{i=1}^{2g+2}(\dim V-\dim V^{G_{P_i}}),\]
where $g(S_N/N)$ is the genus of the quotient $S_N/N$. If the $G$-module $V$ is trivial, its multiplicity, of course, is simply $\mu=2g(S_N/N)$.
Therefore, in case $\iota$ acts trivially on $V$, we have that $\dim V-\dim V^{\iota}=0$. By the above formula, since $g(S_N/N)=0$,
the multiplicity of the irreducible $G$-module $V$ in $H^1(S_N,\C)$ is then $-2\dim V$, an absurd. 

Let us then consider the case when $\iota$ acts by multiplication by $-1$ on $V$.
We have that $\dim V-\dim V^{\iota}=\dim V$, which, by the same formula, implies
that the irreducible $G$-module $V$ has multiplicity $2g\cdot\dim V$ in $H^1(S_N,\C)$. However, by our assumptions, the group $G_N$ operates 
freely on $S_N$. Hence, the same formula, applied to the covering $p_N\co S_N\to S$, implies that, as a $G_N$-module, $H^1(S_N,\C)$ consists of 
$2g-2$ copies of the regular representation plus two copies of the trivial $G_N$-module. Therefore, $\iota$ acts by multiplication by $-1$ 
exactly when $V$ is the trivial $G_N$-module, i.e.\ $V\subset H^1(S_N/G_N,\C)$. This also shows that $S_N$ is not hyperelliptic and, in particular,
$H^1(S_N,\C)^\iota\neq\{0\}$.

Let us now suppose that $\alpha\neq 1$ is an element of $G_N$ which commutes with $\iota$ and let $C_\alpha$ the cyclic subgroup of $G$ generated
by $\alpha$. Then $H^1(S_N,\C)^\iota$ is a $C_\alpha$-module and so it contains an irreducible $C_\alpha$-submodule $W\neq\{0\}$.
Since $C_\alpha\subseteq G_N$, there is an irreducible $G_N$-submodule $V$ of $H^1(S_N,\C)$ such that $W\subseteq V$. Now,
$\iota(V)\cap V\supseteq W$. Therefore, from the above discussion, it follows that $\iota(V)=V$ and $\iota$ acts on it by multiplication by $-1$, 
which cannot be because $V\supseteq W$. Thus, we conclude that $Z_G(\iota)=\langle\iota\rangle$.
\end{proof}
\end{proof}
\end{proof}

\begin{proof}[Proof of Theorem~\ref{main}](i): By the Birman exact sequence~(\ref{Birman4}), for $n\geq 1$, there is a commutative diagram with exact rows
\[\begin{array}{ccccccc}
1\to&\hP_{g,n}&\to&\hUU_{g,n+1}(2)&\to&\hUU_{g,n}(2)&\to 1\,\\
&\da\,\,\,\,\,&&\da{\st \hat{\varrho}_{g,n+1}^\cC}&&\,\,\da{\st \hat{\varrho}_{g,n}^\cC}&\\
1\to&\ccP_{g,n}^\cC&\to&\cUU_{g,n+1}(2)^\cC&\to&\cUU_{g,n}(2)^\cC&\to 1,
\end{array}\]
where the epimorphism $\hP_{g,n}\tura\hP_{g,n}^\cC$ factors through the epimorphism $\ccP_{g,n}^\cC\tura\hP_{g,n}^\cC$
(cf.\ (i) of Proposition~\ref{epi}). Proceeding by ascending and then descending induction on $n$, we see that the natural epimorphism 
$\hUU_{g,n}(2)\tura\hUU_{g,n}(2)^\cC$ factors through $\hat{\varrho}_{g,n}^\cC$ and an epimorphism $\cUU_{g,n}(2)^\cC\tura\hUU_{g,n}(2)^\cC$ 
both for $n\geq 2g+2$ and for $1\leq n\leq 2g+2$.
\smallskip

\noindent
(ii): The hypothesis $\ccP_{g,n}^\cC\equiv\hP_{g,n}^\cC$ for all $n\geq 0$ and (ii) of Lemma~\ref{final step}
immediately imply that $\cUU_{g,n}(2)^\cC$ is a pro-$\cC$ group for all $n\geq 0$. Therefore,
by the Birman exact sequence~(\ref{Birman2}), there is a commutative diagram with exact rows:
\[\begin{array}{ccccccc}
1\to&\hP_{g,n}^\cC&\to&\hUU_{g,n+1}(2)^\cC&\to&\hUU_{g,n}(2)^\cC&\to 1\,\\
&\|\,\,\,\,\,&&\da\,\,&&\,\,\da\,\,&\\
1\to&\hP_{g,n}^\cC&\to&\cUU_{g,n+1}(2)^\cC&\to&\cUU_{g,n}(2)^\cC&\to 1.
\end{array}\]
Proceeding by induction, we see that the natural epimorphism $\hUU_{g,n}(2)^\cC\tura\cUU_{g,n}(2)^\cC$ is an isomorphism for
both $n\geq 2g+2$ and $0\leq n\leq 2g+2$.
\end{proof}

\section{Orbits of simple elements in a pro-$\cC$ surface group}\label{orbits}
In this section, we will prove a geometric version of a conjecture by Gelander and Lubotzky. Let us recall that an element of a free group is \emph{primitive},
if it belongs to some minimal generating set. For $F$ a free group of finite rank, the conjecture states that the intersection with $F$ of the 
$\aut(\hat{F})$-orbit of a primitive element of $F$ (where we identify $F$ with a subgroup of its profinite completion $\hat{F}$) consists of primitive elements. 
Otherwise stated, the set of primitive elements of $F$ is closed in the profinite topology.
This conjecture has been proved by Puder and Parzanchevski (cf.\ Corollary~8.1 in \cite{PP}). 

Shortly after the publication of \cite{scc}, Pavel Zalesskii and I realized that (ii) of Theorem~4.3 and Proposition~4.1 of that paper imply a 
strong geometric version of the above conjecture for closed surface groups. For a surface group $\Pi=\pi_1(S,x)$ (where the surface $S$ is 
of finite type but not necessarily closed), the notion which roughly corresponds to that of primitive element is that of 
\emph{simple element}. An element of $\Pi$ is \emph{simple} if its free homotopy class contains a simple closed curve. 

An immediate consequence of the aforementioned results is then that, for any class of finite groups $\cC$, the set of simple elements of a 
closed surface group $\Pi$ is closed in the pro-$\cC$ topology on $\Pi$. In this section, this result will be extended to any connected surface $S$ of negative
Euler characteristic. Moreover, we will show that, for $\cC=\cF$ or $(p)$, the orbits of simple elements by $\cC$-congruence subgroups of $\GG(S,x)$ 
are also closed for the pro-$\cC$ topology on $\Pi$. 

\subsection{The pro-$\cC$ Reidemeister pairing}
In order to proceed, we need first to extend the results of Section~4 in \cite{BZ} to an arbitrary surface group. Let, as usual, $S$ be a surface of finite 
negative Euler characteristic and let $\Pi$ be its fundamental group. It is then clear how to define a pro-$\cC$ Reidemeister pairing:
\[{\mathfrak R}_\cC\co\hP^\cC\times\hP^\cC\to\Q_p[[\hP^\cC]],\]
which is continuous and invariant under the natural action by conjugation of the group $\hP^\cC$ on the domain.
We just need to follow step by step the definitions given in Section~4 of \cite{BZ}, where, for a normal $\cC$-open subgroup $K$ of $\hP^\cC$, 
we replace the homology group $H_1(K,\Q_p)$ with the homology group $H_1(\ol{S}_K,\Q_p)$ of the closed surface $\ol{S}_K$ associated to $K$ 
(cf.\ Section~\ref{linearization}). As in \cite{BZ}, Theorem~2.3 ibid.\ then implies:

\begin{theorem}\label{reidemeister}Let $\cC$ be a class of finite groups, $S$ a surface of negative Euler characteristic and let $\Pi=\pi_1(S,x)$
for some base point $x\in S$.
\begin{enumerate}
\item The free homotopy classes of a pair of elements $\alpha,\beta\in\Pi$ contain disjoint representatives
if and only if  ${\mathfrak R}_\cC(\alpha,\beta)=0$.
\item The free homotopy class of a nonpower element $\gm\in\Pi$ contains a
simple closed curve if and only if ${\mathfrak R}_\cC(\gm,\gm)=0$.
\end{enumerate}
\end{theorem}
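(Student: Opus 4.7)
The plan is to mirror, level by level, the proof strategy of the analogous result in \cite{BZ} for closed surface groups, with the closed surface of that paper replaced by the compactification $\ol S_K$ of the $\cC$-cover $S_K\to S$ obtained by filling in the punctures. First I would unpack the definition of $\mathfrak R_\cC$ given in the paragraph preceding the statement: it is assembled from a compatible system of finite-level pairings $\mathfrak R_K(\alpha,\beta)\in\Q_p[\hP^\cC/K]$, one for each $\cC$-open characteristic $K\lhd\Pi$, each built from the symplectic intersection form on $H_1(\ol S_K,\Q_p)$ evaluated on cycles representing the lifts of $\alpha$ and $\beta$ to $S_K\subset\ol S_K$. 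From this inverse-limit structure, $\mathfrak R_\cC(\alpha,\beta)=0$ if and only if $\mathfrak R_K(\alpha,\beta)=0$ for every such $K$.

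The geometric content then proceeds exactly as in \cite{BZ}: Theorem~2.3 of that paper identifies the vanishing of the (classical) Reidemeister pairing on a closed surface with the existence of disjoint (respectively simple) representatives of the given free homotopy classes. Applied level by level, this yields the corresponding conclusion on $\ol S_K$: the lifts of $\alpha$ and $\beta$ (resp.\ the lift of the nonpower element $\gm$) admit disjoint (resp.\ simple closed) representatives on $\ol S_K$. The nonpower hypothesis in (ii) plays the same role as in \cite{BZ}, ruling out the degenerate situation in which a self-intersection is concealed inside a power. Descent from a cofinal family of such covers back to $S$ itself follows from the standard residual-finiteness argument, using conjugacy $\cC$-separability of $\Pi$ (cf.\ \cite{Paris} or Theorem~4.3 in \cite{scc}) together with the elementary observation that a disjoint or simple representative on $S$ automatically lifts to one on each cover, giving the easy direction.

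The main obstacle is the one point at which the open-surface case genuinely differs from the closed case of \cite{BZ}: a priori, loops on $S_K$ that meet only inside small punctured neighborhoods could become disjoint after filling in the punctures, so the intersection form on $H_1(\ol S_K,\Q_p)$ could in principle miss essential crossings on the open surface. I would deal with this by noting that the cycles used in defining $\mathfrak R_K$ can always be pushed off any prescribed finite subset of $\ol S_K$, and by choosing $K$ deep enough so that each puncture of $S_K$ is encircled by loops whose classes are already detected at a larger quotient; no intersection present on $S_K$ is then lost under compactification. With this adjustment in place, the arguments of Section~4 of \cite{BZ} and the appeal to Theorem~2.3 ibid.\ transfer verbatim to the pro-$\cC$ Reidemeister pairing constructed here, which yields both items simultaneously.
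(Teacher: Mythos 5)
Your identification of the ingredients is right: the pairing is assembled from the intersection forms on the homology groups $H_1(\ol{S}_K,\Q_p)$, the only change relative to \cite{BZ} being the substitution of $H_1(\ol{S}_K,\Q_p)$ for $H_1(K,\Q_p)$, and Theorem~2.3 of \cite{BZ} is the geometric input. But the way you deploy that theorem is not how the argument goes, and the route you describe does not close. You propose to apply a closed-surface statement to each compactified cover $\ol{S}_K$ separately, conclude that the \emph{lifts} of $\alpha$ and $\beta$ admit disjoint representatives on $\ol{S}_K$, and then ``descend'' to $S$ by a residual-finiteness/conjugacy-separability argument. Neither half of this works. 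First, to invoke a closed-surface criterion on $\ol{S}_K$ you would need the vanishing of the Reidemeister pairing of the lifts computed over the finite covers of $\ol{S}_K$, and these are not the surfaces $\ol{S}_{K'}$ for $K'\leq K$ (the compactification of a cover of $S_K$ is in general only a \emph{ramified} cover of $\ol{S}_K$), so the hypothesis you would need at each level is not what the vanishing of ${\mathfrak R}_\cC(\alpha,\beta)$ supplies. Second, and more seriously, disjointness of the lifts on the closed surfaces $\ol{S}_K$ does not descend to disjointness on the open surface $S$: at any fixed level an essential intersection on $S_K$ may disappear after the punctures are filled in, which is exactly the phenomenon you flag as the ``main obstacle''. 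There is no standard descent here, and conjugacy $\cC$-separability plays no role in that implication.

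The correct deployment, and the one the paper intends, is that Theorem~2.3 of \cite{BZ} (the geometric characterization coming from \cite{scc}) is already a statement about a finite-type surface $S$: its hypothesis is precisely the vanishing of the homological intersection numbers of the lifts in \emph{all} the compactified covers $\ol{S}_K$, ranging over a cofinal tower, and its conclusion is disjointness (resp.\ simplicity, under the nonpower hypothesis) on $S$ itself. Once ${\mathfrak R}_\cC$ is defined so that ${\mathfrak R}_\cC(\alpha,\beta)=0$ encodes exactly this system of vanishings, both items follow in one step, with no level-by-level application and no descent. Correspondingly, your resolution of the ``main obstacle'' --- pushing cycles off a finite subset and choosing $K$ deep enough --- is not an argument: pushing a cycle off a point does not alter its homology class, and the assertion that no essential intersection is lost in the limit over all $K$ is precisely the nontrivial content of the theorem being cited; it has to be quoted as such, not rederived informally.
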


\subsection{The set of simple elements is closed in the pro-$\cC$ topology}
In order to extend Proposition~4.1 \cite{BZ} to an arbitrary surface group $\Pi$, we have to restrict the group of automorphisms of $\hP^\cC$ to
those which are meaningful from a geometric point of view:

\begin{definition}\label{geo_auto}For every characteristic $\cC$-open subgroup $K$ of $\hP^\cC$, let $N_K$ be the kernel of the natural epimorphism 
$\hp_1(S_K,\td{x})^\cC\to\hp_1(\ol{S}_K,\td{x})^\cC$, where $\td{x}\in S_K$ is a base point lying over the base point $x\in S$ of $\Pi$ and we identify $K$ 
with the pro-$\cC$ completion $\hp_1(S_K,\td{x})^\cC$ of the fundamental group $\pi_1(S_K,\td{x})$. We say that an automorphism $f\in\aut(\hP^\cC)$ 
is \emph{coherent} if, for all subgroups $K$ of $\hP^\cC$ as above, $f$ preserves the subgroup $N_K$ (so that it induces 
an automorphism $\bar f_K$ of the quotient group $\hp_1(\ol{S}_K,\td{x})^\cC$). Let us then denote by $\aut^\mathrm{coh}(\hP^\cC)$ the subgroup of 
$\aut(\hP^\cC)$ consisting of coherent automorphisms. 
\end{definition}

\begin{remark}\label{subgroups}
Observe that $\aut^\mathrm{coh}(\hP^\cC)$ is a closed subgroup of $\aut(\hP^\cC)$ which contains the image of the pro-$\cC$ congruence 
mapping class group $\cGG(S,x)^\cC$ under the natural representation.
Moreover, if we identify $\hP^\cC$ with the geometric pro-$\cC$ algebraic fundamental group of a smooth projective curve $C$ defined over $\Q$ endowed 
with a rational point $x\in C$, $\aut^\mathrm{coh}(\hP^\cC)$ also contains the image of the absolute Galois group $G_\Q$ under the associated
Galois representation.
\end{remark}

The following, more familiar, characterization of coherent automorphisms is possibly of independent interest:

\begin{proposition}\label{coherent}Let $S=S_{g,n}$, for $2g-2+n>0$, and denote by $u_i\in\Pi$ a simple loop around the puncture on $S$
obtained removing $P_i$ from $S_g$, for $i=1,\ldots,n$. Then we have:
\[\aut^\mathrm{coh}(\hP^\cC)=\{f\in\aut(\hP^\cC)|\, f(u_i)\sim u_{\sg(i)}^{s_i}\mbox{ for some }\sg\in\Sg_n\mbox{ and }  s_i\in(\ZZ^\cC)^\ast, i=1,\ldots,n\}.\]
\end{proposition}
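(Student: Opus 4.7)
The plan is to prove the two inclusions separately, working throughout with the intrinsic description of $N_K$ as the closed normal subgroup of $\hP^\cC$ generated by the family $\{u_i^{m_i(K)}\}_{i=1}^{n}$, where $m_i(K)$ is the smallest positive integer with $u_i^{m_i(K)}\in K$ (equivalently, the ramification index at the $i$-th cusp in the covering $S_K\to S$). This follows because the loops around the punctures of $S_K$ lying over the $i$-th puncture of $S$ are $\hP^\cC$-conjugates of $u_i^{m_i(K)}$, and $N_K$ is the kernel of $\hp_1(S_K)^\cC\tura\hp_1(\ol S_K)^\cC$.

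For the inclusion $\supseteq$, suppose $f(u_i)$ is $\hP^\cC$-conjugate to $u_{\sigma(i)}^{s_i}$ for a permutation $\sigma\in\Sigma_n$ and units $s_i\in(\ZZ^\cC)^\ast$. A short argument using Lemma~\ref{central} (self-normalization of $\overline{\langle u_i\rangle}$) shows that the collection of such automorphisms is closed under inversion, so $f^{-1}$ is of the same form. Writing $f(u_i)=g_i u_{\sigma(i)}^{s_i}g_i^{-1}$, we have $f(u_i^{m_i(K)})=g_i u_{\sigma(i)}^{s_i m_i(K)}g_i^{-1}$; since $K$ is characteristic ($f(K)=K$) this element lies in $K$, forcing $m_{\sigma(i)}(K)\mid s_i m_i(K)$ in $\ZZ^\cC$, and hence $f(u_i^{m_i(K)})$ is a $\hP^\cC$-conjugate of a power of $u_{\sigma(i)}^{m_{\sigma(i)}(K)}$ and so lies in $N_K$. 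Therefore $f(N_K)\subseteq N_K$, and applying the same argument to $f^{-1}$ yields equality.

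For the inclusion $\subseteq$, the strategy is to give an intrinsic, group-theoretic characterization of the $\hP^\cC$-conjugacy classes of the procyclic subgroups $\overline{\langle u_i\rangle}$ that is manifestly preserved by coherent automorphisms. I claim these are exactly the $\hP^\cC$-conjugacy classes of \emph{maximal} closed procyclic subgroups $C\le\hP^\cC$ satisfying $C\cap K\subseteq N_K$ for every characteristic $\cC$-open $K\lhd_\cC\hP^\cC$; here maximality of $\overline{\langle u_i\rangle}$ as a procyclic subgroup follows from Lemma~\ref{central}. The ``only if'' direction of the characterization is immediate from the description of $N_K$ above. Granted the characterization, a coherent $f$ permutes the finite set of such conjugacy classes (yielding the permutation $\sigma\in\Sigma_n$) and restricts to an isomorphism $\overline{\langle u_i\rangle}\cong\ZZ^\cC$ onto a $\hP^\cC$-conjugate of $\overline{\langle u_{\sigma(i)}\rangle}$, which must send $u_i$ to a unit multiple of $u_{\sigma(i)}$ and hence forces $s_i\in(\ZZ^\cC)^\ast$.

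The main obstacle is the ``if'' direction of the intrinsic characterization: one must prove that any maximal closed procyclic $C=\overline{\langle\gamma\rangle}\le\hP^\cC$ with $C\cap K\subseteq N_K$ for every characteristic $\cC$-open $K$ is $\hP^\cC$-conjugate to some $\overline{\langle u_i\rangle}$. The approach is to analyze $\gamma$ through the tower of pro-$\cC$ closed-surface quotients $\{K/N_K=\hp_1(\ol S_K)^\cC\}_K$: the hypothesis forces every power of $\gamma$ lying in $K$ to vanish in $\hp_1(\ol S_K)^\cC$. Combining Lemma~\ref{subsurface} with the conjugacy $\cC$-separability of $\Pi$ (invoked in Section~\ref{the complex}), one reduces to the classical topological fact that an essential isotopy class on $S$ whose image is trivial in every finite unramified closed covering of $S$ is peripheral; Lemma~\ref{central} together with maximality then upgrades ``$\gamma$ is a power of a conjugate of some $u_i$'' to ``$C$ is a conjugate of $\overline{\langle u_i\rangle}$''.
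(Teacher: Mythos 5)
Your overall architecture is reasonable, and the easy inclusion $\supseteq$ is essentially complete: the identification of $N_K$ with the closed normal closure of $\{u_i^{m_i(K)}\}_{i=1}^n$, the divisibility argument giving $f(N_K)\subseteq N_K$, and the observation (via Lemma~\ref{central}) that the right-hand side of the asserted equality is closed under inversion are all correct. But note that the paper proves this proposition in one line, as a consequence of Theorem~1.5 of \cite{BZ}, and your treatment of the inclusion $\subseteq$ has a genuine gap at exactly the point where that external theorem is doing the work.

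The gap is the ``if'' direction of your intrinsic characterization: that a maximal procyclic subgroup $C=\overline{\langle\gamma\rangle}$ of $\hP^\cC$ with $C\cap K\subseteq N_K$ for every characteristic $\cC$-open $K$ must be $\hP^\cC$-conjugate to some $\overline{\langle u_i\rangle}$. Here $\gamma$ is an arbitrary element of the pro-$\cC$ completion, not of $\Pi$, so it has no ``essential isotopy class'', and conjugacy $\cC$-separability does not help: it only says that the map $\Pi/\!\sim\;\to\hP^\cC/\!\sim$ is injective, not that a profinite element satisfying a family of closed conditions is conjugate to a discrete one (most elements of $\hP^\cC$ are not conjugate to any element of $\Pi$). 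Approximating $\gamma$ by elements of $\Pi$ also fails, since an approximant only satisfies finitely many of the constraints ``$\gamma^{m_K}\in N_K$'', and only approximately, so the classical topological fact about peripheral isotopy classes cannot be applied to it. This passage from ``vanishes in every closed-surface quotient $\hp_1(\ol{S}_K)^\cC$'' to ``peripheral'', for genuinely profinite elements, is precisely the highly nontrivial content of the cited Theorem~1.5 of \cite{BZ}, which is proved there with the homology-of-coverings and Reidemeister-pairing machinery (cf.\ also \cite{scc}); it cannot be extracted from Lemma~\ref{subsurface} and separability alone. So your argument is not self-contained where it needs to be: you should either cite that theorem directly (as the paper does) or supply a proof of it.
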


\begin{proof}This is a highly nontrivial but immediate consequence of Theorem~1.5 in \cite{BZ}.
\end{proof}

The same proof (word by word) of Proposition~4.1 in \cite{BZ} then gives:

\begin{proposition}\label{character}There is a character $\chi_p\co\aut^\mathrm{coh}(\hP^\cC)\to\Z_p^\ast$ such that, for all $f\in\aut^\mathrm{coh}(\hP^\cC)$ 
and $\alpha,\beta\in\hP^\cC$, we have  ${\mathfrak R}_\cC(f(\alpha),f(\beta))=\chi_p(f)\cdot{\mathfrak R}_\cC(\alpha,\beta)$.
For $f\in\Inn(\hP^\cC)$, we have $\chi_p(f)=1$ and so the character $\chi_p$ descends to a character $\bar\chi_p\co\out^\mathrm{coh}(\hP^\cC)\to\Z_p^\ast$.
\end{proposition}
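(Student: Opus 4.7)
The plan is to transcribe the argument of Proposition~4.1 in \cite{BZ} into the pro-$\cC$ setting; the paper itself flags that this goes through word by word, and our task is to identify the ingredients and explain why they still work. Recall how ${\mathfrak R}_\cC$ was assembled in Theorem~\ref{reidemeister}: for each characteristic $\cC$-open subgroup $K\lhd\hP^\cC$, one uses the intersection pairing on $H_1(\ol S_K,\Q_p)$ together with canonical lifts of elements of $\hP^\cC$ to the closed cover $\ol S_K$, and packages these as $K$ varies. Consequently, a scaling law for ${\mathfrak R}_\cC$ will follow from a compatible scaling law on each $H_1(\ol S_K,\Q_p)$.

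First, fix $f\in\aut^\mathrm{coh}(\hP^\cC)$. By coherence (Definition~\ref{geo_auto}), $f$ preserves $K$ (characteristic) and $N_K$, so it descends to an automorphism $\bar f_K$ of $\hp_1(\ol S_K,\tilde x)^\cC$ commuting with the deck transformation group $G_K=\hP^\cC/K$. Abelianization and extension of scalars yields a $\Q_p[G_K]$-linear automorphism $F_K$ of $H_1(\ol S_K,\Q_p)$. The core claim is that $F_K$ scales the symplectic intersection form $\langle\,\cdot\,,\,\cdot\,\rangle_{\ol S_K}$ by a scalar $c_K\in\Z_p^\ast$, and that $c_K$ is independent of $K$; this scalar will be $\chi_p(f)$.

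Existence of $c_K$ follows by considering the induced action on the top exterior power $\wedge^{2g_K}H_1(\ol S_K,\Q_p)$, which is one-dimensional, together with the fact (coming from Poincar\'e duality on the genuine closed Riemann surface $\ol S_K$) that an intertwining of the natural self-duality determined by the intersection form must rescale that form by some $c_K\in\Q_p^\ast$; preservation of the integral lattice $H_1(\ol S_K,\Z_p)$ forces $c_K\in\Z_p^\ast$. Independence of $K$ comes from the compatibility of the transfer/restriction maps between $H_1(\ol S_{K'},\Q_p)$ and $H_1(\ol S_K,\Q_p)$ (for $K'\subseteq K$) with the actions $F_{K'}$ and $F_K$, together with the fact that transfer multiplies the intersection pairings by the degree of the cover; the two constants must therefore agree, giving a single $\chi_p(f)\in\Z_p^\ast$. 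Multiplicativity $\chi_p(f_1 f_2)=\chi_p(f_1)\chi_p(f_2)$ is immediate from the characterization by rescaling.

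Once this is in place, the formula ${\mathfrak R}_\cC(f(\alpha),f(\beta))=\chi_p(f)\cdot{\mathfrak R}_\cC(\alpha,\beta)$ follows by inspecting the construction of ${\mathfrak R}_\cC$: it is built from intersection numbers in $\ol S_K$ of cycles associated to $\alpha,\beta$, and these cycles transform under $f$ exactly via $F_K$. For $f=\inn(g)$ with $g\in\hP^\cC$, the induced map $F_K$ is the action of a deck transformation on $H_1(\ol S_K,\Q_p)$, which is orientation-preserving and thus preserves $\langle\,\cdot\,,\,\cdot\,\rangle_{\ol S_K}$ identically; hence $\chi_p(\inn(g))=1$, so $\chi_p$ descends to $\bar\chi_p\co\out^\mathrm{coh}(\hP^\cC)\to\Z_p^\ast$. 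The one place where genuine content is hidden is the uniqueness-up-to-scalar of the symplectic form under $G_K$-equivariant automorphisms coming from coherent lifts; in the classical setting this is the content of the cohomological part of \cite{BZ}, and in the pro-$\cC$ setting nothing changes, because $H_1(\ol S_K,\Q_p)$ is still the $\Q_p$-homology of an honest closed Riemann surface.
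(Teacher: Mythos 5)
Your proposal is correct and follows essentially the same route as the paper, which simply invokes the proof of Proposition~4.1 in \cite{BZ}: reduce to the closed covers $\ol{S}_K$ via coherence, show the induced map $F_K$ on $H_1(\ol{S}_K,\Q_p)$ rescales the intersection form by a unit $c_K$ independent of $K$, and observe that deck transformations preserve the form. One small repair: the top exterior power only controls $\det F_K$ and does not by itself yield the rescaling; the correct justification is that $H_2(\ol{S}_K,\Z_p)$ (equivalently $H^2$ of the pro-$\cC$ surface group $K/N_K$) has rank one, so $\bar f_K$ multiplies the fundamental class by some $c_K\in\Z_p^\ast$, and naturality of cap/cup product then forces $\langle F_K a,F_K b\rangle=c_K\langle a,b\rangle$ --- this is the ``intertwining of the self-duality'' you assert, and it is exactly the cohomological input of \cite{BZ}. (Also, $\bar f_K$ is only semilinear over the induced automorphism of $G_K$, not $\Q_p[G_K]$-linear, but this plays no role in the argument.)
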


Let us observe that, by the various universal properties involved, we can extend the pro-$\cC$ Reidemeister pairing to a bilinear pairing: 
$\Q_p[[\hP^\cC]]\otimes\Q_p[[\hP^\cC]]\to\Q_p[[\hP^\cC]]$, which is also continuous and invariant under the conjugacy action of $\hP^\cC$ 
on the domain. 

On the $\Q_p$-vector space $\Q_p[[\hP^\cC]]$, there is a natural
$\aut^\mathrm{coh}(\hP^\cC)$-module structure induced by the continuous action of $\aut^\mathrm{coh}(\hP^\cC)$ on $\hP^\cC$.
We let instead $\Q_p[[\hP^\cC]](1)$ be the $\aut^\mathrm{coh}(\hP^\cC)$-module defined letting the group $\aut^\mathrm{coh}(\hP^\cC)$ act on
the $\Q_p$-vector space $\Q_p[[\hP^\cC]]$ through the character $\chi_p$. By Proposition~\ref{character}, with these definitions, the bilinear 
pairing just defined gives rise to an $\aut^\mathrm{coh}(\hP^\cC)$-equivariant bilinear pairing:
\begin{equation}\label{bilinear_pairing}
{\mathfrak R}_\cC(\_,\_)\co\Q_p[[\hP^\cC]]\otimes\Q_p[[\hP^\cC]]\to\Q_p[[\hP^\cC]](1),
\end{equation}
which we call the \emph{pro-$\cC$ Reidemeister bilinear pairing}. Let us observe that, for $\cC=(p)$, if we give $\Q_p[[\hP^{(p)}]]$ the canonical
(pro) mixed Hodge structure defined by Hain in \cite{H1} and to $\Q_p[[\hP^\cC]](1)$ the Hodge structure of weight $-2$ obtained
tensoring the trivial Hodge structure on $\Q_p[[\hP^{(p)}]]$ by a Tate twist, the bilinear pairing ${\mathfrak R}_p(\_,\_)$ becomes a morphism of
(pro) mixed Hodge structures. These facts should be compared with Theorem~1 and Question~1.2 in \cite{H2} (cf.\ also Section~\ref{open}).

Proposition~\ref{character} and Theorem~\ref{reidemeister} then immediately imply:

\begin{corollary}\label{closed_orbit}For $\gm\in\Pi$ a simple element, we have 
\[\aut^\mathrm{coh}(\hP^\cC)\cdot\gm\cap\Pi=\GG(S,x)\cdot\gm,\] where we identify $\Pi$ with a subgroup of its pro-$\cC$ completion $\hP^\cC$.
In particular, the set of simple elements of $\Pi$ is closed in the pro-$\cC$ topology of $\Pi$.
\end{corollary}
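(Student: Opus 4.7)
The plan is to combine the pro-$\cC$ Reidemeister pairing ${\mathfrak R}_\cC$ with its $\chi_p$-equivariance under $\aut^\mathrm{coh}(\hP^\cC)$ from Proposition~\ref{character} and the geometric characterization of simplicity in Theorem~\ref{reidemeister}(ii).

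The easy inclusion $\GG(S,x)\cdot\gm\subseteq\aut^\mathrm{coh}(\hP^\cC)\cdot\gm\cap\Pi$ follows from Remark~\ref{subgroups}, which places the image of $\GG(S,x)$ in $\aut(\hP^\cC)$ inside $\aut^\mathrm{coh}(\hP^\cC)$, together with the obvious fact that $\GG(S,x)$ preserves $\Pi$.

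For the reverse inclusion, suppose $f\in\aut^\mathrm{coh}(\hP^\cC)$ satisfies $f(\gm)\in\Pi$. Since $\gm$ is simple it is in particular a non-power, and Theorem~\ref{reidemeister}(ii) yields ${\mathfrak R}_\cC(\gm,\gm)=0$. By Proposition~\ref{character},
\[{\mathfrak R}_\cC(f(\gm),f(\gm))=\chi_p(f)\cdot{\mathfrak R}_\cC(\gm,\gm)=0.\]
As $f$ is an automorphism of $\hP^\cC$, the element $f(\gm)$ is a non-power in $\hP^\cC$, hence also in $\Pi$ (via $\Pi\hookra\hP^\cC$). Applying Theorem~\ref{reidemeister}(ii) in the converse direction to the non-power $f(\gm)\in\Pi$ then shows that the free homotopy class of $f(\gm)$ contains a simple closed curve, i.e.\ $f(\gm)$ is simple. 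To finish, one has to promote ``$f(\gm)$ is simple'' to ``$f(\gm)$ lies in $\GG(S,x)\cdot\gm$'': concretely, to verify that $\gm$ and $f(\gm)$ represent simple closed curves of the same topological $\GG(S)$-type, after which the standard change-of-coordinates principle supplies a $\phi\in\GG(S,x)$ with $\phi_\ast(\gm)=f(\gm)$. I expect this topological-type matching to be the main obstacle. I would extract it from coherence of $f$, on the one hand via Proposition~\ref{coherent} (which controls the action of $f$ on peripheral conjugacy classes), and on the other hand via the injection $\widehat{\Psi}_p$ of Theorem~\ref{embedding}, through which the topological type of a pro-$\cC$ simple closed curve is encoded by the $G_K$-submodule $V_{K,\sg}\subseteq H_1(\ol{S}_K,\Q_p)$—a datum preserved by coherent automorphisms.

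For the second assertion, closedness of the set of simple elements in the pro-$\cC$ topology on $\Pi$, the key is that continuity of ${\mathfrak R}_\cC$ on $\hP^\cC\times\hP^\cC$ makes the vanishing locus $Z:=\{\alpha\in\hP^\cC:{\mathfrak R}_\cC(\alpha,\alpha)=0\}$ closed. By Theorem~\ref{reidemeister}(ii), on non-power elements of $\Pi$ membership in $Z$ coincides with simplicity; and for any proper power $\delta^k\in\Pi$ with $\delta\neq 1$ and $k\geq 2$, the self-pairing ${\mathfrak R}_\cC(\delta^k,\delta^k)$ is non-zero, as a direct consequence of the definition of the Reidemeister pairing reflecting the unavoidable self-intersections of a multiply-wrapped curve. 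Together these identify the simple elements of $\Pi$ with $(Z\setminus\{1\})\cap\Pi$, a closed subset in the subspace pro-$\cC$ topology on $\Pi$.
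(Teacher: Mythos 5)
Your treatment of the main equality follows the paper's route: the paper deduces Corollary~\ref{closed_orbit} precisely from the $\chi_p$-equivariance of ${\mathfrak R}_\cC$ (Proposition~\ref{character}) together with the criterion of Theorem~\ref{reidemeister}(ii), exactly as in your second paragraph. You are right that this only yields that $f(\gm)$ is \emph{simple}, and that upgrading this to membership in the single orbit $\GG(S,x)\cdot\gm$ requires matching topological types; the paper passes over this silently, and your proposed fix (coherence controls peripheral classes via Proposition~\ref{coherent}, and the type of the complement is encoded in the submodules $V_{K,\sg}$ through the injection of Theorem~\ref{embedding}) is the right mechanism, though as written it remains a sketch rather than an argument. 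That part I would accept as ``same proof, with an honest flag on a step the paper also elides.''

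The genuine gap is in your proof of the second assertion. You identify the simple elements of $\Pi$ with $(Z\ssm\{1\})\cap\Pi$, where $Z$ is the vanishing locus of the self-pairing, and for this you need that ${\mathfrak R}_\cC(\delta^k,\delta^k)\neq 0$ for \emph{every} proper power. Nothing in the paper supports this, and it is almost certainly false: Theorem~\ref{reidemeister}(ii) is deliberately restricted to nonpower elements, and for $\delta$ itself simple the class $\delta^k$ is carried by a curve wrapping $k$ times around an embedded circle, so a self-intersection--type invariant in the spirit of the Turaev cobracket (which is what the Reidemeister self-pairing is) vanishes on it. Since such $\delta^k$ are not simple, your set $(Z\ssm\{1\})\cap\Pi$ would then strictly contain the simple elements and the closedness argument collapses. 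The intended deduction --- and the reason the statement is an ``in particular'' of the displayed equality --- is different: the simple elements of $\Pi$ form finitely many $\GG(S,x)$-orbits (one per topological type of complement); by the first part each such orbit equals $\aut^\mathrm{coh}(\hP^\cC)\cdot\gm_i\cap\Pi$; the orbit $\aut^\mathrm{coh}(\hP^\cC)\cdot\gm_i$ is the continuous image of the compact group $\aut^\mathrm{coh}(\hP^\cC)$ (closed in $\aut(\hP^\cC)$ by Remark~\ref{subgroups}), hence compact and closed in $\hP^\cC$; so each orbit is closed in the subspace (i.e.\ pro-$\cC$) topology of $\Pi$, and a finite union of closed sets is closed. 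You should replace your vanishing-locus argument by this one.
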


\subsection{The $\cC$-congruence orbit of a simple element ($\ccP^\cC\equiv\hP^\cC$)}
In this section, we assume that $\ccP^\cC\equiv\hP^\cC$ (e.g.\ $\cC=\cF$ or $\cC=(p)$). In particular, by Corollary~\ref{faithful} and 
Corollary~\ref{compconf}, the $\cC$-congruence topology on the mapping class group $\GG(S,x)$ is defined unambigously. We then have:

\begin{corollary}\label{closed_orbit2}Let $\cC$ be a class of finite groups such that $\ccP^\cC\equiv\hP^\cC$ (e.g.\ $\cC=\cF$ or $\cC=(p)$) and let $U$ 
be an open subgroup of $\cGG(S,x)^\cC$ and $\gm\in\Pi$  a simple element. Then: 
\[U\cdot\gm\cap\Pi=(U\cap\GG(S,x))\cdot\gm,\] 
where we identify $\GG(S,x)$ with a subgroup of $\cGG(S,x)^\cC$. In particular, orbits of simple elements by $\cC$-congruence open subgroups of 
$\GG(S,x)$ are closed in the pro-$\cC$ topology of $\Pi$.
\end{corollary}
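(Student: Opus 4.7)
The inclusion $(U\cap\GG(S,x))\cdot\gm\subseteq U\cdot\gm\cap\Pi$ is immediate, since $\GG(S,x)$ preserves $\Pi$ inside $\hP^\cC$. For the reverse inclusion, let $u=f\cdot\gm\in\Pi$ with $f\in U$. By Remark~\ref{subgroups}, $U\subseteq\cGG(S,x)^\cC$ maps into $\aut^\mathrm{coh}(\hP^\cC)$, so Corollary~\ref{closed_orbit} furnishes some $h\in\GG(S,x)$ with $h\cdot\gm=u$; then $s_0:=h^{-1}f$ lies in $\mathrm{Stab}_{\cGG(S,x)^\cC}(\gm)$. The crux is to establish that this stabilizer coincides with the closure in $\cGG(S,x)^\cC$ of the discrete stabilizer $\mathrm{Stab}_{\GG(S,x)}(\gm)$; granted this, the openness of $U$ yields $s\in\mathrm{Stab}_{\GG(S,x)}(\gm)$ with $hs\in U$, and then $v:=hs\in U\cap\GG(S,x)$ satisfies $v\cdot\gm=h\cdot\gm=u$.

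To prove the density of discrete stabilizers, the plan is to exploit the hypothesis $\ccP^\cC\equiv\hP^\cC$ together with Corollary~\ref{faithful} to identify $\cGG(S,x)^\cC\cong\cGG(S_x)^\cC$ acting by conjugation on its normal subgroup $\hP^\cC$ via the Birman exact sequence~(\ref{Birman2}). Under this identification, the image of $\gm\in\Pi$ in $\GG(S_x)$ is the bounding pair map $\tau_{\gm^+}\tau_{\gm^-}^{-1}$ recalled in the proof of Corollary~\ref{faithful}, so stabilization of $\gm$ amounts to centralization of this multitwist. Corollary~\ref{centralizers multitwists} identifies $Z_{\cGG(S_x)^\cC}(\tau_{\gm^+}\tau_{\gm^-}^{-1})$ with the stabilizer $\cGG(S_x)^\cC_\sg$ of the multicurve $\sg:=\{\gm^+,\gm^-\}$, and the density observation following Theorem~\ref{isomorphism} (via Proposition~6.5 of \cite{PFT}) realizes this stabilizer as the closure of the discrete stabilizer $\GG(S_x)_\sg$. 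The classical discrete counterpart of Corollary~\ref{centralizers multitwists} finally identifies $\GG(S_x)_\sg$ with $Z_{\GG(S_x)}(\tau_{\gm^+}\tau_{\gm^-}^{-1})=\mathrm{Stab}_{\GG(S,x)}(\gm)$, completing the density argument.

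The main obstacle is this chain of identifications: propagating the multicurve-based structural results to the centralizer of a based simple loop, while keeping careful track of orientation conventions (in particular the distinction between $\cGG^\cC_\sg$ and $\cGG^\cC_{\vec\sg}$ controlled by Theorem~\ref{stab pro-curves}). Once the density claim is in hand, the rest is a routine combination of the openness of $U$, the density of $\GG(S,x)$ in $\cGG(S,x)^\cC$, and Corollary~\ref{closed_orbit}; the ``in particular'' statement then follows by taking $U$ to be the preimage in $\cGG(S,x)^\cC$ of a given $\cC$-congruence open subgroup of $\GG(S,x)$.
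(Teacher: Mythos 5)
Your proposal is correct and follows essentially the same route as the paper: reduce via Corollary~\ref{closed_orbit} to showing that the $\cGG(S,x)^\cC$-stabilizer of $\gm$ is the closure of its $\GG(S,x)$-stabilizer, identify both stabilizers with centralizers of the bounding pair map $\tau_{\gm^+}\tau_{\gm^-}^{-1}$, and conclude with Corollary~\ref{centralizers multitwists} together with the description of stabilizers of (pro-$\cC$) multicurves as closures of discrete stabilizers. You merely make explicit the topological bookkeeping (the coset argument with the open subgroup $U$) that the paper leaves implicit.
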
 

\begin{proof}By Corollary~\ref{closed_orbit}, in order to prove Corollary~\ref{closed_orbit2}, it is enough to show that the stabilizer $\cGG^\cC_\gm$ of a 
simple element $\gm\in\Pi\subset\hP^\cC$ for the action of the pro-$\cC$ congruence mapping class group $\cGG(S,x)^\cC$ on $\hP^\cC$ is the closure 
of the stabilizer $\GG_\gm$ of $\gm$ for the action of $\GG(S,x)$ on $\Pi$. These stabilizer are, respectively, the centralizers in $\cGG(S,x)^\cC$ and 
$\GG(S,x)$ of the bounding pair map determined by $\gm$ on $S$. The claim then follows from Corollary~\ref{centralizers multitwists} and 
Theorem~\ref{stab pro-curves}.
\end{proof}

\subsection{Some open questions}\label{open}
In Corollary~\ref{closed_orbit}, we have seen that the intersection of the $\aut^\mathrm{coh}(\hP^\cC)$-orbit in $\hP^\cC$ of a simple element $\gm\in\Pi$ 
with $\Pi$ coincides with the $\GG(S,x)$-orbit of $\gm$. A natural question is then the following:

\begin{question}\label{orbit_closure}Does the closure in $\hP^\cC$ of the $\GG(S,x)$-orbit of a simple element $\gm\in\Pi$ coincides with the 
$\aut^\mathrm{coh}(\hP^\cC)$-orbit in $\hP^\cC$ of the same element?
\end{question}

An affirmative answer to this question would put strong restrictions on the profinite group $\aut^\mathrm{coh}(\hP^\cC)$ and I believe that it may
have important implications for Grothendieck-Teichm\"uller theory. Related to Question~\ref{orbit_closure} is the problem of characterizing the set of 
simple elements in $\hP^\cC$, which we define to be the closure in $\hP^\cC$ of the set of simple elements of $\Pi$.

Let us observe that the simple elements of $\hP^\cC$ fall in a finite number of $\cGG(S,x)^\cC$-orbits which are just the closures of the $\GG(S,x)$-orbits 
of simple elements in $\Pi$. In Theorem~4.2 of \cite{faith}, we showed that a procyclic subgroup of $\hP^\cC$ contains at most a simple element 
(the theorem is proved for $\cC=\cF$, but, as usual, it is not difficult to see that the same argument applies to any class of finite groups). 
We may then ask the question:

\begin{question}\label{orbit_closure2}
Is the set of simple elements of $\hP^\cC$ characterized by the property that a procyclic subgroup $C$ of $\hP^\cC$ 
contains a (unique) simple element, if and only if, this subgroup is primitive (i.e.\ it is not properly contained in some other
procyclic subgroup of $\hP^\cC$) and ${\mathfrak R}_\cC(\gm,\gm)=0$ for some (and then all) $\gm\in C$? 
\end{question}

I think that in order to advance in all the above questions it would be useful to understand the relation of the $p$-adic Reidemeister bilinear pairing
${\mathfrak R}_p(\_,\_)$ with the $p$-adic Goldman bracket introduced by Hain in \cite{H2} (cf.\ Section~1.2 ibid.).
It would also be interesting to see whether the Goldman bracket can be defined  for any class of finite groups $\cC$ (the case $\cC=\cF$ would be
particularly interesting). My impression is that, while the $p$-adic (or pro-$\cF$, if any) Goldman bracket is, from a technical point of view, 
more difficult than the $p$-adic (or pro-$\cF$) Reidemeister bilinear pairing, it also encodes more information.

\bigskip

\noindent Marco Boggi,\\ Departamento de Matem\'atica, UFMG, \\
Av. Ant\^onio Carlos, 6627 - Caixa Postal 702 \\ 
CEP 31270-901 - Belo Horizonte - MG, Brasil.
\\
E--mail:\,\,\, marco.boggi@gmail.com

\end{document}